\documentclass{article}
\usepackage{amssymb,pstricks}
\usepackage{latexsym}
\usepackage{amsmath}
\usepackage{amsthm}

\topskip  \theoremstyle{remark}
\theoremstyle{plain}
\newtheorem{theorem}{Theorem}[section]
\newtheorem{lemma}[theorem]{Lemma}

\newtheorem{proposition}[theorem]{Proposition}

\newtheorem{corollary}[theorem]{Corollary}

\newtheorem{conjecture}[theorem]{Conjecture}

\begin{document}
\title{Distinct flattened partitions avoiding a pattern of length four}
\author{Toufik Mansour\\
\small Department of Mathematics, University of Haifa, 3498838 Haifa, Israel\\[-0.8ex]
\small\texttt{tmansour@univ.haifa.ac.il}\\[1.8ex]
Olivia Nabawanda\\
\small Department of Mathematics, Makerere University, Kampala, Uganda \\[-0.8ex]
\small\texttt{nabawandaolivia100@gmail.com}\\[1.8ex]
Mark Shattuck\\
\small Department of Mathematics, University of Tennessee, 37996 Knoxville, TN\\[-0.8ex]
\small\texttt{mark.shattuck2@gmail.com}\\[1.8ex]
}
\date{\small }
\maketitle

\begin{abstract}
Let $\mathcal{P}_n$ denote the set of distinct permutations of length $n$ that arise from the flattening process applied to the partitions of $[n]=\{1,\ldots,n\}$.  In this paper, we consider the problem of avoidance of a single classical pattern of length four by members of $\mathcal{P}_n$. Let $p_n(\tau)$ denote the number of members of $\mathcal{P}_n$ that avoid the pattern $\tau$.  We show that $p_n(\tau)=C_{n-1}$ for all $n \geq 1$ for seven patterns of length four yielding new combinatorial interpretations of the Catalan number sequence.  Further, we show that $p_n(\tau)$ corresponds to the binomial transform of Catalan numbers for three other patterns.  To establish our results, we suitably refine the counting sequence $p_n(\tau)$ in each case so as to obtain a system of functional equations satisfied by the corresponding generating functions.  These functional equations may then be solved explicitly leading to a determination of $p_n(\tau)$ in each case.

\end{abstract}

\noindent{\em Keywords:} distinct flattened partition, pattern avoidance, generating function, kernel method.

\noindent 2020 {\em Mathematics Subject Classification}:  05A15, 05A18.

\section{Introduction}

Let $\pi=\pi_1\cdots \pi_n$ and $\tau=\tau_1\cdots\tau_k$ be permutations of length $n$ and $k$, where $1 \leq k \leq n$, expressed using the one-line notation.  Then $\pi$ is said to \emph{contain} $\tau$ if there is a subsequence of $\pi$ that is order-isomorphic to $\tau$, i.e., there exist indices $1 \leq i_1<\cdots<i_k \leq n$ such that $\pi_{i_a}>\pi_{i_b}$ if and only if $\tau_a>\tau_b$ for all $1 \leq a,b \leq k$.  If no such subsequence exists, then $\pi$ is said to \emph{avoid} $\tau$, and in this context, $\tau$ is often referred to as a (classical) \emph{pattern}.  By a \emph{partition} of a finite set, it is meant a collection of nonempty, pairwise disjoint subsets, called \emph{blocks}, whose union is the set. Let $\mathcal{R}_n$ denote the set of all partitions of $[n]$.  In this paper, we consider the pattern avoidance problem on a certain kind of sequential representation for  members of $\mathcal{R}_n$.

Let $\Pi=B_1/B_2/\cdots/B_k \in \mathcal{R}_n$, where the elements within each block $B_i$ are in increasing order and blocks are arranged from left to right in ascending order of their smallest elements.  Such a partition is said to be in \emph{standard form}.  Callan \cite{Callan} defined the \emph{flattening} of $\Pi$, denoted by $\text{flatten}(\Pi)$, as the permutation of $[n]$ obtained by arranging the blocks of $\Pi$ in standard form, removing the block separators and considering the resulting word.  For example, if $n=9$ and $\Pi=\{6,4,3\},\,\{5\},\,\{1,2,9\},\,\{8,7\}$ is a member of $\mathcal{R}_9$, then the standard form of $\Pi$ is $\{1,2,9\},\,\{3,4,6\},\,\{5\},\,\{7,8\}$ and $\text{flatten}(\Pi)=129346578$. Note that the flattening operation is in general not injective, as distinct set partitions can produce the same permutation.  In this work, we treat the resulting permutation of length $n$ as a combinatorial object in its own right and refer to it as a \emph{distinct flattened partition}.

Let $\mathcal{P}_n$ denote the set of all distinct flattened partitions of $[n]$.  For example, if $n=4$, then we have $\mathcal{P}_4=\{1234,1243,1324,1342,1423\}$.  In \cite{NRB}, a bijection was defined between $\mathcal{P}_{n+1}$ and $\mathcal{R}_{n}$, and hence $|\mathcal{P}_{n+1}|=B_{n}$ for all $n \geq 0$, where $B_n$ denotes the $n$-th Bell number.  A \emph{run} of elements within a permutation is a maximal sequence of consecutive increasing entries.  For example, if $\sigma=178263495$, then there are four runs, namely, 178, 26, 349 and 5.  Note that $\mathcal{P}_n$ is seen to comprise the subset of permutations of length $n$ whose elements that start runs form an increasing subsequence.  This structure is inherited from the standard form of the underlying set partitions.

The first systematic study of pattern avoidance in flattened structures was undertaken by Callan \cite{Callan}, who initiated the enumeration of set partitions according to the avoidance of patterns in their flattenings.  Specifically, for a pattern $\tau$, he considered the subset of $\mathcal{R}_n$
whose members when flattened avoid $\tau$ and determined the counting sequences for all patterns $\tau$ of length three. Liu and Zhang \cite{LZ} extended the results of Callan to several patterns of length four, partially addressing a problem posed by Mansour in \cite[Research Direction 3.2]{Man}.  An analogous notion of flattening has also been considered on permutations wherein the role of the blocks is played by cycles expressed in standard form.  See respectively \cite{MS1} and \cite{MS3} for comparable results concerning the avoidance of a pattern of length three by flattened permutations  and derangements.

Instead of considering restrictions on the underlying set of partitions,  Nabawanda and Rakotondrajao \cite{NR} looked at the avoidance problem on the set $\mathcal{P}_n$ of distinct permutations that arise from the flattening process.  Given a pattern $\tau$, let $\mathcal{P}_n(\tau)$ denote the subset of $\mathcal{P}_n$ whose members avoid $\tau$ and let $p_n(\tau)=|\mathcal{P}_n(\tau)|$ for $n \geq 1$.  In \cite{NR}, the authors considered the avoidance of one or more patterns of length three by members of $\mathcal{P}_n$.  We summarize in Table \ref{tab:length3-patterns} below the formulas that were found in \cite{NR} for $p_n(\tau)$, where $\tau \in \mathcal{S}_3$.  Here, $C_n=\frac{1}{n+1}\binom{2n}{n}$ and $M_n$ denote the $n$-th Catalan and Motzkin numbers, respectively, and all formulas hold for $n \geq 1$, except in the case of $213/312$, where $n \geq 2$. In this paper, we extend these results to $\tau \in \mathcal{S}_4$ and determine the complete Wilf-classification for the avoidance of a single pattern of length four by distinct flattened partitions up to one conjectured equivalence (see Table \ref{tab:we-classes} in the final section).

\begin{table}[h!]
	\centering
	\renewcommand{\arraystretch}{1.3}
	\begin{tabular}{|c|c|c|}
		\hline
		\textbf{Pattern $\tau$} & \textbf{Counting sequence $p_n(\tau)$} & \textbf{OEIS sequence} \\
		\hline
		$123$ & $1, 1, 1, 0, 0, 0, \ldots$ & Trivial \\
		\hline
		$132$ & $1, 1, 1, \ldots$ & Trivial \\
		\hline
		$213, 312$ & $2^{n-2}$ & \texttt{A011782}  \\
		\hline
		 $231$ & $M_{n-1}$ & \texttt{A001006} \\
		\hline
		$321$ & $C_{n-1}$ & \texttt{A000108}  \\
		\hline
	\end{tabular}
	\caption{Enumeration of $\tau$-avoiding flattened partitions for patterns of length 3}
	\label{tab:length3-patterns}
\end{table}

Members of $\mathcal{P}_n$ have also recently been studied with regard to their containment/avoidance of consecutive patterns, and generating function formulas enumerating members of $\mathcal{P}_n$ according to the number of occurrences of a pattern of length three were found in \cite{MS2}.  See also \cite{NRB}, where the distribution of the statistic tracking the number of runs on $\mathcal{P}_n$ was studied.  Variants of the notion of flattening have also been recently considered on other discrete structures, such as Catalan words \cite{BHR}, Stirling permutations \cite{BEF} and parking functions \cite{EHM}.  Note that the flattening idea seems to have originated with the work of Carlitz \cite{Car}, who considered flattened permutations in his definition of a certain kind of inversion statistic on $\mathcal{S}_n$.

The main results of this paper are given in Theorems \ref{Catalan} and \ref{trCatalan} below and are proven over the next three sections.

\begin{theorem}\label{Catalan}
We have $p_n(\tau)=C_{n-1}$ for all $n \geq 1$ and each $\tau \in \{1432,2143,2314,2413,3142,3412,4132\}$.
\end{theorem}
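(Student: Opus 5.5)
The plan is to treat $\mathcal{P}_n$ as the set of permutations of $[n]$ whose run-initial entries increase from left to right; in particular every $\pi\in\mathcal{P}_n$ starts with $1$. For each of the seven patterns we want to show that the generating function $P_\tau(x)=\sum_{n\ge1}p_n(\tau)x^n$ equals $xC(x)$, where $C(x)=\frac{1-\sqrt{1-4x}}{2x}$. Equivalently, $P=P_\tau$ should satisfy $P=x+P^2$, the Catalan recursion.

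\emph{Patterns with a direct structural argument.} Where possible, we first find a structural decomposition giving the recursion $p_n=\sum_{i}p_ip_{n-i}$ outright. We would use the position or value of $n$, or the last run, as the splitting point.

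\begin{itemize}
\item For $\tau=4132$ and $\tau=1432$, the entry $n$ ends its run. Every run starting after $n$ begins with a value larger than all earlier run starts. Since $\pi$ starts with $1$, a $1432$-avoider has no $432$ among the entries other than $1$. This should force the later runs to be nearly increasing, and the problem may reduce to $321$-avoidance on $\pi_2\cdots\pi_n$ with restricted structure. We then use $p_n(321)=C_{n-1}$ from Table~\ref{tab:length3-patterns} together with a bijection.
\item For $3412$, $2413$, $3142$ and $2143$, a pattern occurrence needs a descent whose bottom entry lies below an earlier entry. Run starts increase, so descent bottoms are exactly the run starts after the first. We would split $\pi$ at the first run, or at the first descent, into a prefix and a suffix that interact only through one parameter.
\end{itemize}

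\emph{Refined generating functions for the remaining patterns.} Where no clean bijection appears, we follow the approach announced in the abstract. We refine $p_n(\tau)$ by a statistic, for instance the last (largest) run start $a$, or the number of entries exceeding the current maximum run start. We introduce $G(x,v)=\sum_{n,m}g_{n,m}x^nv^m$ and derive a recursion by appending a new run, or by inserting the largest element. This yields a linear functional equation of the form
\[
K(x,v)\,G(x,v)=A(x,v)+B(x,v)\,G(x,1),
\]
which we solve by the kernel method. We substitute the root $v=v_0(x)$ of $K(x,v)=0$ that is a power series in $x$, and check that $G(x,1)=xC(x)$. For $2314$ the natural statistic is the number of distinct values available as a "$23$" prefix. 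We would also exploit any symmetry that preserves $\mathcal{P}_n$ to pair up patterns, although reverse and complement do not preserve $\mathcal{P}_n$, so this is likely limited.

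\emph{Main obstacle.} We expect the hardest steps to be $2314$ and $3412$. For these, avoidance constrains the interaction between many runs at once, so a one-parameter refinement may not close the recursion. A two-catalytic-variable system might then be needed. We would first try tracking the pair (current maximum entry, last run start) and hope that one variable can be eliminated by a second application of the kernel method. Throughout, we would verify the initial values $1,1,2,5,14$ against the enumeration of $\mathcal{P}_4$ given earlier to catch errors in each derived recursion.
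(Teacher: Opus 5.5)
Your proposal is a list of strategies rather than a proof. For none of the seven patterns does it actually produce the decomposition or recurrence that forces $C_{n-1}$, so the theorem is not yet established.

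\textbf{The patterns $1432$ and $4132$.} Each needs only a one-line observation, and you have it only for the first.
For $1432$ you are essentially there. Since $\pi_1=1$ can never take part in a $321$, $\pi$ contains $1432$ iff it contains $321$. Hence $\mathcal{P}_n(1432)=\mathcal{P}_n(321)$ outright, and no bijection or analysis of ``nearly increasing'' runs is needed; this is the case $\tau=321$ of Proposition~\ref{genprop}.
For $4132$ you give no argument, and the remarks about $n$ ending its run do not lead to one. The missing idea is that $\mathcal{P}_n(4132)=\mathcal{P}_n(321)$ as well, and this genuinely uses the run structure. Suppose $xyz$ is a $321$ in $\pi\in\mathcal{P}_n$. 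Then $x$ lies in an earlier run than $y$, and $z$ in a later one. Let $w$ be the first letter of the run containing $y$. Run starts increase and $z$ is at least the start of its own run, so $w\neq y$ and $w<z$. Hence $xwyz$ is a $4132$.

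\textbf{The patterns $3142$, $2413$, $3412$.} Phrases such as ``split at the first run into pieces interacting through one parameter'' do not say which decomposition is valid, and the specific decompositions are the whole content.
For $3142$, condition on the second letter $i$. If $i\ge 3$, then $\pi=1i\alpha2\beta\gamma$, where $2\beta$ is an arbitrary avoider on exactly the letters $[2,i-1]$. The subsequence $i\alpha x\gamma$, with $x$ the last letter of $2\beta$, corresponds to an arbitrary avoider of size $n-i+2$. This gives $w_{n,i}=w_{i-2}w_{n-i+2}$ and $w_{n,2}=w_{n-1}$, hence $w_n=\sum_{k=1}^{n-1}w_kw_{n-k}$.
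For $2413$ and $3412$, when $n$ is not last, condition on the letter $j$ immediately after $n$. Show that the letters of $[j+1,n-1]$ lying left of $n$ (respectively, all letters after $j$) form a top interval $[n-m,n-1]$. Then $\pi$ splits into two avoiders sharing $j$, and again the Catalan recurrence results.
So your ``direct structural'' strategy does work for these three patterns, but only once these decompositions are found and shown to be both necessary and sufficient.

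\textbf{The patterns $2143$ and $2314$.} Here your strategy does not suffice.
For $2143$ the paper needs an auxiliary array of avoiders beginning $1i2$. The resulting equation contains the composed term $B\bigl(\frac{vx}{1-vx},1-vx\bigr)$ and is solved by guess-and-verify.
For $2314$ the paper refines by the last two letters. The resulting equation has two unknown specializations, $B_+(x,u,u)$ and $B_+\bigl(x,u,\frac{1}{1-x}\bigr)$, and requires two roots of a cubic kernel.
Neither reduces to your single-unknown template $K\,G=A+B\,G(x,1)$. Your suggested statistics for these cases are not developed far enough to check that any recurrence closes.
Also, $3412$ is not among the hard cases; it goes exactly like $2413$.
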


\begin{theorem}\label{trCatalan}
We have $p_n(\tau)=\sum_{k=0}^{n-2}\binom{n-2}{k}C_k$ for all $n \geq2$ and each $\tau \in \{3214,4213,4312\}$.
\end{theorem}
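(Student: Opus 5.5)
The plan is to use the characterization noted in the introduction. A permutation $\pi$ belongs to $\mathcal{P}_n$ exactly when $\pi_1=1$ and the first entries of its runs form an increasing sequence. Since $\pi$ always begins with $1$, deleting that initial $1$ gives a word $\pi'$ on $\{2,\ldots,n\}$ with the same run-start property. Now $\sum_{k}\binom{n-2}{k}C_k$ has generating function $\sum_{n\ge2}a_nx^{n-1}=xC\!\left(\frac{x}{1-x}\right)/(1-x)$, where $C(x)$ is the Catalan generating function. Combinatorially, this suggests the following: among the $n-2$ entries other than $1$ and some distinguished entry, choose $k$ to form a structure counted by $C_k$, and let the remaining $n-2-k$ entries be placed in a forced way. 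Accordingly, I will refine $p_n(\tau)$ by tracking the current last run start (equivalently, the minimal value allowed for the next run start) together with the position of $n$. I will then derive functional equations and solve them by the kernel method, as the abstract indicates.

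For $\tau=3214$, the key observation is the following. Any occurrence of $321$ in $\pi\in\mathcal{P}_n$ uses entries $c>b>a$ lying in three different runs, with decreasing values. Since the run starts increase, the entry $a$ occurs after its run start $s$, and $s<a$. I will decompose $\pi$ by its last entry or by the position of $n$. Entries larger than every element of a $321$ occurrence, and occurring to its right, create $3214$. So the maximal suffix of entries exceeding all earlier ones (the right-to-left structure after the last descent) must interact with a $321$-avoiding core, and $321$-avoiders in $\mathcal{P}_n$ are counted by $C_{n-1}$ by Table~\ref{tab:length3-patterns}. I will write $\pi=\sigma\,\rho$, where $\rho$ records the entries that can be ``inserted freely'' without creating a $4$ over a $321$. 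The aim is to show that the entries not involved in any potential $321$ can be chosen as an arbitrary subset and placed canonically, which gives the binomial convolution $\sum_k\binom{n-2}{k}C_k$. If a clean bijection fails, I will fall back on the generating function $F(x,v)$, where $v$ marks the number of entries greater than the current maximum run start that are still ``pending''. I will derive an equation of the form $K(x,v)F(x,v)=A(x,v)+B(x,v)F(x,1)$, choose $v=v_0(x)$ to kill the kernel, and verify that $F(x,1)$ equals the binomial-transform Catalan series.

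For $4213$ and $4312$, I will proceed in the same spirit, conditioning on the position of $n$. Write $\pi=\alpha\,n\,\beta$. Every run start after $n$ exceeds all run starts before it, and $\beta$ must avoid the length-$3$ pattern obtained by deleting the $4$ ($213$, respectively $312$). Both of these are counted by $2^{m-2}$-type formulas per Table~\ref{tab:length3-patterns}, but relative to the elements of $\alpha$. So I will refine further by the number of values of $\beta$ lying below certain run starts of $\alpha$, obtaining a two-variable system. I expect the $4213$ and $4312$ systems to coincide after a change of variables, which would also give Wilf-equivalence between them, and possibly with $3214$.

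The main obstacle will be the correct choice of refinement statistics, so that the recursion closes. Each time the maximum $n$ or a new $321$-type configuration appears, the constraints on later entries change. I will first compute $p_n(\tau)$ for small $n$ by hand to guess the right catalytic variable. I will then check that the resulting kernel has the expected root involving $\sqrt{1-6x+5x^2}$, since $\frac{x}{1-x}$ substituted into $C$ gives $\sqrt{(1-5x)/(1-x)}$-type radicals.
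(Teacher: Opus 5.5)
\textbf{Verdict: there is a genuine gap.} Your proposal is a plan rather than a proof. For none of the three patterns do you fix a refinement, write a recurrence, or derive a functional equation. You yourself name choosing refinement statistics ``so that the recursion closes'' as the main obstacle, and that choice, with the case analysis behind it, is essentially the whole argument.

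\textbf{What the paper supplies for $3214$.} It tracks the last two letters $i,j$. The key case $j>i+1$ splits on the antepenultimate letter $r$:
if $i<r<j$, avoidance forces $\pi$ to end $s(j-m)\cdots(j-1)ij$ with $s<i$;
if $r>j$, then $\pi=\pi's\delta\beta rij$, where $\delta$ is a (possibly empty) consecutive block ending at $j-1$ and $\beta$ is an increasing block from $[j+1,r-1]$, and $\delta$, $\beta$, $i$ and $j$ can all be deleted.
The resulting equation for $D_+(x,u,v)$ contains two unknown specializations, $D_+(x,u,u)$ and $D_+(x,u,\frac{1}{1-x})$. Two roots of a cubic kernel must be substituted, so the one-unknown shape $K(x,v)F(x,v)=A+BF(x,1)$ you anticipate looks optimistic.

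\textbf{What the paper supplies for $4213$ and $4312$.} It refines by the first descent top $i$ and the subclass beginning $1i2$, then splits on the fourth letter. This gives a quadratic kernel whose root $v_0=\frac{1+x-\sqrt{1-6x+5x^2}}{2x(2-x)}$ determines $G(x)$. The case $4312$ is covered by checking that its arrays obey the same recurrences.

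\textbf{The bijective route.} Your alternative (a $C_k$-structure on $k$ chosen entries, the rest placed canonically) is only a reading of the generating function. No such decomposition is exhibited.

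\textbf{Errors in the concrete observations.} Two of the specific claims you do make are wrong or insufficient.
First, the claim that the smallest entry $a$ of a $321$ occurrence is never a run start is false. In $15243\in\mathcal{P}_5$, the occurrence $543$ has $a=3$, which starts the last run. It is the middle entry that cannot start a run.
Second, for $4213$, writing $\pi=\alpha n\beta$ and requiring $\beta$ to avoid $213$ only controls occurrences whose `4' is $n$. For example, $16248357\in\mathcal{P}_8$ has $\beta=357$, yet $6435$ is a $4213$ with `4' and `2' in $\alpha$ and `1', `3' in $\beta$. So conditioning on the position of $n$ does not separate the constraints. You also give no argument that your extra statistic (``values of $\beta$ below certain run starts of $\alpha$'') produces a closed recursion.
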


Note that Theorem \ref{Catalan}  follows from combining Theorems \ref{3142t1} and \ref{2143th}, Corollaries \ref{2413cor1} and \ref{2314cor1} and Proposition \ref{genprop} below (together with the fact $p_n(321)=C_{n-1}$).  From Theorem \ref{Catalan}, one obtains some apparently new combinatorial interpretations for the Catalan number sequence in terms of pattern avoidance.  See \cite{Stan} for a comprehensive list of discrete structures already known to be enumerated by $C_n$.  Combining Corollary \ref{3214cor} with Theorem \ref{4213th} below yields Theorem \ref{trCatalan}, which provides some new structures enumerated by the binomial transform of Catalan sequence (see A007317 in \cite{Sloane}).  We will make use of the generating function formulas
$$\sum_{n\geq 0}C_{n}x^n=\frac{1-\sqrt{1-4x}}{2x} \quad \text{and} \quad \sum_{n\geq0}\sum_{k=0}^{n}\binom{n}{k}C_kx^n=\frac{1-x-\sqrt{1-6x+5x^2}}{2x(1-x)},$$
and often denote the former by $C=C(x)$.

The organization of this paper is as follows.  In the next two sections, we show $p_n(\tau)=C_{n-1}$ for $\tau \in \{2413,3412,3142,2143,2314\}$, and in the fourth section, we prove Theorem \ref{trCatalan}.  Note that these results are obtained as specific cases of more general formulas involving the distributions of certain statistics on the respective avoidance classes.   In the final section, some enumerative results for singleton Wilf-classes are found and a general criterion is given for when the patterns $\tau$ and $1(\tau+1)$ are equivalent on $\mathcal{P}_n$.  Furthermore, a couple of conjectures are raised, one concerning an equivalence and another an enumeration.  Our results for the enumeration/ classification of patterns of length four are summarized in Table  \ref{tab:we-classes}.

We briefly describe now our methods as follows.  To establish our results, we first consider various refinements of the sequences $p_n(\tau)$ based on certain statistics on $\mathcal{P}_n$ which are specific to the pattern at hand.  These refinements lead to arrays with two or three parameters, for which one is able to write a recurrence (whereas finding a recurrence a priori for the sequence $p_n(\tau)$ by direct combinatorial reasoning may not be possible).  The statistics that proved useful here in identifying an appropriate array for writing a recurrence included ones tracking the second or last letter, the last two letters or the final letter in the first run.  Still other parameters were needed in finding a recurrence once a suitable array had been identified, such as the length of the first run or the letter directly following $n$ (if it exists).  We then defined certain generating functions, often based on the form of the recurrence, for the aforementioned arrays  and/or their row sums.  Translating the recurrences to generating functions, sometimes a difficult step, then leads to a single functional equation or a system.  We then proceed to find an explicit solution of the functional equation(s) in its full generality, using the kernel method (see, e.g., \cite{HM}) or other techniques.  Taking all of the auxiliary variables to be unity in this solution then yields a formula for the generating function of the sequence $p_n(\tau)$, from which one may deduce the desired avoidance result.

\section{The patterns 2413, 3412 and 3142}

We first establish the cases 2413 and 3412, which may be treated simultaneously due to the similarity of their arguments.  Recall that a \emph{descent} within a sequence $\sigma=\sigma_1\cdots \sigma_m$ corresponds to an index $i \in [m-1]$ such that $\sigma_i>\sigma_{i+1}$.  The actual entries $\sigma_i$ and $\sigma_{i+1}$ are referred to as the \emph{top} and \emph{bottom} of the descent.  Note that $\mathcal{P}_n$ consists of those permutations of length $n$ that start with 1 whose descent bottoms from left to right form an increasing sequence.  In particular, no member of $\mathcal{P}_n$ contains two consecutive descents.

Let $u_n=p_n(2413)$ and $v_n=p_n(3412)$ for $n \geq 1$.  In order to write recurrences, we consider the arrays $u_{n,i}$ and $v_{n,i}$ enumerating respectively the members of $\mathcal{U}_n=\mathcal{P}_n(2413)$ and $\mathcal{V}_n=\mathcal{P}_n(3412)$  having last letter $i$ for $1 \leq i \leq n$. Note that $u_{n,1}=v_{n,1}=\delta_{n,1}$ for $n \geq 1$.

The $u_{n,i}$ and $v_{n,i}$ are given recursively as follows.

\begin{lemma}\label{2413lem1}
If $n \geq 4$, then
\begin{equation}\label{2413lem1e1}
u_{n,i}=u_{n-1,i}+\sum_{j=2}^{i-1}\sum_{m=0}^{n-i-1}u_{j+m,j}u_{n-m-j,i-j+1}, \qquad 3 \leq i \leq n-1,
\end{equation}
and
\begin{equation}\label{2413lem1e2}
v_{n,i}=v_{n-1,i}+\sum_{j=2}^{i-1}\sum_{m=0}^{i-j-1}v_{i-m-1,j}v_{n+m-i+1,m+2}, \qquad 3 \leq i \leq n-1,
\end{equation}
with $u_{n,2}=v_{n,2}=1$, $u_{n,n}=u_{n-1}$ and $v_{n,n}=v_{n-1}$ for all $n \geq 2$.
\end{lemma}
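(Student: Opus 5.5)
We first dispose of the boundary values, then prove the recurrence by conditioning on the position of $n$. Throughout we use the description of $\mathcal{P}_n$ given above: permutations starting with $1$ whose descent bottoms increase from left to right.

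\textbf{Boundary values.} If the last letter is $2$ and $n\ge 3$, then $2$ is a descent bottom. Every earlier descent bottom would have to be smaller than $2$, which is impossible, so the permutation is forced to be $13\cdots n2$. This permutation avoids both patterns, giving $u_{n,2}=v_{n,2}=1$. If the last letter is $n$, write $\pi=\sigma n$ with $\sigma\in\mathcal{P}_{n-1}$. A final $n$ can only play the role of the last letter of an occurrence, and in both $2413$ and $3412$ the last letter is not the largest. So $\pi$ avoids $\tau$ if and only if $\sigma$ does, which gives $u_{n,n}=u_{n-1}$ and $v_{n,n}=v_{n-1}$.

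\textbf{Case $n$ directly before $i$.} Now let $3\le i\le n-1$. Since $i<n$, the letter $n$ is followed by some letter, and that letter is a descent bottom. Suppose first that $n$ immediately precedes the final $i$. Deleting $n$ gives a member of $\mathcal{P}_{n-1}$ ending in $i$, because the descent-bottom condition is preserved. Conversely, inserting $n$ just before the last letter of such a permutation keeps it in $\mathcal{P}_n$. With only one letter after it, $n$ cannot take part in an occurrence of $2413$ or $3412$. This case therefore accounts for the term $u_{n-1,i}$ (respectively $v_{n-1,i}$).

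\textbf{Case $n$ earlier.} Otherwise write $\pi=\alpha\, n\, j\,\beta\, i$ with $\beta$ nonempty, where $j$ is the letter following $n$. Since $j$ and $i$ are both descent bottoms, $2\le j\le i-1$. I will show that every letter smaller than $j$ lies in $\alpha$. If some letter $x<j$ appeared after $j$, then the smallest letter after $j$ would be less than $j$. It is preceded by a larger letter, so it is a descent bottom, contradicting monotonicity of descent bottoms. Let $m$ be the number of letters of $\alpha$ exceeding $j$.

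For $2413$, take any such letter $a>j$ of $\alpha$ and any letter $b$ of $\beta i$. The requirement that $a,n,j,b$ not form a $2413$ forces $b<a$. Hence the $m$ large letters of $\alpha$ are exactly the values $n-m,\dots,n-1$, which forces $m\le n-i-1$. This suggests cutting $\pi$ into two pieces:
\begin{itemize}
\item the prefix $\alpha\,j$, suitably standardized together with $n$, of length $j+m$ and ending in its $j$-th smallest letter;
\item the suffix $j\,\beta\, i$, reduced so that $j$ becomes $1$ (and with $n$ restored at the appropriate place), of length $n-m-j$ and ending in $i-j+1$.
\end{itemize}
The key check is that $\pi$ avoids $2413$ if and only if both pieces do. Any occurrence straddling the cut would have to use $n$ together with letters on both sides, and the interval structure just established rules this out.

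For $3412$ I will argue the same way, but the forbidden configuration is now $a,b$ in $\alpha$ (or $\alpha n$) followed by $j,c$. I expect this to force an interval structure with the roles reversed: $\beta$ splits according to how many of its letters lie below $i$ but above $j$. That yields the index $m$ running over $0,\dots,i-j-1$ and the factors $v_{i-m-1,j}\,v_{n+m-i+1,m+2}$.

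\textbf{Main obstacle.} The hard step is pinning down exactly which two subwords to standardize, so that the lengths and last letters match the stated indices, and verifying that the map is a bijection onto pairs. The bijection requires three things: each piece lies in $\mathcal{P}$, each piece avoids $\tau$, and every pair glues back to a valid $\pi$. I would first test the proposed decomposition against small cases, such as $n=5,6$, to fix the conventions for where $n$ and $j$ are kept. Only then would I prove the two directions of avoidance.
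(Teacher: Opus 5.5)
Your route is the paper's: split on whether $n$ is penultimate, and otherwise cut $\pi=\alpha\,n\,j\,\beta\,i$ at the descent $n\,j$. Several parts are correct: the boundary values, the penultimate-$n$ case, the proof that all of $[j-1]$ lies in $\alpha$, and the upper interval $[n-m,n-1]$ for $2413$.

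\textbf{The $2413$ case.} The step that actually yields $u_{j+m,j}u_{n-m-j,i-j+1}$ is left as your ``main obstacle''. The one reason you give for it is false: an occurrence straddling the cut need not use $n$. For example, take $\pi=1\,3\,6\,2\,7\,4\,5$, with $n=7$, $i=5$, $j=4$, $\alpha=1362$, $m=1$; the interval condition holds. Yet $3\,6\,2\,5$ is a $2413$ whose `2', `4', `1' lie in $\alpha$ and whose `3' lies in $\beta i$, and it does not use $n$.

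The missing idea is as follows. In any such occurrence the `2' and `1' lie in $[j-1]$ and the `4' lies in $[n-m,n-1]$. Replacing the `3' by $j$ therefore gives an occurrence inside $\alpha j$. This is exactly why the first piece must be $\alpha j$, with $j$ appended. All other straddles, in particular those using $n$, are ruled out by the interval structure.

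You also need to fix the pieces. They are $\alpha j$ and $j\beta i$ with $n$ simply deleted. This is what your lengths $j+m$ and $n-m-j$ require, contrary to ``together with $n$'' and ``$n$ restored''. You must then check that every pair glues back into $\mathcal{P}_n$. This holds because each descent bottom of $\alpha$ is smaller than $j$.

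Two smaller slips. First, $\beta$ may be empty: the example above has empty $\beta$, and these cases give the terms with $n-m-j=2$. Second, $i$ need not be a descent bottom (e.g.\ $15234$); the inequality $j<i$ follows instead from $[j-1]\subseteq\alpha$.

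\textbf{The $3412$ case.} Here you give only the expected answer, with no argument. The needed observation is that if $a\in\alpha$ and $d\in\beta i$ satisfy $j<d<a$, then $a\,n\,j\,d$ is a $3412$. Since every letter of $\beta i$ exceeds $j$, this forces every letter of $\beta i$ to exceed every letter of $\alpha$. Hence the letters of $\beta i$ are exactly the top interval $[n-m,n-1]$ with $n-i\le m\le n-j-1$, and the letters of $\alpha j$ are exactly $[n-m-1]$.

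Under this structure no occurrence of $3412$ can straddle the cut. So $\pi$ avoids $3412$ if and only if $\alpha j\in\mathcal{V}_{n-m-1,j}$ and $j\beta i$ standardizes into $\mathcal{V}_{m+1,i-n+m+2}$. Replacing $m$ by $m+n-i$ gives the stated factors $v_{i-m-1,j}v_{n+m-i+1,m+2}$ with $0\le m\le i-j-1$. Your parameter ``number of letters of $\beta$ below $i$'' is this reindexed $m$. However, it is a consequence of the upper-interval structure, and that structure is what has to be proved.
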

\begin{proof}
Let $\mathcal{U}_{n,i}$ and $\mathcal{V}_{n,i}$ denote respectively the subsets of $\mathcal{U}_n$ and $\mathcal{V}_n$ enumerated by $u_{n,i}$ and $v_{n,i}$.  Note that $\mathcal{U}_{n,2}$ and $\mathcal{V}_{n,2}$ are singletons consisting of the sequence $134\cdots n2$, whereas members of $\mathcal{U}_{n,n}$ and $\mathcal{V}_{n,n}$ are synonymous with those in $\mathcal{U}_{n-1}$ and $\mathcal{V}_{n-1}$, which implies the conditions stated for $i=2,n$.  So assume $n \geq 4$, where $3 \leq i \leq n-1$, and we first show \eqref{2413lem1e1}.  To do so, consider whether or not $n$ is the penultimate letter of $\pi \in \mathcal{U}_{n,i}$ with $3 \leq i \leq n-1$.  If so, then $n$ may clearly be deleted yielding $u_{n-1,i}$ possibilities.  If not, then consider the letter $j$ directly succeeding $n$ within $\pi$.  Note $j$ being a descent bottom implies $j \in [2,i-1]$.  Suppose $\pi=\alpha nj\beta i$ (*), where $\beta$ is possibly empty.  Since $j$  is a descent bottom, all letters in $[j-1]$ must occur in $\alpha$.  Further, the set of letters in $[j+1,n-1]$ occurring within $\alpha$ must comprise an interval of the form $[n-m,n-1]$, for otherwise, a 2413 would occur wherein the roles of the `4' and `1' are played by $n$ and $j$.  Note that $i$ occurring to the right of $j$ implies $0 \leq m \leq n-i-1$.  Thus, the sections $\alpha j$ and $j\beta i$ are 2413-avoiding flattened partitions comprising the letters in $[j]\cup[n-m,n-1]$ and $[j,n-m-1]$, respectively.  Upon standardizing, we have $\alpha j \in \mathcal{U}_{j+m,j}$ and $j\beta i \in \mathcal{U}_{n-m-j,i-j+1}$.

Furthermore, we have that all $\pi$ of the form (*), where $\alpha$ and $\beta$ are as described above, indeed avoid 2413.  To see this, first note by requiring that $\alpha j$ (and not just $\alpha$ by itself) avoid 2413, we eliminate the possibility of a 2413 arising when the sections $\alpha$ and $\beta$ are combined to form $\pi$ as in (*). If such a 2413 were to arise, the `2' and `1' would be played by letters in $[j-1]$, with the `3' belonging to $[j,n-m-1]$.  Note that indeed any letter in $[j,n-m-1]$ could serve as the `3' due to its lying to the right of $\alpha$, and hence $\alpha j$ avoiding such a 2413 implies $\pi$ does as well.  Moreover, there can clearly be no occurrence of 2413 in $\pi$ of the form (*) with $\alpha,\,\beta$ as stated wherein an element of $[n-m,n-1]$ serves as the `2', as all letters to the right of $n$ are less than $n-m$.  Thus, the conditions stated in the preceding paragraph are both necessary and sufficient for $\pi$ of the form (*) to avoid 2413.  Considering all possible $j$ and $m$ then implies that the double summation expression on the right-hand side of \eqref{2413lem1e1} enumerates all members of $\mathcal{U}_{n,i}$ whose penultimate letter is not $n$.  When taken together with the first case above, one obtains recurrence \eqref{2413lem1e1}.

A similar argument applies to \eqref{2413lem1e2}.  Suppose $\pi \in \mathcal{V}_{n,i}$ whose penultimate letter is not $n$ is decomposed as in (*).  Then all letters in $\beta i$ must now be greater than those in $\alpha$, for otherwise there would be a 3412 wherein the roles of  `4' and `1' are played by $n$ and $j$.  Therefore, the letters in $\beta i$ must comprise $[n-m,n-1]$ for some $m \geq 1$.  Note $j+1 \leq n-m \leq i$ implies $n-i \leq m \leq n-j-1$.  Analogous to before, the requirement that the sections $\alpha j$ and $j\beta i$ both avoid 3412, with $\beta i$ comprising $[n-m,n-1]$ for some $m$ as given, is seen to be both necessary and sufficient for $\pi$ to avoid 3412.  Note that within $j\beta i$, the letter $n-m$ serves as the 2, and hence $i$ acts as $i-(n-m-2)$, in this flattened partition of size $m+1$.  Thus, the sections $\alpha j$ and $j \beta i$ on their respective alphabets are seen to correspond to members of $\mathcal{V}_{n-m-1,j}$ and $\mathcal{V}_{m+1,i-n+m+2}$. Considering all possible $j$ and $m$, and replacing $m$ by $m+n-i$, then accounts for the summation expression in \eqref{2413lem1e2} and completes the proof.
\end{proof}

Define the distribution polynomials $u_n(y)=\sum_{i=1}^nu_{n,i}y^{i-1}$ and $v_n(y)=\sum_{i=1}^nv_{n,i}y^{i-1}$ for $n \geq 1$.

\begin{lemma}\label{2413lem2}
If $n \geq 3$, then
\begin{equation}\label{2413lem2e1}
u_n(y)=y^{n-1}u_{n-1}(1)+u_{n-1}(y)+\sum_{m=2}^{n-2}u_m(y)u_{n-m}(y)
\end{equation}
and
\begin{equation}\label{2413lem2e2}
v_n(y)=y^{n-1}v_{n-1}(1)+v_{n-1}(y)+\sum_{m=2}^{n-2}y^{n-m-1}v_m(y)v_{n-m}(1),
\end{equation}
with $u_1(y)=v_1(y)=1$ and $u_2(y)=v_2(y)=y$.
\end{lemma}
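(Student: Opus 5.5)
The plan is to derive Lemma~\ref{2413lem2} from Lemma~\ref{2413lem1}: multiply each recurrence by $y^{i-1}$, sum over $i$, and show that the double sum becomes a convolution of full distribution polynomials after a change of summation indices. The initial values are read off from $\mathcal{P}_1=\{1\}$ and $\mathcal{P}_2=\{12\}$, which give $u_1=v_1=1$ and $u_2=v_2=y$. The case $n=3$, which lies outside Lemma~\ref{2413lem1}, I would check directly: $\mathcal{P}_3=\{123,132\}$ gives $u_3(y)=v_3(y)=y^2+y$. This agrees with both formulas, since the sums over $m$ are empty and $u_2(1)=v_2(1)=1$.

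For $n\ge 4$ I would split $u_n(y)$ into four parts. The term $i=1$ vanishes because $u_{n,1}=0$. The term $i=2$ contributes $y$, and the term $i=n$ contributes $y^{n-1}u_{n-1}(1)$. The remaining terms are $\sum_{i=3}^{n-1}u_{n,i}y^{i-1}$, to which I apply \eqref{2413lem1e1}. The part $\sum_{i=3}^{n-1}u_{n-1,i}y^{i-1}$, together with the leftover $y=u_{n-1,2}\,y$, is exactly $u_{n-1}(y)$, because $u_{n-1,1}=0$. For the double sum I write $y^{i-1}=y^{j-1}\,y^{(i-j+1)-1}$ and substitute $a=j+m$, $b=n-a$ and $k=i-j+1$. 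I then need to check that $(i,j,m)$ with $3\le i\le n-1$, $2\le j\le i-1$, $0\le m\le n-i-1$ corresponds bijectively to $(a,j,k)$ with $2\le a\le n-2$, $2\le j\le a$, $2\le k\le b$:
\begin{itemize}
\item $m\ge0$ is equivalent to $j\le a$;
\item $m\le n-i-1$ is equivalent to $k\le b$;
\item $j\le i-1$ is equivalent to $k\ge2$;
\item $i\ge 3$ and $i\le n-1$ then follow automatically.
\end{itemize}
Since $u_{a,1}=u_{b,1}=0$ for $a,b\ge 2$, the inner sums are the full polynomials $u_a(y)$ and $u_b(y)$. This gives $\sum_{m=2}^{n-2}u_m(y)u_{n-m}(y)$.

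For $v_n(y)$ the boundary terms are handled the same way. For the double sum in \eqref{2413lem1e2} I substitute $a=i-m-1$ (the size of the first factor), $b=n-a$ and $k=m+2$. Then $i-1=a+k-2$, so $y^{i-1}=y^{a-1}\,y^{k-1}$, and the weight does not depend on $j$. Summing over $j$ therefore gives $v_a(1)$, and summing over $k$ gives $v_b(y)$. Again I would check that $(a,j,k)$ ranges bijectively over $2\le a\le n-2$, $2\le j\le a$, $2\le k\le b$:
\begin{itemize}
\item $m\le i-j-1$ is equivalent to $j\le a$;
\item $i\le n-1$ is equivalent to $k\le b$;
\item conversely, $i=a+k-1$ satisfies $3\le i\le n-1$.
\end{itemize}
This gives $\sum_{a}y^{a-1}v_a(1)v_{n-a}(y)$. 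Relabeling $a=n-m$ turns it into $\sum_{m=2}^{n-2}y^{n-m-1}v_m(y)v_{n-m}(1)$, as stated.

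The main obstacle is the bookkeeping of these index ranges. In particular, I must confirm that the boundary constraints in Lemma~\ref{2413lem1} ($i\le n-1$ and the limits on $m$) are exactly what makes the inner sums run over the complete ranges $[2,a]$ and $[2,b]$, so that no partial polynomial is left over. Once the bijections above are verified, the rest is routine.
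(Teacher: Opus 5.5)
Your proposal is correct and follows essentially the same route as the paper: multiply the recurrences of Lemma~\ref{2413lem1} by $y^{i-1}$, sum over $i$, absorb the boundary terms $i=2$ and $i=n$, and reindex the remaining double sum into a convolution of full distribution polynomials. Your bijective change of variables $(i,j,m)\mapsto(a,j,k)$ is a compact form of the paper's successive interchanges of summation. Your direct check of $n=3$ and your explicit treatment of the $v_n(y)$ case fill in details the paper leaves implicit. In particular, in the $v_n(y)$ case the substitution $a=i-m-1$, $k=m+2$ makes the weight $y^{a-1}y^{k-1}$ independent of $j$.
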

\begin{proof}
Multiplying both sides of \eqref{2413lem1e1} by $y^{i-1}$, and summing over $3 \leq i \leq n-1$, yields
\begin{align*}
u_n(y)&=y+u_{n-1}y^{n-1}+\sum_{i=3}^{n-1}u_{n-1,i}y^{i-1}+\sum_{i=3}^{n-1}y^{i-1}\sum_{j=2}^{i-1}\sum_{m=0}^{n-i-1}u_{j+m,j}u_{n-m-j,i-j+1}\\
&=y^{n-1}u_{n-1}(1)+u_{n-1}(y)+\sum_{j=2}^{n-2}\sum_{i=j+1}^{n-1}y^{i-1}\sum_{m=0}^{n-i-1}u_{j+m,j}u_{n-m-j,i-j+1}.
\end{align*}
Working on the preceding sum, we have
\begin{align*}
&\sum_{j=2}^{n-2}\sum_{i=j+1}^{n-1}y^{i-1}\sum_{m=0}^{n-i-1}u_{j+m,j}u_{n-m-j,i-j+1}\\
&=\sum_{j=2}^{n-2}\sum_{m=0}^{n-j-2}u_{j+m,j}\sum_{i=j+1}^{n-m-1}u_{n-m-j,i-j+1}y^{i-1}\\
&=\sum_{j=2}^{n-2}\sum_{m=0}^{n-j-2}u_{j+m,j}y^{j-1}\sum_{i=2}^{n-m-j}u_{n-m-j,i}y^{i-1}=\sum_{j=2}^{n-2}\sum_{m=0}^{n-j-2}y^{j-1}u_{j+m,j} u_{n-m-j}(y)\\
&=\sum_{j=2}^{n-2}\sum_{m=j}^{n-2}y^{j-1}u_{m,j}u_{n-m}(y)=\sum_{m=2}^{n-2}u_{n-m}(y)\sum_{j=2}^mu_{m,j}y^{j-1}=\sum_{m=2}^{n-2}u_m(y)u_{n-m}(y),
\end{align*}
which implies \eqref{2413lem2e1}. Formula \eqref{2413lem2e2} may be shown in a comparable manner starting with \eqref{2413lem1e2}.
\end{proof}

Let $U(x,y)=\sum_{n\geq1}u_n(y)x^n$ and $V(x,y)=\sum_{n\geq1}v_n(y)x^n$.

\begin{theorem}\label{2413th1}
We have
\begin{equation}\label{2413th1e1}
U(x,y)=\frac{1+x-\sqrt{1-4x+x^2+2x\sqrt{1-4xy}}}{2}
\end{equation}
and
\begin{equation}\label{2413th1e2}
V(x,y)=\frac{x(3y-2+2x-2xy-y\sqrt{1-4xy})}{2(y-1+x)}.
\end{equation}
\end{theorem}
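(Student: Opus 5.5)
The plan is to translate the recurrences of Lemma \ref{2413lem2} into functional equations for $U(x,y)$ and $V(x,y)$. I would first set $y=1$ to obtain the single-variable generating functions $U(x,1)$ and $V(x,1)$, and then substitute these back to solve for the full bivariate functions. In both cases I multiply the recurrence by $x^n$, sum over $n\geq3$, and add the initial terms $x+yx^2$.

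\textbf{The case of $U$.} Summing \eqref{2413lem2e1} over $n \geq 3$ gives the following pieces.
\begin{itemize}
\item The term $\sum_{n\ge3}y^{n-1}u_{n-1}(1)x^n$ equals $x\,(U(xy,1)-xy)$, since $u_{n-1}(1)=u_{n-1}$.
\item The term $\sum_{n\geq3}u_{n-1}(y)x^n$ equals $x\,(U(x,y)-x)$.
\item The convolution with $m,n-m\geq 2$ equals $(U(x,y)-x)^2$.
\end{itemize}
This yields the quadratic
$$U=x+x^2y+x(U(xy,1)-xy)+x(U-x)+(U-x)^2 .$$
The quadratic form should match the square root in \eqref{2413th1e1}. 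Setting $y=1$ gives a quadratic in $U(x,1)$ alone, which I solve and expect to produce $U(x,1)=xC(x)$. As a check, $u_n=C_{n-1}$ should hold.

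Then $U(xy,1)=xyC(xy)=\frac{1-\sqrt{1-4xy}}{2}$. Substituting this and solving the quadratic in $U$, with the sign chosen so that $U=x+O(x^2)$, should give \eqref{2413th1e1}. Indeed the discriminant would contain $x\sqrt{1-4xy}$, consistent with the claimed formula.

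\textbf{The case of $V$.} The same procedure applied to \eqref{2413lem2e2} gives the first two pieces as before. The third piece is now
$$\sum_{m\ge2}v_m(y)\sum_{k\geq2}y^{k-1}v_k(1)x^{m+k}=(V(x,y)-x)\,y^{-1}\bigl(V(xy,1)-xy\bigr),$$
with $k=n-m$. The resulting equation is linear in $V(x,y)$, with the unknown $V(xy,1)$ appearing. Setting $y=1$ gives a quadratic for $V(x,1)$, which again should equal $xC(x)$.

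Substituting $V(xy,1)=\frac{1-\sqrt{1-4xy}}{2}$, I then solve the linear equation
$$V\Bigl(1-x-\tfrac{1}{y}\bigl(V(xy,1)-xy\bigr)\Bigr)=\cdots .$$
The denominator becomes $1-x-\frac{1-2xy-\sqrt{1-4xy}}{2y}$. This should simplify to the stated rational-in-$\sqrt{1-4xy}$ expression \eqref{2413th1e2}, possibly after rationalizing. The factor $y-1+x$ in the denominator suggests that multiplying numerator and denominator by a conjugate will be needed.

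\textbf{Main obstacle.} I expect the main difficulty to be this algebraic simplification, together with the careful bookkeeping of boundary terms: the $n=3$ case and the empty-sum conventions. I would verify both by comparing the first few coefficients, for example $v_3(y)=y+y^2$ and $v_4(y)=y+y^2+3y^3$, against the closed forms.
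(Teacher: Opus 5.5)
Your proposal is correct and follows the paper's proof essentially step for step. You use the same translation of Lemma \ref{2413lem2} into \eqref{2413th1e3} and \eqref{2413th1e4}, and the same specialization $y=1$ giving $U(x,1)=V(x,1)=xC$. You then back-substitute $U(xy,1)=V(xy,1)=\frac{1-\sqrt{1-4xy}}{2}$, and your conjugate $2y-1-\sqrt{1-4xy}$ does rationalize $V=\frac{x(3y-1+(1-y)\sqrt{1-4xy})}{2y-1+\sqrt{1-4xy}}$ into \eqref{2413th1e2}.

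One slip in your proposed sanity check: the members $1234,1243,1324,1342,1423$ give $v_4(y)=y+2y^2+2y^3$, not $y+y^2+3y^3$. The former is what the closed form yields.
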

\begin{proof}
Multiplying both sides of \eqref{2413lem2e1} by $x^n$, and summing over all $n \geq3$, we obtain
\begin{align}
U(x,y)&=x+x^2y+x(U(xy,1)-xy)+x(U(x,y-x)+\sum_{n\geq4}x^n\sum_{m=2}^{n-2}u_m(y)u_{n-m}(y)\notag\\
&=x-x^2+x(U(xy,1)+U(x,y))+\sum_{m\geq2}u_m(y)\sum_{n\geq m+2}u_{n-m}(y)x^n\notag\\
&=x-x^2+x(U(xy,1)+U(x,y))+(U(x,y)-x)^2.\label{2413th1e3}
\end{align}
Taking $y=1$ in \eqref{2413th1e3} implies $p=U(x,1)$ satisfies $p^2-p+x$, and hence $p=\frac{1-\sqrt{1-4x}}{2}=xC$, where the choice of the negative root is dictated by the condition $U(0,1)=0$.  Substituting $U(xy,1)=\frac{1-\sqrt{1-4xy}}{2}$ into \eqref{2413th1e3} implies
$$U(x,y)^2-(1+x)U(x,y)+\frac{x}{2}\left(3-\sqrt{1-4xy}\right)=0,$$
and solving for $U(x,y)$ in the last equation gives \eqref{2413th1e1}.

Using \eqref{2413lem2e2}, one can show
\begin{equation}\label{2413th1e4}
V(x,y)=x-x^2+x(V(xy,1)+V(x,y))+\frac{1}{y}(V(x,y)-x)(V(xy,1)-xy).
\end{equation}
Taking $y=1$ in \eqref{2413th1e4} implies $V(x,1)=xC$, as before.  Substituting this back into \eqref{2413th1e4}, and solving for $V(x,y)$, leads to \eqref{2413th1e2} after some algebra.
\end{proof}

Letting $y=1$ in Theorem \ref{2413th1} yields the following result.

\begin{corollary}\label{2413cor1}
We have $U(x,1)=V(x,1)=\frac{1-\sqrt{1-4x}}{2}=xC$, and hence $p_n(2413)=p_n(3412)=C_{n-1}$ for all $n \geq 1$.
\end{corollary}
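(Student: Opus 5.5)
The plan is to specialize the explicit formulas of Theorem \ref{2413th1} at $y=1$ and then read off coefficients. Since $U(x,y)=\sum_{n\geq1}u_n(y)x^n$ with $u_n(1)=\sum_i u_{n,i}=u_n=p_n(2413)$, and likewise $V(x,1)=\sum_{n\geq1}p_n(3412)x^n$, it suffices to show that both $U(x,1)$ and $V(x,1)$ equal $\frac{1-\sqrt{1-4x}}{2}$. The identification with $xC$ and the coefficient extraction $[x^n]\,xC=C_{n-1}$ then follow from the stated formula $\sum_{n\geq0}C_nx^n=\frac{1-\sqrt{1-4x}}{2x}$.

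For $U$, set $y=1$ in \eqref{2413th1e1}. The inner radical becomes $\sqrt{1-4x}$, so the expression under the outer root is $1-4x+x^2+2x\sqrt{1-4x}$. I will check that this is a perfect square, namely $(x+\sqrt{1-4x})^2=x^2+2x\sqrt{1-4x}+1-4x$. Taking the branch that is positive near $x=0$ gives $U(x,1)=\frac{1+x-x-\sqrt{1-4x}}{2}=\frac{1-\sqrt{1-4x}}{2}$. Alternatively, and more directly, the proof of Theorem \ref{2413th1} already showed that $p=U(x,1)$ satisfies $p^2-p+x=0$ with $p(0)=0$.

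For $V$, setting $y=1$ in \eqref{2413th1e2} gives the numerator $x(3-2+2x-2x-\sqrt{1-4x})=x(1-\sqrt{1-4x})$ and the denominator $2x$. Hence $V(x,1)=\frac{1-\sqrt{1-4x}}{2}$ as well. This is consistent with the remark in the proof of Theorem \ref{2413th1} that $V(x,1)=xC$, obtained by putting $y=1$ into \eqref{2413th1e4}.

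The only point requiring care is the sign and branch choice of the square roots. In particular, in the $U$ case one must confirm that $\sqrt{(x+\sqrt{1-4x})^2}=x+\sqrt{1-4x}$ as formal power series. This holds because the constant term of $x+\sqrt{1-4x}$ is $1$. Everything else is direct substitution.
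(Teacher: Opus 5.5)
Your proposal is correct and matches the paper's route: the paper also obtains the corollary by setting $y=1$ in the closed forms of Theorem~\ref{2413th1}, and the same specialization already gives $p^2-p+x=0$ in that theorem's proof. The only details needed are your simplification $1-4x+x^2+2x\sqrt{1-4x}=\bigl(x+\sqrt{1-4x}\bigr)^2$ for $U$ and the cancellation of the factor $x$ in numerator and denominator for $V$.
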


For the remainder of this section, we focus on the pattern 3142. Let $w_n=p_n(3142)$ for $n \geq 1$.   Given $n \geq 2$ and $2 \leq i \leq n$, let $w_{n,i}$ enumerate those members of $\mathcal{W}_n=\mathcal{P}_{n}(3142)$ whose second letter is $i$.  Note that $w_n=\sum_{i=2}^nw_{n,i}$ for $n \geq 2$, with $w_1=1$.

The $w_{n,i}$ are given recursively as follows.

\begin{lemma}\label{3142lem1}
If $n \geq 3$, then
\begin{equation}\label{3142lem1e1}
w_{n,i}=w_{i-2}w_{n-i+2}, \qquad 3 \leq i \leq n,
\end{equation}
with $w_{n,2}=w_{n-1}$ for $n\geq 2$.
\end{lemma}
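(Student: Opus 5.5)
The idea is to decompose $\pi\in\mathcal{W}_n$ according to the position of the letter $2$. We can then split off the letters of $[2,i-1]$ as a block and compress the remaining letters into a smaller flattened partition.

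\emph{The case $i=2$.} Write $\pi=1\pi'$. The leading $1$ cannot take part in an occurrence of $3142$, since it could only play the role of the first letter, which is the `3'. Hence $\pi$ avoids $3142$ if and only if $\pi'$ does. Deleting $1$ and subtracting one from every remaining letter gives a bijection onto $\mathcal{W}_{n-1}$: the word $\pi'$ starts with its smallest letter $2$, and its descent bottoms are still increasing. This gives $w_{n,2}=w_{n-1}$.

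\emph{The case $3\le i\le n$.} Let $\pi\in\mathcal{W}_n$ start $1i$.
\begin{itemize}
\item The letter $2$ is not in position two, so it is a descent bottom, and it must be the first one. Hence everything before $2$ is the increasing first run $1\,i\,A$, where $A$ consists of letters larger than $i$.
\item Take any letter $c\in[3,i-1]$ and any letter $b>i$ with $2$ before $b$ before $c$. Then $i,2,b,c$ is an occurrence of $3142$.
\item Therefore, after $2$ all letters of $[2,i-1]$ precede all remaining letters greater than $i$. So $\pi=1\,i\,A\,B\,C$, where $B$ is a permutation of $[2,i-1]$ beginning with $2$ and $C$ consists of the remaining letters greater than $i$.
\end{itemize}

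I then associate two smaller objects with $\pi$.
\begin{itemize}
\item The word $B$ (equivalently $1B$ with its leading $1$ removed) standardizes to a member of $\mathcal{W}_{i-2}$.
\item The word $1\,A\,i\,C$ standardizes to $1\,A'\,2\,C'$, a word of length $n-i+2$. It lies in $\mathcal{P}_{n-i+2}$ because its descent bottoms are $2$ followed by those of $C$, which increase.
\end{itemize}
Conversely, every $\sigma\in\mathcal{W}_{n-i+2}$ has the form $1A'2C'$ with $A'$ increasing, since $2$ is either the second letter or the first descent bottom. Inflating the letter $2$ of $\sigma$ into $i$ followed by a copy of $B$ recovers $\pi$. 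This gives a bijection between the $\pi$ in question and the pairs $(\tau,\sigma)\in\mathcal{W}_{i-2}\times\mathcal{W}_{n-i+2}$, provided avoidance is preserved in both directions. It then yields $w_{n,i}=w_{i-2}w_{n-i+2}$. Note that in the construction the letter $i$ moves from just after $1$ to the slot between $A$ and $B$.

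The main obstacle is this avoidance check. I must show that $1\,i\,A\,B\,C$ avoids $3142$ exactly when $B$ and $1\,A\,i\,C$ both do. The check will use the following facts.
\begin{itemize}
\item Letters of $B$ are smaller than all of $A\cup C\cup\{i\}$.
\item Letters of $B$ lie to the right of $A$ and $i$, and to the left of $C$.
\item $A$ is increasing.
\end{itemize}
For necessity, an occurrence of $3142$ inside $B$ is inherited by $\pi$. For $1\,A\,i\,C$, an occurrence using $i$ can only have $i$ as the `1', and the letter $2$ in $\pi$ can replace it.

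For sufficiency, I will run a case analysis on how many pattern letters come from $B$. The only way letters of $B$ can mix with large letters is as the `1', or as the `1' and `2' together. In the latter case, the `3' and `4' would have to be large letters to the left of $B$, which are $i$ and the increasing run $A$, and this is impossible, since the `3' precedes the `1' but the `4' must follow the `1'. If only the `1' comes from $B$, the remaining three letters are large letters, and the occurrence transfers to $1\,A\,i\,C$ with $i$ playing the `1'. This uses that $i$ is smaller than every letter of $A\cup C$. I also need to verify that the cases in which $i$ sits before $A$ in $\pi$ but after $A$ in $\sigma$ cause no discrepancy. Within $A\cup\{i\}$ the relative order of these letters is increasing in both words, so such cases should reduce to the ones above.
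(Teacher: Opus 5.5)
Your proof is correct and follows the paper's route. The paper writes $\pi=1i\alpha2\beta\gamma$, forces $[3,i-1]$ into $\beta$ via the occurrence $i2yz$, and pairs $2\beta$ with $i\alpha x\gamma$ (with $x$ relabeled as the second smallest letter); that second factor is order-isomorphic to your $1\,A\,i\,C$. One fix is needed: your closing claim that $A\cup\{i\}$ is increasing in both words is false for $1\,A\,i\,C$, where $A$ precedes the smaller $i$. The correct reason those cases are harmless is that $i$ lies in no occurrence of $3142$ in $\pi$: preceded only by $1$, it could only play the `3', which would put the `4' in $C$ and the `2' in $[3,i-1]$ after it, contradicting your block structure.
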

\begin{proof}
Let $\mathcal{W}_{n,i}$ denote the subset of $\mathcal{W}_n$ whose members have second letter $i$ for $2 \leq i \leq n$.  Then members of  $\mathcal{W}_{n,2}$ are synonymous with those in $\mathcal{W}_{n-1}$, upon deletion of 2, which implies the initial condition for $i=2$.  Suppose now $\pi \in \mathcal{P}_n$ is expressible as
\begin{equation}\label{3142lem1e2}
\pi=1i\alpha2\beta\gamma,
\end{equation}
for some $i \in [3,n]$, where $\alpha$, $\beta$ or $\gamma$ may be empty, $\alpha$ is a subset of $[i+1,n]$ in increasing order, and the leftmost letter in $[i+1,n]$ occurring to the right of 2 within $\pi$ (if it exists) coincides with the first letter of $\gamma$.  Let $x$ denote the final letter of $2\beta$.  Then we have that $\pi$ of the form \eqref{3142lem1e2} avoids the pattern 3142 if and only if the subsequences $2\beta$ and $i\alpha x\gamma$ both avoid 3142, with $2\beta$ consisting of the letters in $[2,i-1]$. To see this, first note that no letter in $[3,i-1]$ can occur in $\gamma$,  whence these letters must all lie in $\beta$, for otherwise, there would be a 3142 as witnessed by $i2yz$, where $y$ denotes the first letter of $\gamma$ and $z$ is some letter in $\gamma$ that is less than $i$. Further, the letters in $\beta$ all being smaller than those in $\gamma$ rules out an occurrence of 3142 starting in $\beta$ and ending in $\gamma$.  Finally, by requiring that $i\alpha x \gamma$ avoid 3142, one rules out any occurrences of the pattern whose first letter lies to the left of 2 or to the right of $\beta$.

To enumerate members of $\mathcal{W}_{n,i}$ where $3 \leq i \leq n$, first note that there are clearly $w_{i-2}$ possibilities for $2\beta$.  Viewing $x$ as the second smallest letter within the 3142-avoiding flattened partition $i\alpha x\gamma$, one has that it represents an arbitrary member of $\mathcal{W}_{n-i+2}$.  Since the standardized forms of $2\beta$ and $i\alpha x\gamma$ in \eqref{3142lem1e2} may be chosen independently of one another, the equality \eqref{3142lem1e1} follows, which completes the proof.
\end{proof}

Let $w_n(y)=\sum_{i=2}^nw_{n,i}y^{i-2}$ for $n \geq 2$ and $W(x,y)=\sum_{n\geq 2}w_n(y)x^n$.

\begin{theorem}\label{3142t1}
We have
\begin{equation}\label{3142t1e1}
W(x,y)=\frac{1-(1+2x)\sqrt{1-4x}-(1-2x)\sqrt{1-4xy}+\sqrt{(1-4x)(1-4xy)}}{4},
\end{equation}
and, in particular, $\sum_{n\geq 2}p_n(3142)x^n=W(x,1)=x(C-1)$.
\end{theorem}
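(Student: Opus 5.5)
We translate Lemma \ref{3142lem1} directly into a functional equation for $W(x,y)$. Set $w(x)=\sum_{n\geq1}w_nx^n$, so that $W(x,1)=w(x)-x$ since $w_1=1$ and $W$ starts at $n=2$. Multiply \eqref{3142lem1e1} by $y^{i-2}x^n$ and sum over $n\geq3$ and $3\leq i\leq n$. Substituting $a=i-2\geq1$ and $b=n-i+2\geq2$ gives
$$\sum_{a\geq1}\sum_{b\geq2}w_aw_bx^{a+b}y^{a}=w(xy)\,(w(x)-x).$$
The term $i=2$ contributes $\sum_{n\geq2}w_{n-1}x^n=xw(x)$. Hence
$$W(x,y)=xw(x)+w(xy)\bigl(w(x)-x\bigr).$$
This is not a genuine kernel equation, because $W$ appears only on the left. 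So we first determine $w$ by setting $y=1$.

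At $y=1$, writing $w=w(x)$, we get $w-x=xw+w(w-x)$, that is, $w^2-w+x=0$. Hence $w=\frac{1-\sqrt{1-4x}}{2}=xC$. The minus root is forced by $w(0)=0$, and we should check consistency with the initial values $w_1=1$, $w_2=1$, $w_3=2$. This gives $W(x,1)=xC-x=x(C-1)$, which is the second claim, and it implies $p_n(3142)=C_{n-1}$.

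For the full formula we substitute $w(x)=\frac{1-s}{2}$ and $w(xy)=\frac{1-t}{2}$, with $s=\sqrt{1-4x}$ and $t=\sqrt{1-4xy}$. Then
$$W(x,y)=\frac{x(1-s)}{2}+\frac{(1-t)(1-2x-s)}{4}.$$
Expanding, $\frac{(1-t)(1-2x-s)}{4}=\frac{1-2x-s-t+2xt+st}{4}$. Adding $\frac{2x-2xs}{4}$ gives
$$\frac{1-(1+2x)s-(1-2x)t+st}{4},$$
which is \eqref{3142t1e1}. At $y=1$ this reduces to $(1-4x)^{1/2}$-expressions consistent with $x(C-1)$, which serves as a check.

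The only delicate point is the index bookkeeping in the double sum: the ranges $a\geq1$ and $b\geq2$ produce the factor $w(x)-x$ rather than $w(x)$. The case $i=2$ also has to be treated separately using $w_{n,2}=w_{n-1}$. Everything else is routine algebra.
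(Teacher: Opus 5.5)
Your proposal is correct and follows essentially the same route as the paper: translate Lemma~\ref{3142lem1} into a functional equation for $W(x,y)$, set $y=1$ to obtain a quadratic, solve it, and substitute back. The only difference is notational; writing $w(x)=W(x,1)+x$ turns your equation $W(x,y)=xw(x)+w(xy)\bigl(w(x)-x\bigr)$ into the paper's $W(x,y)-x^2=x(1+y)W(x,1)+W(x,1)W(xy,1)$, and it replaces the paper's quadratic $p^2-(1-2x)p+x^2=0$ with the slightly cleaner Catalan equation $w^2-w+x=0$.
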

\begin{proof}
By \eqref{3142lem1e1}, we have
\begin{align}
w_n(y)&=w_{n-1}+\sum_{i=3}^ny^{i-2}w_{i-2}w_{n-i+2}\notag\\
&=(1+y)w_{n-1}(1)+\sum_{i=2}^{n-2}y^iw_i(1)w_{n-i}(1), \qquad n \geq 3, \label{3142t1e2}
\end{align}
with $w_2(y)=1$. Multiplying both sides of \eqref{3142t1e2} by $x^n$, and summing over all $n \geq 3$, yields
\begin{align}
W(x,y)-x^2&=(1+y)\sum_{n\geq 3}w_{n-1}(1)x^n+\sum_{n\geq 4}x^n\sum_{i=2}^{n-2}y^iw_i(1)w_{n-i}(1) \notag\\
&=x(1+y)W(x,1)+\sum_{i\geq2}y^iw_i(1)\sum_{n\geq i+2}w_{n-i}(1)x^n\notag\\
&=x(1+y)W(x,1)+W(x,1)W(xy,1). \label{3142t1e3}
\end{align}
Taking $y=1$ in \eqref{3142t1e3} implies $p=W(x,1)$ satisfies $p^2-(1-2x)p+x^2=0$, and hence $p=x(C-1)$.  Substituting this back into \eqref{3142t1e3}, and simplifying, yields \eqref{3142t1e1} and completes the proof.
\end{proof}

\section{2143 and 2314}

We first consider the case of avoiding 2143. Given $n \geq 2$ and $2 \leq i \leq n$, let $a_{n,i}$ denote the number of members of $\mathcal{P}_n(2143)$ whose second letter is $i$.  Let $a_n=p_n(2143)$, and hence $a_n=\sum_{i=2}^na_{n,i}$ for $n \geq 2$, with $a_1=1$.  In order to write recurrences, we consider the restriction $a_{n,i}^*$ of $a_{n,i}$ enumerating only those members of $\mathcal{P}_n(2143)$ that start $1i2$, where $n \geq 3$ and $i \in [3,n]$.  For example, if $n=5$ and $i=3$, then we have $a_{5,3}=4$ (the enumerated partitions being 13245, 13425, 13452, 13524) and $a_{5,3}^*=1$ (accounted for by 13245).

The arrays $a_{n,i}$ and $a_{n,i}^*$ satisfy the following system of recurrences.

\begin{lemma}\label{2143lem1}
If $n \geq 3$, then
\begin{equation}\label{2143lem1e1}
a_{n,i}=\sum_{j=0}^{n-i}\binom{n-i}{j}a_{n-j,i}^*, \qquad 3 \leq i \leq n,
\end{equation}
with $a_{n,2}=a_{n-1}$ for all $n \geq 2$.  If $n \geq 5$, then
\begin{equation}\label{2143lem1e2}
a_{n,i}^*=\sum_{m=0}^{n-i}a_{n-m-1,i-1}^*+\sum_{k=4}^{i-1}\sum_{j=0}^{i-k-1}\sum_{m=0}^{n-i}\binom{i-k-1}{j}a_{n-j-m-2,k-1}^*, \quad 4 \leq i \leq n-1,
\end{equation}
with $a_{n,3}^*=1$ and $a_{n,n}^*=a_{n-2}$ for $n \geq 3$.
\end{lemma}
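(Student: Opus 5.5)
We prove the three parts of the lemma in turn: the relation \eqref{2143lem1e1} by a deletion bijection, then the boundary values $a_{n,2}$, $a_{n,3}^*$, $a_{n,n}^*$, then \eqref{2143lem1e2} by a case split on what follows the prefix $1i2$. Throughout we use that members of $\mathcal{P}_n$ start with $1$ and have increasing descent bottoms.

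\textbf{The relation \eqref{2143lem1e1}.} Let $\pi\in\mathcal{P}_n(2143)$ have second letter $i\geq 3$. Then $2$ is not adjacent to $1$, so it is a descent bottom. Since descent bottoms increase and $2$ is the smallest letter after $1$, it is the first descent bottom. Hence $\pi=1i\alpha2\cdots$, where $\alpha\subseteq[i+1,n]$ is increasing; put $j=|\alpha|$. We show that deleting $\alpha$ and standardizing is a bijection onto the members counted by $a^*_{n-j,i}$, for each of the $\binom{n-i}{j}$ choices of the letters of $\alpha$.
\begin{itemize}
\item Deleting $\alpha$ keeps the flattened-partition property. Conversely, inserting any increasing set of letters from $[i+1,n]$ between $i$ and $2$ creates only the descent with bottom $2$, which is allowed.
\item A letter $a\in\alpha$ cannot play the `1' of a $2143$, nor the `4' or `3', because only $1$ and $i$ precede it.
\item If $a$ plays the `2', then it occurs as $a\,2\,c\,d$ with $c>d>a$. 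Replacing $a$ by $i<a$ gives the occurrence $i\,2\,c\,d$ in the shorter word.
\end{itemize}
So $\pi$ avoids $2143$ if and only if its image does, which gives \eqref{2143lem1e1}.

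\textbf{Boundary values.} We have $a_{n,2}=a_{n-1}$ because the letter $2$ right after $1$ can be deleted. For $a^*_{n,3}=1$: in $\pi=132\sigma$, every letter of $\sigma$ exceeds $3$, and any descent $cd$ in $\sigma$ would give the occurrence $3\,2\,c\,d$, so $\sigma$ must be increasing. For $a^*_{n,n}=a_{n-2}$: in $1n2\sigma$ the letter $n$ can only play the `4', which needs two smaller letters before it, but only $1$ precedes it. So $n$ is irrelevant, and deleting $1n$ leaves an arbitrary avoider of length $n-2$ starting with $2$, i.e.\ one starting with $1$ after standardizing.

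\textbf{The relation \eqref{2143lem1e2}.} Let $\pi=1i2\sigma$ with $4\le i\le n-1$. The prefix $i2$ is a ready-made `21', so $\sigma$ contains no descent $cd$ with $d>i$. Guided by the shape of the formula, we plan to split on the structure of $\sigma$ before the letter $3$ (the next descent bottom if it is not adjacent to $2$), with $m\in[0,n-i]$ counting the letters of $[i+1,n]$ in an increasing block placed relative to $2$ and $3$.
\begin{itemize}
\item \emph{First sum:} $3$ comes right after $2$ and its preceding block of $m$ large letters. Deleting $2$ and the block and standardizing sends $\pi$ to a word $1\,(i-1)\,2\cdots$ of length $n-m-1$. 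This gives $\sum_m a^*_{n-m-1,i-1}$.
\item \emph{Triple sum:} $3$ is reached later through a letter $k\in[4,i-1]$. Then $\sigma$ starts with an increasing run containing $k$, together with $j$ chosen letters from the $i-k-1$ letters of $[k+1,i-1]$ and $m$ large letters. Removing these $j+m$ letters and $2$, then standardizing, should produce a word counted by $a^*_{n-j-m-2,k-1}$ with second letter $k-1$. The free choice of the $j$ letters supplies $\binom{i-k-1}{j}$.
\end{itemize}

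\textbf{Main obstacle.} The hard step is this last decomposition. We must pin down exactly which letters may be removed freely, and verify both directions, namely that any $2143$ in $\pi$ is transported to one in the reduced word and conversely, in the same style as the proof of Lemma~\ref{2413lem1}. Our first attempt will again be the substitution trick used for \eqref{2143lem1e1}: replace an offending removed letter by a smaller letter that still precedes $2$ or $3$, producing an occurrence in the reduced word. We would check the resulting identity on small cases, for example $a^*_{5,4}$ and $a^*_{6,4}$, to fix the index ranges.
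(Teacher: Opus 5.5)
\textbf{There is a genuine gap: the triple-sum case of \eqref{2143lem1e2}.} Your treatment of \eqref{2143lem1e1}, of the boundary values, and of the first sum in \eqref{2143lem1e2} matches the paper. The paper splits on the fourth letter: a $3$ gives $m=0$, and a fourth letter exceeding $i$ must be $i+1$, giving $\pi=1i2(i+1)\cdots(i+m)3\pi'$. You should say explicitly that the block is forced to be $(i+1)\cdots(i+m)$, because $i2$ makes all letters of $[i+1,n]$ occur in increasing order. This is why no binomial factor appears there.

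The triple sum is the substance of the lemma, and you leave it as an open obstacle. The reduction you do sketch does not work. Deleting $\alpha$, $\beta$ and the letter $2$ from $\pi=1\,i\,2\,k\,\alpha\,\beta\,3\,\pi'$ leaves $1\,i\,k\,3\,\pi'$. This word has length $n-j-m-1$, and its standardized second letter is $i-1-j$ rather than $k-1$. It is not even a flattened partition, since $i>k>3$ gives two consecutive descents. For instance, $\pi=152436$ would go to $14325$, whereas the correct image is $1324$, counted by $a^*_{4,3}$.

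\textbf{The missing idea is that the second letter $i$ (together with $1$) is redundant in this case.} Proving this requires trading a \emph{pair} of letters, not a single one as in your substitution trick. Since $2$ precedes $k$, you must replace $i\,2$ by $k\,3$.
\begin{itemize}
\item \emph{The letter $i$.} It can only play the `2' of a $2143$, whose `4' and `3' are then letters $a>b>i$. Because $\beta=(i+1)\cdots(i+m)$ is increasing and consists of the smallest letters above $i$, both $a$ and $b$ lie in $\pi'$. So $k\,3\,a\,b$ is also an occurrence.
\item \emph{Letters of $\alpha$.} A letter $x\in\alpha$ can only play the `2', or the `1' with $i$ as the `2'; the latter is covered by the previous item. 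In an occurrence $x\,y\,c\,d$, the letter $y$ is $3$ or lies in $\pi'$, so $k\,3\,c\,d$ is an occurrence.
\item \emph{Letters of $\beta$ and the letter $2$.} These are harmless. Once $2\,k\,3\,\pi'$ avoids the pattern, all letters above $i$ occur in increasing order.
\end{itemize}
Hence $\pi$ avoids $2143$ if and only if $2\,k\,3\,\pi'$ does. The latter standardizes to an arbitrary word $1\,(k-1)\,2\cdots$ of length $n-j-m-2$, that is, an arbitrary member of $\mathcal{A}^*_{n-j-m-2,k-1}$. The factor $\binom{i-k-1}{j}$ comes from $\alpha$ being an arbitrary subset of $[k+1,i-1]$. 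This is in contrast with $\beta$, which is forced.
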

\begin{proof}
Let $\mathcal{A}_{n,i}$ and $\mathcal{A}_{n,i}^*$ denote the subsets of $\mathcal{A}_n=\mathcal{P}_n(2143)$ whose members are enumerated by $a_{n,i}$ and $a_{n,i}^*$, respectively.  That $a_{n,2}=a_{n-1}$ is clear, so assume $\pi \in \mathcal{A}_{n,i}$ for some $3 \leq i \leq n$.  Considering the number $j$ of elements in $[i+1,n]$ occurring between $i$ and $2$ within $\pi$ yields \eqref{2143lem1e1}, as deletion of these $j$ elements is seen to result in an arbitrary member of $\mathcal{A}_{n-j,i}^*$ after standardization. Clearly, we have $a_{n,3}^*=1$ and $a_{n,n}^*=a_{n-2}$ for all $n \geq 3$, as $\mathcal{A}_{n,3}^*=\{13245\cdots n\}$ with members of $\mathcal{A}_{n,n}^*$ being synonymous with those in $\mathcal{A}_{n-2}$, upon deletion of the first two letters.

So assume $\pi \in \mathcal{A}_{n,i}^*$, where $4 \leq i \leq n-1$.  We consider cases based on the fourth letter $k$.  If $k=3$, then we may delete $k$ resulting in a member of $\mathcal{A}_{n-1,i-1}^*$.  If $k>i$, then we must have $k=i+1$, for otherwise $i2k(k-1)$ is an occurrence of 2143.  Furthermore, $\pi$ in this case must be of the form $\pi=1i2(i+1)\cdots(i+m)3\pi'$ for some $m \geq 1$, as the members of $[i+1,n]$ must occur in increasing order from left to right within $\pi$.  It is seen that each of the letters in $[i+1,i+m]$ is extraneous concerning the avoidance of 2143 within $\pi$, as is 3.  Deletion of these letters produces an arbitrary member of $\mathcal{A}_{n-m-1,i-1}^*$ for each $m$, and thus considering all possible $m$ yields $\sum_{m=1}^{n-i}a_{n-m-1,i-1}^*$ members of $\mathcal{A}_{n,i}^*$ for which $k=i+1$.  Combining with the prior case where $k=3$ then yields the first summation expression on the right side of \eqref{2143lem1e2}.

Finally, suppose $k \in [4,i-1]$.  In this case, we have $\pi=1i2k\alpha\beta3\pi'$, where $\alpha$ is a possibly empty sequence of letters in $[k+1,i-1]$ in increasing order and $\beta=(i+1)(i+2)\cdots(i+m)$ for some $m \geq0$.  We may delete the letters in $\alpha$ (due to their being preceded by $k$ and $2$) and also those in $\beta$ (due to their comprising an interval starting with $i+1$). Additionally, we have that the $i$ (and hence also 1) may be deleted, for if $i2ab$ is an occurrence of 2143 for some $a>b>i$, then so is $k3ab$.  This leaves the flattened partition $2k3\pi'$, which when standardized is seen to be an arbitrary member of $\mathcal{A}_{n-j-m-2,k-1}^*$, where $j=|\alpha|$.  Considering all possible $k$, $j$ and $m$ then accounts for the second summation in \eqref{2143lem1e2} and completes the proof.
\end{proof}

Let $A(x,v)=\sum_{n\geq2}\sum_{i=2}^na_{n,i}v^{i-2}x^n$
and $B(x,v)=\sum_{n\geq3}\sum_{i=3}^na_{n,i}^*v^{i-3}x^n$.

\begin{lemma}\label{2143lem2}
We have
\begin{align}
A(x,v)&=x^2+xA(x,1)+v(1-x)^2B\left(\frac{x}{1-x},v(1-x)\right)\label{2143lem2e1}
\end{align}
and
\begin{align}
&\left(1-\frac{vx}{1-x}-\frac{v^2x^2}{(1-x)(1-v-vx)}\right)B(x,v)\notag\\
&=\frac{x^3}{1-x}-vx^4+\frac{x^2}{v}(1-vx)A(vx,1)-x^2(1-vx)^2\left(\frac{1}{(1-x)(1-v-vx)}+\frac{1}{v}\right)B\left(\frac{vx}{1-vx},1-vx\right).\label{2143lem2e2}
\end{align}
\end{lemma}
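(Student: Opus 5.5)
The plan is to translate the recurrences of Lemma \ref{2143lem1} directly into generating functions. I will use the substitution $x\mapsto x/(1-x)$ to absorb the binomial sums, and geometric sums in $m$ to absorb the sums $\sum_{m=0}^{n-i}$.

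For \eqref{2143lem2e1}, split $A(x,v)$ into the $i=2$ part and the $i\ge3$ part. Since $a_{n,2}=a_{n-1}$, and $a_{2,2}=1$ accounts for the $x^2$, the $i=2$ part gives $x^2+x\sum_{n\ge2}a_{n}x^{n}$. As $a_n=\sum_i a_{n,i}$ for $n\ge2$, this is $x^2+xA(x,1)$. For $i\ge3$ I use \eqref{2143lem1e1}. Writing $n=N+j$, where $N=n-j$, and summing first over $n\ge N$ at fixed $N,i$, I have
$$\sum_{j\ge0}\binom{N-i+j}{j}x^{N+j}=\frac{x^N}{(1-x)^{N-i+1}}.$$
Hence the $i\ge3$ part equals
$$\sum_{N,i}a^*_{N,i}v^{i-2}x^N(1-x)^{i-N-1}=v(1-x)^2\sum a^*_{N,i}\Big(\tfrac{x}{1-x}\Big)^N\big(v(1-x)\big)^{i-3},$$
which is exactly $v(1-x)^2B\bigl(\frac{x}{1-x},v(1-x)\bigr)$.

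For \eqref{2143lem2e2}, I multiply \eqref{2143lem1e2} by $v^{i-3}x^n$ and sum over $n\ge5$ and $4\le i\le n-1$. The boundary terms $i=3$ (value 1) and $i=n$ (value $a_{n-2}$) are added back separately. The $i=3$ terms give $\sum_{n\ge3}x^n=x^3/(1-x)$. The diagonal terms give $\sum_n a_{n-2}v^{n-3}x^n$, which is expressible through $\frac{x^2}{v}A(vx,1)$ after correcting the small-$n$ terms; those corrections produce the $-vx^4$ and the factor $(1-vx)$.

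In the first sum, $\sum_{m=0}^{n-i}a^*_{n-m-1,i-1}$, I set $N=n-m-1$ and $I=i-1$. The constraint $m\le n-i$ becomes $N\ge I$, with $m$ free and $i\le n-1$. Summing over $m$ yields a geometric factor $x/(1-x)$ and a factor $v$, giving $\frac{vx}{1-x}B$. However, the constraint $i\le n-1$, i.e.\ $m\le n-i-1$ in the truncated range, removes the diagonal $N=I$ terms, where $a^*_{I,I}=a_{I-2}$. These must be subtracted, and they produce an $A$- or $B$-type term evaluated on the diagonal. The triple sum is handled the same way. Set $N=n-j-m-2$ and $K=k-1$. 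The sum over $j$ of $\binom{i-k-1}{j}x^j$ combined with the sum over $i$ produces $\sum_{r\ge0}(v(1+x))^r$-type factors, namely $1/(1-v-vx)$. The sum over $m$ gives $1/(1-x)$, and together these yield the $\frac{v^2x^2}{(1-x)(1-v-vx)}B$ term.

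The main obstacle is the boundary and diagonal bookkeeping. The upper limits $m\le n-i$ and $i\le n-1$ tie $n$ to $i$, so the naive geometric sums overcount the terms where the index of $a^*$ lies on its diagonal ($N=K$ or $N=I$). My plan is to compute the overcount as a sum of the diagonal values $a^*_{N,N}$ weighted by powers of $vx$ and $v(1+x)$. I will then express $\sum_N a^*_{N,N}t^N$ through the specialization $B(\frac{vx}{1-vx},1-vx)$. This works because in \eqref{2143lem2e1} with $v$ chosen so that $v(1-x)$ collapses the $i$-dependence onto the diagonal, the $x^N v^{N-3}$ weighting is recovered. This accounts for the last term with coefficient $\frac{1}{(1-x)(1-v-vx)}+\frac1v$. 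Finally, I will verify the small cases $n\le4$ against the explicit initial values to fix the polynomial terms.
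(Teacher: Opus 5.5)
Your derivation of \eqref{2143lem2e1} is fine. So are your handling of the boundary terms $i=3$ and $i=n$ and your treatment of the first sum $\sum_{m=0}^{n-i}a^*_{n-m-1,i-1}$: there the only excluded terms are $m=0$ with $N=I$, and they contribute $-vx^4-x^3A(vx,1)$. This exclusion, not a small-$n$ correction, is where $-vx^4$ and the factor $1-vx$ come from, since $\sum_{n\ge4}a_{n-2}v^{n-3}x^n=\frac{x^2}{v}A(vx,1)$ exactly.

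The gap is in the triple sum. Put $N=n-j-m-2$, $K=k-1$, and let $q=i-k-1-j$ be the number of letters of $[k+1,i-1]$ not placed in $\alpha$. Then $m\le n-i$ is equivalent to $N\ge K+q$, and $i\le n-1$ forces $m\ge1$ when $N=K+q$. So the truncation couples $N-K$ with $q$. If you sum over $j$ at fixed $i$ to get $(1+x)^{i-k-1}$ and then let $i$ run freely, you overcount, for \emph{every} pair $(N,K)$, all terms with $q>N-K$. These are not diagonal terms. For instance, $N=4$, $K=3$, $i-k-1=2$, $j=m=0$ appears in your naive sum but corresponds to $n=6$, $i=7$, which is not allowed. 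Hence the correction cannot be a weighted sum of the diagonal values $a^*_{N,N}$, whose generating function is just $t^3+t^2A(t,1)$. Your explanation of where $B\bigl(\frac{vx}{1-vx},1-vx\bigr)=\sum_{N,K}a^*_{N,K}(vx)^N(1-vx)^{K-N-3}$ comes from is therefore not valid: that series involves every $a^*_{N,K}$, not just the diagonal.

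What is missing is to organize the sum by $q$ instead of by $i$. For fixed $q$, $\sum_{j\ge0}\binom{q+j}{j}(vx)^j=(1-vx)^{-q-1}$. Then $q$ ranges over the finite interval $0\le q\le N-K$, so with $m\ge0$ you get $\frac{x^2}{1-x}\sum_{N,K}a^*_{N,K}v^{K-1}x^N\,\frac{1-r^{N-K+1}}{1-v-vx}$, where $r=\frac{v}{1-vx}$. The three pieces are:
\begin{itemize}
\item the constant term $1$ gives $\frac{v^2x^2}{(1-x)(1-v-vx)}B(x,v)$, so your leading term is correct;
\item the tail $r^{N-K+1}$ gives $-\frac{x^2(1-vx)^2}{(1-x)(1-v-vx)}B\bigl(\frac{vx}{1-vx},1-vx\bigr)$;
\item removing the $m=0$ terms at $q=N-K$ gives a further $-\frac{x^2}{v}(1-vx)^2B\bigl(\frac{vx}{1-vx},1-vx\bigr)$.
\end{itemize}
This is how the coefficient $\frac{1}{(1-x)(1-v-vx)}+\frac1v$ arises in the paper's proof. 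Without the finite geometric sum in $q$, your plan cannot produce it.
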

\begin{proof}
By \eqref{2143lem1e1}, we have
$$A(x,v)-\sum_{n\geq 3}a_{n-1}x^n-x^2=\sum_{n\geq3}x^n\sum_{i=3}^nv^{i-2}\sum_{j=0}^{n-i}\binom{n-i}{j}a_{n-j,i}^*,$$
and hence
\begin{align*}
A(x,v)-xA(x,1)-x^2&=\sum_{i\geq3}v^{i-2}\sum_{n\geq i}x^n\sum_{j=i}^n\binom{n-i}{n-j}a_{j,i}^*=\sum_{i\geq3}\sum_{j\geq i}a_{j,i}^*v^{i-2}\sum_{n\geq j}\binom{n-i}{j-i}x^n\\
&=\sum_{i\geq3}\sum_{j\geq i}a_{j,i}^*v^{i-2}\cdot \frac{x^i}{(1-x)^{j-i+1}}=\sum_{j\geq3}\frac{x^j}{(1-x)^j}\sum_{i=3}^ja_{j,i}^*v^{i-2}(1-x)^{i-1}\\
&=v(1-x)^2B\left(\frac{x}{1-x},v(1-x)\right),
\end{align*}
which yields \eqref{2143lem2e1}.

By \eqref{2143lem1e2}, we have
\begin{align*}
&B(x,v)-\frac{x^3}{1-x}-\frac{x^2}{v}A(vx,1)=\sum_{n\geq5}\sum_{i=4}^{n-1}a_{n,i}^*v^{i-3}x^n\\
&=\sum_{n\geq5}\sum_{i=4}^{n-1}\Bigg(
  a_{n-1, i-1}^*
  + \sum_{m=1}^{n-i} a^*_{n-m-1, i-1}
  + \sum_{k=4}^{i-1}
      \sum_{j=0}^{i-k-1}
      \sum_{m=0}^{n-i}
        \binom{i-k-1}{j}
        a_{n-j-m-2, k-1}^*\Bigg)v^{i-3}x^{n}\\
&=vx\left(B(x,v)-\frac{x^2}{v}A(vx,1)-x^3\right)\\
&\quad+\sum_{n\geq5}
\sum_{i=4}^{n-1}
\Bigg(
    \sum_{m=1}^{\,n-i}
        a_{n-m-1, i-1}^* v^{i-3}x^{n}
    +
    \sum_{k=4}^{i-1}
        \sum_{j=0}^{i-k-1}
        \sum_{m=0}^{n-i}
            \binom{i-k-1}{j}
            a_{n-j-m-2, k-1}^*
            v^{i-3}x^{n}\Bigg).\\
\end{align*}
Working separately on the last two multi-sum expressions, we get
\begin{align*}
&\sum_{n\geq5}\sum_{i=4}^{n-1}\sum_{m=1}^{\,n-i}a_{n-m-1, i-1}^* v^{i-3}x^{n}=\sum_{i\geq4}\sum_{n\geq i+1}\sum_{m=i}^{n-1}a_{m-1,i-1}^*v^{i-3}x^n\\
&=\sum_{i\geq4}\sum_{m\geq i}a_{m-1,i-1}^*v^{i-3}\cdot\frac{x^{m+1}}{1-x}=\frac{x}{1-x}\sum_{m\geq4}\sum_{i=4}^ma_{m-1,i-1}^*v^{i-3}x^m=\frac{vx^2}{1-x}B(x,v)
\end{align*}
and
\begin{align*}
&\sum_{n\geq5}\sum_{i=4}^{n-1}\sum_{k=4}^{i-1}\sum_{j=0}^{i-k-1}\sum_{m=0}^{n-i}\binom{i-k-1}{j}a_{n-j-m-2, k-1}^*v^{i-3}x^{n}\\
&=\sum_{k\geq4}\sum_{i\geq1}\sum_{n\geq i+k+1}\sum_{j=0}^{i-1}\sum_{m=0}^{n-i-k}\binom{i-1}{j}a_{n-j-m-2,k-1}^*v^{i+k-3}x^n\\
&=\sum_{k\geq3}\sum_{i\geq1}\sum_{n\geq k+2}\sum_{j=0}^{i-1}\sum_{m=0}^{n-k-1}\binom{i-1}{j}a_{n+i-j-m-2,k}^*v^{i+k-2}x^{n+i}\\
&=\sum_{k\geq3}\sum_{n\geq k+2}\sum_{j\geq0}\sum_{i\geq1}\sum_{m=0}^{n-k-1}\binom{i+j-1}{j}a_{n+i-m-2,k}^*v^{i+j+k-2}x^{n+i+j}\\
&=\sum_{k\geq3}\sum_{i\geq1}\sum_{n\geq i+k+2}\sum_{m=0}^{n-i-k-1}a_{n-m-2,k}^*v^{i+k-2}x^n\sum_{j\geq0}\binom{i+j-1}{j}(vx)^j\\
&=\sum_{k\geq3}\sum_{i\geq1}\sum_{n\geq i+k+2}\sum_{m=i+k-1}^{n-2}\frac{a_{m,k}^*v^{i+k-2}x^n}{(1-vx)^i}.
\end{align*}

This last sum may be written as the difference of two sums as
\begin{align*}
&\sum_{k\geq3}\sum_{i\geq1}\sum_{n\geq i+k+1}\sum_{m=i+k-1}^{n-2}\frac{a_{m,k}^*v^{i+k-2}x^n}{(1-vx)^i}-\sum_{k\geq3}\sum_{i\geq1}\frac{a_{i+k-1,k}^*v^{i+k-2}x^{i+k+1}}{(1-vx)^i}\\
&=\sum_{k\geq3}\sum_{i\geq1}\sum_{m\geq i+k-1}\frac{a_{m,k}^*v^{i+k-2}x^{m+2}}{(1-x)(1-vx)^i}-\sum_{k\geq3}\sum_{i \geq k}\frac{a_{i,k}^*v^{i-1}x^{i+2}}{(1-vx)^{i-k+1}}\\
&=\sum_{k\geq3}\sum_{m\geq k}\frac{a_{m,k}^*v^{k-1}x^{m+2}}{1-x}\cdot\frac{1-\left(\frac{v}{1-vx}\right)^{m-k+1}}{1-v-vx}-\frac{x^2}{v}\sum_{i\geq3}\sum_{k=3}^ia_{i,k}^*\left(\frac{vx}{1-vx}\right)^i(1-vx)^{k-1}\\
&=\frac{v^2x^2}{(1-x)(1-v-vx)}\sum_{m\geq3}\sum_{k=3}^ma_{m,k}^*v^{k-3}x^m\left(1-\left(\frac{v}{1-vx}\right)^{m-k+1}\right)-\frac{x^2}{v}(1-vx)^2B\left(\frac{vx}{1-vx},1-vx\right)\\
&=\frac{v^2x^2}{(1-x)(1-v-vx)}\left(B(x,v)-\frac{(1-vx)^2}{v^2}B\left(\frac{vx}{1-vx},1-vx\right)\right)-\frac{x^2}{v}(1-vx)^2B\left(\frac{vx}{1-vx},1-vx\right).
\end{align*}
Combining with the prior computations, we get
\begin{align*}
&B(x,v)-\frac{x^3}{1-x}-\frac{x^2}{v}A(vx,1)\\
&=\frac{vx}{1-x}B(x,v)-x^3A(vx,1)-vx^4+\frac{v^2x^2}{(1-x)(1-v-vx)}B(x,v)\\
&\quad-x^2(1-vx)^2\left(\frac{1}{(1-x)(1-v-vx)}+\frac{1}{v}\right)B\left(\frac{vx}{1-vx},1-vx\right),
\end{align*}
which may be rewritten as \eqref{2143lem2e2}.
\end{proof}

It is possible to guess and then verify the solution to the system of functional equations \eqref{2143lem2e1} and \eqref{2143lem2e2}.  Note that reversing the steps of Lemma \ref{2143lem2} shows that this system has a unique power series solution.

\begin{theorem}\label{2143th}
We have
$$A(x,v)=\frac{x(1-\sqrt{1-4x})}{2}-\frac{vx^2(1-3x-(1-x)\sqrt{1-4vx})}{2(1-2x-v(1-x)^2)}$$
and
$$B(x,v)=-\frac{x^2(1-2x-\sqrt{1-4vx})}{2(1-v-x)}.$$
In particular, upon taking $v=1$, we have $p_n(2143)=C_{n-1}$ for all $n \geq 1$, with the cardinality of the subset of $\mathcal{P}_n(2143)$ whose members have third letter 2 given by $C_{n-2}$ for $n \geq 3$.
\end{theorem}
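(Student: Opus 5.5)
The strategy is guess-and-verify, as the paper has already signalled. By the remark after Lemma \ref{2143lem2}, reversing the steps of that lemma shows the system \eqref{2143lem2e1}--\eqref{2143lem2e2} has a unique power series solution. So it suffices to check that the displayed $A(x,v)$ and $B(x,v)$ are power series, with the correct low-order terms, satisfying both equations.

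I would first pin down $A(x,1)$. Setting $v=1$ in the claimed $A$, the denominator $1-2x-(1-x)^2=-x^2$ cancels against the $x^2$ in the numerator. This gives
$$A(x,1)=\frac{x(1-\sqrt{1-4x})}{2}+\frac{1-3x-(1-x)\sqrt{1-4x}}{2}=\frac{1-3x-(1-2x)\sqrt{1-4x}}{2},$$
which should simplify to $x(C-1)$. I would confirm this by checking that it satisfies $p^2-(1-2x)p+x^2=0$, the same equation as in Theorem \ref{3142t1}.

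Next, I would verify \eqref{2143lem2e2}, which involves only $B$ and $A(vx,1)$.
\begin{itemize}
\item Substitute $x\mapsto vx/(1-vx)$ and $v\mapsto 1-vx$ into the claimed $B$. The key simplification is $1-4(1-vx)\frac{vx}{1-vx}=1-4vx$, so the inner radical is $\sqrt{1-4vx}$.
\item The factor $1-v-x$ becomes $vx-\frac{vx}{1-vx}=-\frac{v^2x^2}{1-vx}$.
\item Hence $B\bigl(\frac{vx}{1-vx},1-vx\bigr)$ is an explicit rational function of $v,x$ plus a multiple of $\sqrt{1-4vx}$.
\item $A(vx,1)$ has the same form, involving only $\sqrt{1-4vx}$.
\end{itemize}
Thus both sides of \eqref{2143lem2e2} lie in $\mathbb{Q}(x,v)\oplus\mathbb{Q}(x,v)\sqrt{1-4vx}$, and I would compare the rational part and the radical coefficient separately after clearing the denominators $(1-x)(1-v-vx)(1-v-x)$.

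Then I would verify \eqref{2143lem2e1}. Under $x\mapsto x/(1-x)$ and $v\mapsto v(1-x)$, again $4v(1-x)\cdot\frac{x}{1-x}=4vx$, so $B$ produces $\sqrt{1-4vx}$. The remaining pieces of \eqref{2143lem2e1} are $x^2$ and $xA(x,1)$, which involve only $\sqrt{1-4x}$, and the two radicals are matched independently. This reduces to a rational identity, where $1-v-x$ turns into a multiple of $1-2x-v(1-x)^2$. I expect the main obstacle to be this bookkeeping in the verification of \eqref{2143lem2e2}, where the kernel $1-\frac{vx}{1-x}-\frac{v^2x^2}{(1-x)(1-v-vx)}$ must be shown to combine correctly with the $B(\frac{vx}{1-vx},1-vx)$ term. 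I would do that step carefully, or by computer algebra, and also check the first few coefficients against Lemma \ref{2143lem1}, for example $a_{5,3}=4$ and $a_{5,3}^*=1$. I would also check that both expressions are genuine power series: the apparent poles at $1-v-x=0$ and $1-2x-v(1-x)^2=0$ should be removable, since the numerators vanish there.

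Finally, for the consequences, taking $v=1$ gives $A(x,1)=x(C-1)$, so $p_n(2143)=C_{n-1}$ for $n\ge2$, and $n=1$ is trivial. Members of $\mathcal{P}_n(2143)$ with third letter $2$ are exactly those counted by $a_{n,i}^*$ summed over $i$, so their generating function is
$$B(x,1)=\frac{x^2(1-2x-\sqrt{1-4x})}{2x}=\frac{x(1-2x-\sqrt{1-4x})}{2}=x^2(C-1).$$
This yields the count $C_{n-2}$ for $n\ge3$.
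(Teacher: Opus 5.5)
Your proposal is the paper's own argument. The paper also just guesses $(A,B)$, verifies it against \eqref{2143lem2e1}--\eqref{2143lem2e2}, and takes uniqueness from reversing Lemma \ref{2143lem2}.

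Your substitutions are correct. Both collapse the radical to $\sqrt{1-4vx}$, and $1-v-x$ becomes $-\frac{v^2x^2}{1-vx}$ and $\frac{1-2x-v(1-x)^2}{1-x}$ respectively. Both identities do check out.

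The bookkeeping in \eqref{2143lem2e2} is lighter than you expect. The kernel factors as $\frac{(1-v-x)(1-vx)}{(1-x)(1-v-vx)}$, which cancels the $1-v-x$ in $B$. The two right-hand terms carrying $\frac{1}{v}$ combine to exactly $vx^4$.

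One slip needs fixing. At $v=1$ the two pieces of $A$ sum to $\frac{1-2x-\sqrt{1-4x}}{2}$, not $\frac{1-3x-(1-2x)\sqrt{1-4x}}{2}$. The radical coefficients $-x$ and $-(1-x)$ add to $-1$, and the rational parts give $x+1-3x=1-2x$. Your expression begins $\frac{x}{2}-x^2+\cdots$, so it would fail your quadratic check. The corrected value is exactly $x(C-1)$. You also need it for $x^2+xA(x,1)$ to reproduce the first term of $A(x,v)$ in \eqref{2143lem2e1}.
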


We now consider the case of avoiding 2314. Given $n \geq 2$ with $i,j \in [n]$ and $i \neq j$, let $b_{n,i,j}$ denote the number of members of $\mathcal{B}_n=\mathcal{P}_n(2314)$ whose last two letters are $i,j$. Note that $b_n=\sum_{i=1}^n\sum_{j=1,j\neq i}^nb_{n,i,j}$ for all $n \geq 2$, where $b_n=p_n(2314)$.   Clearly, we have $b_{n,i,1}=b_{n,1,j}=0$ if $n \geq 3$ for all $i$ and $j$, with $b_{2,1,2}=1$, $b_{2,2,1}=0$.

The $b_{n,i,j}$ satisfy the following recurrence relations.

\begin{lemma}\label{2314lem1}
If $n \geq 3$, then
\begin{equation}\label{2314l1e1}
b_{n,i,j}=\sum_{k=0}^{i-j-1}\sum_{s=1}^{j-1}\binom{i-j-1}{k}b_{n-k-1,s,i-k-1}, \quad 2 \leq j< i \leq n,
\end{equation}
\begin{equation}\label{2314l1e2}
b_{n,i,i+1}=\sum_{s=1,s\neq i}^{n-1}b_{n-1,s,i}, \qquad 2 \leq i \leq n-1,
\end{equation}
and
\begin{equation}\label{2314l1e3}
b_{n,i,j}=\sum_{m=0}^1\sum_{s=1}^{i-1}b_{n-m-1,s,j-m-1}+\sum_{m=0}^1\sum_{r=j+1}^n\sum_{k=0}^{r-j-1}\sum_{s=1}^{i-1}\binom{r-j-1}{k}b_{n-k-m-2,s,r-k-m-2},
\end{equation}
for $2 \leq i<j-1 \leq n-1$.
\end{lemma}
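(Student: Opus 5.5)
The plan is to delete the last letter $j$ of $\pi\in\mathcal{B}_n$ with last two letters $i,j$, together with whatever other letters become extraneous, and to split into three cases according to whether $j<i$ (the last step is a descent), $j=i+1$, or $j>i+1$. Throughout we use the facts recorded earlier: members of $\mathcal{P}_n$ start with $1$, their descent bottoms increase from left to right, and there are never two consecutive descents. An occurrence of $2314$ ending at the final letter $j$ needs a pattern $231$ in the prefix all of whose letters exceed $j$.

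\emph{Case $j<i$.} Here $j$ is a descent bottom, so every letter of $[j-1]$ lies to the left of $j$. Write $\pi=\alpha s\,i\,j$, where $s$ is the letter preceding $i$; no consecutive descents forces $s<i$. Consider the letters of $[j+1,i-1]$. These lie left of $i$. Suppose $k$ of them are ``free'': they occur after all the relevant structure, behave like extra letters inserted into an increasing run before $i$, and each can be removed without creating or destroying a $2314$. Deleting $j$ and these $k$ letters, and standardizing, should leave a member of $\mathcal{B}_{n-k-1}$ ending in $s,\,i-k-1$ with $s\le j-1$. The choice of which $k$ letters of the $i-j-1$ available are free accounts for $\binom{i-j-1}{k}$. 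The delicate point is to make this precise. The safest version is probably that the letters of $[j+1,i-1]$ either appear as a consecutive block adjacent to $i$, or are forced to be non-participants because $j$ (the smallest relevant letter at the end) can then play the $4$ only for letters larger than it. I would verify the bijection by checking that any $2314$ created by reinserting $j$ uses letters $>j$ forming a $231$, and that the letters $>j$ all lie in the last run together with $i$.

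\emph{Case $j=i+1$.} An occurrence of $2314$ ending at $i+1$ would need a $231$ in letters $>i+1$ lying to its left, and these could be replaced by the same letters ending at $i$. So deleting $i+1$ and standardizing gives a bijection onto members of $\mathcal{B}_{n-1}$ with last letter $i$, with the penultimate letter $s$ arbitrary. This yields \eqref{2314l1e2} directly.

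\emph{Case $i<j-1$.} The letter $j$ extends the final ascending run, and $i+1,\dots,j-1$ all lie to the left. Split on $m\in\{0,1\}$, according to whether the letter $j-1$ (or the relevant neighbor) sits immediately before $i$ or must itself be deleted. Then split further on whether $j-1$ sits at the end of its run (the first sum) or is followed by a larger letter $r>j$ and then some free letters of $[j+1,r-1]$ (the second sum, with the binomial again counting free letters).

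The main obstacle I expect is this third case: correctly identifying which letters can be deleted so that both the flattened-partition property and $2314$-avoidance are preserved bijectively, and checking that the index shifts ($r-k-m-2$, $n-k-m-2$) match. I would first test all three formulas against small data, for example by brute-force listing $\mathcal{B}_5$, before writing out the sufficiency arguments.
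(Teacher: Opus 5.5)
Your overall shape (three cases by the relative order of $i$ and $j$, deleting extraneous letters, with binomials counting deleted increasing words) is right. However, the argument rests on an inverted observation, and the third case is misidentified.

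Since the last entry of $2314$ is its largest, an occurrence ending at the final letter $j$ needs a $231$ among letters \emph{smaller} than $j$, not larger. That is what makes $j$ deletable:
\begin{itemize}
\item if $j<i$, such a $231$ lies entirely before $i$, and $i>j$ can serve as the `4';
\item if $j=i+1$, then $i$ cannot belong to such a $231$ (it would have to be the `1', i.e., its smallest letter), so again $i$ can replace $j$.
\end{itemize}
Your case $j=i+1$ reaches the right conclusion for the wrong reason. The checks you propose for $j<i$ (``letters $>j$ forming a $231$'', ``the letters $>j$ all lie in the last run together with $i$'') are false.

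The structural fact you are missing in the case $j<i$ is that the run ending in $i$ begins at a descent bottom (or at $1$) smaller than $j$, since descent bottoms increase. Hence, with $s$ the last letter of $\pi$ in $[j-1]$, one has $\pi=\pi's\alpha ij$, where $\alpha$ is an increasing word on a subset of $[j+1,i-1]$. These are exactly the ``free'' letters. A letter of $\alpha$ cannot be the `1' (use $s$) or the `4' (use $i$). Nor can it be the `2' or `3', because the only smaller letter after it is the final $j$. Taking $s$ to be the predecessor of $i$, as you do, does not give $s\le j-1$ when $\alpha$ is nonempty.

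In the case $i<j-1$ the proposal does not recover \eqref{2314l1e3}. The distinguished letter is $i+1$, not $j-1$, and the split is on the antepenultimate letter $r$.
\begin{itemize}
\item If $r<i$, delete $i$ (not $j$) to land in $\mathcal{B}_{n-1,r,j-1}$.
\item If $i<r<j$, then $r=i+1$, since otherwise $(i+1)\,r\,i\,j$ is a $2314$. The predecessor of $i+1$ is then less than $i$, and deleting $i$ and $i+1$ lands in $\mathcal{B}_{n-2,s,j-2}$.
\end{itemize}
These two subcases form the first sum with $m=0,1$. Your plan of always deleting $j$ cannot produce the index $j-m-1$, because $j$ survives here.

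If $r>j$, then $\pi=\pi's\delta\beta rij$, where:
\begin{itemize}
\item $s<i$;
\item $\delta$ is empty or equal to $i+1$, since any $a\in[i+2,j-1]$ in this stretch yields the $2314$ $(i+1)\,a\,i\,j$;
\item $\beta$ is an increasing word on a subset of $[j+1,r-1]$ lying \emph{before} $r$, and $r$ is immediately followed by $i$.
\end{itemize}
Because $s<i$ and $r>j$ both lie in the run preceding $i$, the letters $i$ and $j$ are redundant. Deleting $i,j,\delta,\beta$ gives a member of $\mathcal{B}_{n-k-m-2,s,r-k-m-2}$ with $m=|\delta|$ and $k=|\beta|$. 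This is where $m\in\{0,1\}$ and $\binom{r-j-1}{k}$ come from.

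Your description (``$j-1$ at the end of its run'' versus ``followed by $r>j$ and then free letters'') has neither the right constraint nor the right position of the free letters. So the second sum and its index shifts cannot be obtained from it.
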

\begin{proof}
Let $\mathcal{B}_{n,i,j}$ denote the subset of $\mathcal{B}_n$ enumerated by $b_{n,i,j}$ and $\mathcal{B}_{n,j}=\cup_{i=1,i\neq j}^n\mathcal{B}_{n,i,j}$.  If $i>j$, then $\pi \in \mathcal{B}_{n,i,j}$ may be decomposed as $\pi=\pi's\alpha i j$, where $\alpha$ consists of letters in $[j+1,i-1]$ in ascending order and $s \in [j-1]$.  Then we may clearly delete the terminal $j$, due to $i>j$ directly preceding it, as well as the entirety of section $\alpha$, since these letters occur in ascending order and are of intermediate value between $s$ and $i$ (and hence none of them are actually needed in serving as a `1' or `4' in a possible occurrence of 2314). These deletions are seen to result after standardization in a member of $\mathcal{B}_{n-k-1,s,i-k-1}$, where $k=|\alpha|$.  As there are $\binom{i-j-1}{k}$ possibilities for $\alpha$, summing over all $k$ and $s$ yields \eqref{2314l1e1}.  Furthermore, within a member of $\mathcal{B}_{n,i,i+1}$, where $2 \leq i \leq n-1$, the final letter may clearly be deleted, resulting in an arbitrary member of $\mathcal{B}_{n-1,i}$ whose cardinality is given by the right side of \eqref{2314l1e2}.

To show \eqref{2314l1e3}, suppose $\pi \in \mathcal{B}_{n,i,j}$, where $n \geq 4$ and $2 \leq i <j-1 \leq n-1$.  Let $r$ denote the antepenultimate letter of $\pi$ and we consider cases on $r$.   If $r<i$, then we may delete $i$, yielding $\sum_{s=1}^{i-1}b_{n-1,s,j-1}$ possibilities. On the other hand, if $i<r<j$, then we must have $r=i+1$, for if $i+1<r<j$, then $(i+1)rij$ is a 2314.  Since the descent bottoms within a member of $\mathcal{P}_n$ are increasing, the predecessor of $i+1$ must be less than $i$.  In this case, we may delete both $i$ and $i+1$ as they are extraneous concerning the avoidance of 2314.  This leaves a member of $\mathcal{B}_{n-2,s,j-2}$ for some $s \in [i-1]$, and hence there are $\sum_{s=1}^{i-1}b_{n-2,s,j-2}$ possibilities for $\pi$.  The preceding two cases may be combined to give the first summation expression on the right-hand side of \eqref{2314l1e3}.

Now suppose $r \in [j+1,n]$.  Then we have $$\pi=\pi's\delta\beta rij,$$
where $s \in [i-1]$, $\delta$ is either empty or represents the letter $i+1$, and $\beta$ is a possibly empty sequence of letters in $[j+1,r-1]$ in increasing order.  Note that $\delta$ is as described due to $\pi$ ending in $i,j$, with $i<j-1$.  Further, the presence of both $s<i$ and $r>j$ within the increasing run directly preceding $i$ implies that $i$ and $j$ are redundant concerning the avoidance of 2314 (outside of the restriction placed on $\delta$).  Thus, we may delete both $i$ and $j$ and, in turn, all of the letters in $\delta$ and $\beta$ as well. Let $m=|\delta|$ and $k=|\beta|$, and hence $m=0$ or 1 and $0 \leq k \leq r-j-1$, with $\binom{r-j-1}{k}$ possibilities for $\beta$.  Then performing all of these deletions on $\pi$ is seen to result in an arbitrary member of $\mathcal{B}_{n-k-m-2,s,r-k-m-2}$.  Thus, for each $m$, $r$, $k$ and $s$, there are $\binom{r-j-1}{k}b_{n-k-m-2,s,r-k-m-2}$ possibilities for $\pi$.  Considering all possible values of these parameters accounts for the second summation in \eqref{2314l1e3} and completes the proof.
\end{proof}

It is convenient to consider the following two generating functions:
$$B_+(x,u,v)=\sum_{n\geq2}\sum_{i=1}^{n-1}\sum_{j=i+1}^nb_{n,i,j}u^{i-1}v^{j-i-1}x^n$$
and
$$B_-(x,u,v)=\sum_{n\geq2}\sum_{i=2}^{n}\sum_{j=1}^{i-1}b_{n,i,j}u^{i-1}v^{j-i-1}x^n.$$

Translation of the recurrences in Lemma \ref{2314lem1} (the details of which are lengthy and we omit) yields the following system of functional equations.

\begin{lemma}\label{2314lem2}
We have
\small\begin{align}
B_-(x,u,v)&=\frac{u^2x}{v(1-v+ux)}\left(\frac{1}{v-ux}B_+\left(x,u,\frac{u}{v-ux}\right)-B_+(x,u,u)\right),\label{2314lem2e1}\\
 B_+(x,u,v)&=x^2+\frac{uvx}{u-v}(B_+(x,u,u)-B_+(x,u,v))+\frac{uvx^2}{u-v}(uB_+(x,u,u)-vB_+(x,u,v))\notag\\ &\quad+\frac{ux^2}{(1-v+vx)(1-u+ux)(u-v)}\biggl(v(uvx-uv-u+v)\left(B_+(x,u,v)-B_+\left(x,u,\frac{1}{1-x}\right)\right)\notag\\
&\qquad\qquad\qquad+uv\left(B_+(x,u,v)-v(1-x)B_+\left(x,u,\frac{1}{1-x}\right)\right)-uv(1-v+vx)B_+(x,u,u)\biggr)\notag\\ &\quad+\frac{uvx^3}{(1-v+vx)(1-u+ux)(u-v)}\biggl((uvx-uv-u+v)\left(vB_+(x,u,v)-\frac{1}{1-x}B_+\left(x,u,\frac{1}{1-x}\right)\right)\notag\\
&\qquad\qquad\qquad+uv\left(B_+(x,u,v)-B_+\left(x,u,\frac{1}{1-x}\right)\right)-u^2(1-v+vx)B_+(x,u,u)\biggr)\notag\\
&\quad+ux(B_-(x,u,u)+B_+(x,u,u)). \label{2314lem2e2}\normalsize
\end{align}
\end{lemma}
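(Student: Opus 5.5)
The plan is to translate the three recurrences of Lemma \ref{2314lem1} into generating-function identities, one at a time. Throughout, I work with the exponents $i-1$ and $j-i-1$ carried by $u$ and $v$. I use the same devices as in Lemmas \ref{2413lem2} and \ref{2143lem2}: interchange the order of summation, sum the binomial sums via $\sum_k\binom{N}{k}t^k=(1+t)^N$ or $\sum_{j}\binom{i+j-1}{j}t^j=(1-t)^{-i}$, and sum geometric series in the free indices. Each geometric sum produces an expression of the form $(F(v)-F(w))/(v-w)$. These are the divided-difference terms, such as $\frac{1}{u-v}(B_+(x,u,u)-B_+(x,u,v))$, that appear in the statement. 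Recall that a member of $\mathcal{B}_{n,s,t}$ contributes $u^{s-1}v^{t-s-1}$ (with $v^{t-s-1}$ a negative power when $t<s$). Hence substituting $v=u$ recovers $\sum_{s}b_{n,s,t}u^{t-2}$, which is the generating function for $\mathcal{B}_{n,t}$ by last letter. This is why $B_\pm(x,u,u)$ appears whenever the recurrence sums over the penultimate letter $s$.

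\textbf{Identity \eqref{2314lem2e1}.} Multiply \eqref{2314l1e1} by $u^{i-1}v^{j-i-1}x^n$ and sum over $n\geq 3$ and $2\leq j<i\leq n$. Set $t=i-k-1$, the new last letter, which satisfies $t>s$ so that the term on the right is counted by $B_+$. For fixed $t$, $s$ and $k$, the remaining free index is $j$, with $s<j\leq t$ (since $j-1\geq s$ and $i-j-1\geq k$). Summing first over $k$ with the binomial weight, and then geometrically over $j$, yields a factor evaluated at $v$ minus the same factor at the boundary $j=s+1$. The first piece becomes $B_+(x,u,u/(v-ux))$ after collecting powers of $x$ (from $n=n'+k+1$) and of $u$ (from $i=t+k+1$). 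The boundary piece becomes $B_+(x,u,u)$. The prefactor $u^2x/(v(1-v+ux))$ arises as the denominator of the geometric sum. I would check this identity against the small cases $n=3,4$.

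\textbf{Identity \eqref{2314lem2e2}.} Here $B_+$ collects three sources: the initial term $x^2$ (from $b_{2,1,2}$), the case $j=i+1$, and the case $j>i+1$.
\begin{itemize}
\item \emph{The case $j=i+1$}, governed by \eqref{2314l1e2}, contributes $u^{i-1}x\sum_s b_{n-1,s,i}$. Using the remark above, this is $ux(B_-(x,u,u)+B_+(x,u,u))$.
\item \emph{The first double sum in \eqref{2314l1e3}}, with $m=0,1$, carries a factor $x^{m+1}$. The free index $i$ runs over $s<i<j-1$, and summing it geometrically in $u/v$ produces the terms $\frac{uvx}{u-v}(B_+(x,u,u)-B_+(x,u,v))$ and $\frac{uvx^2}{u-v}(uB_+(x,u,u)-vB_+(x,u,v))$.
\item \emph{The quadruple sum in \eqref{2314l1e3}} is handled as follows. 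Set $t=r-k-m-2$ and sum over $k$ via the binomial theorem. For fixed $(s,t)$, two free indices then remain, $i\in(s,j-1)$ and $j$, together with the slack $r$. The two nested geometric sums produce denominators $(1-v+vx)$, $(1-u+ux)$ and $(u-v)$, and boundary evaluations at $v=1/(1-x)$, where the sum over $r-j$ with weight $(1+x)$-type factors telescopes. This gives the two large bracketed groups for $m=0$ and $m=1$.
\end{itemize}

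The main obstacle is exactly this last bookkeeping step, namely getting the ranges of the nested geometric sums right, including the constraint $r\leq n$ and the exclusion of $j=i+1$. I would carry it out by first summing over $r$ (equivalently $k$) to obtain closed forms in $x$, then over $j$, then over $i$, each time recording the boundary term. Finally, I would verify the resulting equation numerically against directly computed $b_{n,i,j}$ for $n\le 6$.
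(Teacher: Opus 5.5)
Your plan is the same as the paper's, which omits the details: translate each recurrence of Lemma \ref{2314lem1} by re-indexing to the reduced triple $(n',s,t)$, sum the binomial weight as a negative-binomial series, and sum the remaining indices geometrically; carried out, this reproduces both identities exactly, including the $m=0$ and $m=1$ bracketed groups. Two small corrections to your description: in \eqref{2314lem2e1}, the $j=s+1$ end of the $j$-sum gives $B_+\left(x,u,\frac{u}{v-ux}\right)$ and the $j=t$ end gives $B_+(x,u,u)$, not the reverse; and in the quadruple sum, the $k$-sum at fixed $t$ gives $(1-x)^{-(t+m+2-j)}$, so $r\le n$ holds automatically and $B_+\left(x,u,\frac{1}{1-x}\right)$ comes from the $i=s+1$ boundary of the $i$-sum rather than from $(1+x)$-type factors.
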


By substituting \eqref{2314lem2e1} into \eqref{2314lem2e2}, we obtain after some algebra the following functional equation for $B_+(x,u,v)$:
\begin{align}
& (x - 1)(u x - u + 1)
  \bigl( u v^3 x^2 - u v^2 x^2 + u v^2 x - 2 u v x + v^2 x + u v - v^2 - u + v \bigr)B_+(x,u,v)\nonumber\\
& =  u^2 x(x -1 ) (v x - v + 1) (v x + 1) (u - 1) B_+(x,u,u)\nonumber\\
& \quad+ ux^2  (v x + 1) (u - v) \, B_+\left(x,u,\frac{1}{1-x}\right)
- x^2 (x - 1) (u x - u + 1) (v x - v + 1)(u - v).\label{eq2314a}
\end{align}

In order to solve \eqref{eq2314a}, we apply the kernel method and let $v_i=v_i(x,u)$ for $0 \leq i \leq 2$ denote the roots of the polynomial equation $u x^2v^3 - (1-x)(1-ux) v^2+(1+u- 2ux)v - u=0$.  The $v_i$ have the following series expansions (where we assume $0<|u|, |x|<1$):
\begin{align*}
v_0&=\frac{1}{u}x^{-2}-\frac{u+1}{u}x^{-1}-u-(u^2+1)x-(2u^3+u^2+u+1)x^2+\cdots,\\
v_1&=u+u^2x+\frac{u^3(2u-1)}{u-1}x^2+\frac{u^4(4u-1)}{u-1}x^3+\cdots,\\
v_2&=1+x-\frac{1}{u-1}x^2-\frac{2u+1}{u-1}x^3+\cdots.
\end{align*}
Substituting $v=v_1$ and $v=v_2$ into \eqref{eq2314a}, and solving the resulting system in the quantities $B_+(x,u,u)$ and $B_+\left(x,u,\frac{1}{1-x}\right)$, we obtain
\begin{align*}
B_+(x,u,u)&=\frac{(u-v_1)(u-v_2)x}{(1+v_1x)(1+v_2x)(1-u)u^2},\\
B_+\left(x,u,\frac{1}{1-x}\right)&=-\frac{(1-x)(1-v_1+v_1x)(1-v_2+v_2x)(1+ux)}{u(1+v_1x)(1+v_2x)}.
\end{align*}
Substituting these expressions into \eqref{eq2314a}, we obtain
\begin{equation}
B_+(x,u,v)=\frac{(v-v_1)(v-v_2)(1+ux)x^2}{(u+(2ux-u-1)v+(1-x)(1-ux)v^2-ux^2v^3)(1+v_1x)(1+v_2x)}.\label{2314b}
\end{equation}

Let $B(x,u,v)=\sum_{n\geq 2}\sum_{i=1}^n\sum_{j=1,j\neq i}^nb_{n,i,j}u^{i-1}v^{j-i-1}x^n$.  Note that $B(x,u,v)=B_+(x,u,v)+B_-(x,u,v)$, by the definitions.  From \eqref{2314lem2e1}, we obtain the following result.

\begin{theorem}\label{2314thm}
The generating function $B(x,u,v)$ equals
\begin{equation}\label{2314thme1}
B_+(x,u,v)+\frac{u^2x}{v(1-v+ux)}\left(\frac{1}{v-ux}B_+\left(x,u,\frac{u}{v-ux}\right)-B_+(x,u,u)\right),
\end{equation}
where $B_+(x,u,v)$ is given by \eqref{2314b}.
\end{theorem}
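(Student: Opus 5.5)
The statement is essentially immediate once Lemma \ref{2314lem2} and formula \eqref{2314b} are in hand, so the plan is to assemble these pieces.

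\textbf{Step 1: split $B$.} By definition of $B(x,u,v)$, each triple $(n,i,j)$ with $j\neq i$ has either $i<j$ or $i>j$. The terms with $i<j$ are exactly the terms of $B_+(x,u,v)$, and those with $i>j$ are exactly the terms of $B_-(x,u,v)$. Hence
$$B(x,u,v)=B_+(x,u,v)+B_-(x,u,v).$$

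\textbf{Step 2: eliminate $B_-$.} Substitute \eqref{2314lem2e1}, which expresses $B_-(x,u,v)$ through $B_+$ evaluated at $v=u$ and at $v=u/(v-ux)$. This yields \eqref{2314thme1} directly.

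\textbf{Step 3: justify \eqref{2314b}.} Formula \eqref{2314b} comes from the kernel method applied to \eqref{eq2314a}, so we must check that the two substitutions $v=v_1$ and $v=v_2$ are legitimate.

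\begin{itemize}
\item Both $v_1=u+O(x)$ and $v_2=1+O(x)$ are formal power series in $x$ (with coefficients rational in $u$). Hence $B_+(x,u,v_k)$ is a well-defined series, since the coefficient of $x^n$ in $B_+$ is a polynomial in $v$.
\item The kernel factor on the left of \eqref{eq2314a} is a constant multiple of the cubic in $v$ whose roots are the $v_i$. So it vanishes at $v_1$ and $v_2$, and the right side gives two linear equations in the unknowns $B_+(x,u,u)$ and $B_+\bigl(x,u,\tfrac{1}{1-x}\bigr)$.
\item These two equations must be independent; I would verify this by checking that the determinant is nonzero to lowest order in $x$.
\item Solving the system and back-substituting gives \eqref{2314b}. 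As a sanity check, the numerator vanishes at $v=v_1,v_2$, cancelling the corresponding zeros of the denominator, so the result is a genuine power series.
\end{itemize}

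\textbf{Main obstacle: the substitution $v=u/(v-ux)$ in \eqref{2314thme1}.} This is the delicate point. Near $v\approx 1$ and small $x$, the argument is about $u$, so $B_+(x,u,\cdot)$ evaluated there is again a convergent series. To confirm consistency, I would expand both sides to low order in $x$ and compare with direct counts of $b_{n,i,j}$ for $n\le 4$. Beyond that, the theorem is just Steps 1 and 2 combined.
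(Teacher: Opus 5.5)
Your proposal is correct and matches the paper's argument: the paper simply observes $B=B_++B_-$ from the definitions and substitutes \eqref{2314lem2e1}, with $B_+$ taken from the kernel-method solution \eqref{2314b}, which is obtained by setting $v=v_1,v_2$ in \eqref{eq2314a}, exactly as in your Steps 1--3. The ``delicate'' substitution $v\mapsto u/(v-ux)$ needs no convergence argument: $u/(v-ux)=\sum_{k\ge0}u^{k+1}v^{-k-1}x^k$ is a formal power series in $x$, and the identity is purely formal, inherited from Lemma \ref{2314lem2}.
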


Using the facts $v_0v_1v_2=\frac{1}{x^2}$ and $v_0+v_1+v_2=\frac{(1-x)(1-ux)}{ux^2}$, one can obtain the following formula for $B(x,u,v)$ in terms of $v_0$ only.

\begin{theorem}\label{2314thm2}
The generating function $B(x,u,v)$ is  given by \eqref{2314thme1},
where
\begin{align*}
B_+(x,u,v)&=
\frac{
x (u x + 1)
\bigl(
u v^{2} v_{0} x^{2}
+ u v v_{0}^{2} x^{2}
- u v v_{0} x^{2}
+ u v v_{0} x
+ v v_{0} x
- v v_{0}
+ u
\bigr)
}{
\bigl(u v_{0}^{2} x^{2} - u v_{0} x^{2} - u x + v_{0} x - v_{0}\bigr)
\bigl(u v^{3} x^{2} - u v^{2} x^{2} + u v^{2} x - 2 u v x + v^{2} x + u v - v^{2} - u + v\bigr)
}.
\end{align*}
\end{theorem}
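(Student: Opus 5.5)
The plan is to start from formula \eqref{2314b} for $B_+(x,u,v)$ and eliminate the two small roots $v_1,v_2$ in favour of $v_0$. Write the kernel polynomial as
$$K(v)=ux^2v^3-(1-x)(1-ux)v^2+(1+u-2ux)v-u=ux^2(v-v_0)(v-v_1)(v-v_2).$$
The denominator factor $u+(2ux-u-1)v+(1-x)(1-ux)v^2-ux^2v^3$ in \eqref{2314b} equals $-K(v)$. Hence
$$(v-v_1)(v-v_2)=\frac{-(\text{denominator factor})}{ux^2(v-v_0)}.$$
The resulting rational expression still has the wrong form, so I will instead rewrite the numerator directly. Using the Vieta relations quoted before the statement,
$$v_1+v_2=\frac{(1-x)(1-ux)}{ux^2}-v_0,\qquad v_1v_2=\frac{1}{x^2v_0},$$
gives
$$(v-v_1)(v-v_2)=v^2-\left(\frac{(1-x)(1-ux)}{ux^2}-v_0\right)v+\frac{1}{x^2v_0}.$$
This is a rational function of $v,v_0$ alone.

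Next I handle the factor $(1+v_1x)(1+v_2x)=1+x(v_1+v_2)+x^2v_1v_2$ in the same way:
$$(1+v_1x)(1+v_2x)=1+\frac{(1-x)(1-ux)}{ux}-xv_0+\frac{1}{v_0}.$$
Equivalently, $(1+v_1x)(1+v_2x)=K(-1/x)\big/\bigl(ux^2(-1/x-v_0)\bigr)$, which gives a second, independent check of the same quantity.

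Substituting both expressions into \eqref{2314b} and clearing the powers of $v_0$ and $u$ should reduce the quotient to the stated form. I expect the numerator to collect into $x(ux+1)(uv^2v_0x^2+uvv_0^2x^2-\cdots+u)$ after multiplying through by $ux^2v_0$. I expect the $v_0$-factor in the denominator, $uv_0^2x^2-uv_0x^2-ux+v_0x-v_0$, to be $-uxv_0\,(1+v_1x)(1+v_2x)$ up to this normalization.

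The main obstacle is this polynomial bookkeeping. It consists of confirming that the quadratic-in-$v_0$ factor matches exactly, and that the leftover powers of $u,x,v_0$ cancel, including the sign coming from $-K(v)$. If the direct match fails, I will use the cubic relation $K(v_0)=0$ to reduce any $v_0^3$ terms: for instance $ux^2v_0^3=(1-x)(1-ux)v_0^2-(1+u-2ux)v_0+u$. The expression for $B_-$, and hence for $B$, then follows from \eqref{2314thme1} unchanged, since Theorem \ref{2314thm} only requires knowing $B_+$ as a function of its third argument.
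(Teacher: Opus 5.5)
Your plan is correct and is the same as the paper's one-line argument, which uses the Vieta relations $v_0v_1v_2=1/x^2$ and $v_0+v_1+v_2=(1-x)(1-ux)/(ux^2)$ applied to \eqref{2314b}. The stated numerator quadratic is exactly $ux^2v_0(v-v_1)(v-v_2)$, and the $v_0$-factor in the denominator is exactly $-uxv_0(1+v_1x)(1+v_2x)$. Together with the other denominator factor $-K(v)$, the signs and the normalizing powers of $u,x,v_0$ cancel, with no need to invoke $K(v_0)=0$.

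One small slip in your optional cross-check: evaluating $K(-1/x)$ actually gives $(1+v_1x)(1+v_2x)=K(-1/x)/\bigl(u(-1/x-v_0)\bigr)$, without the factor $x^2$.
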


Note that one may take $u=v=1$ in Theorem \ref{2314thm2}.  Doing so, and noting $v_0\mid_{u=1}=\frac{1}{x^2}-\frac{2}{x}-C^2(x)$, we get
$$B(x,1,1)=\frac{1}{1-x}B_+\left(x,1,\frac{1}{1-x}\right)=\frac{1-2x-\sqrt{1-4x}}{2},$$
which yields the following result.

\begin{corollary}\label{2314cor1}
We have $B(x,1,1)=\frac{1-2x-\sqrt{1-4x}}{2}=x(C-1)$, and hence $p_n(2314)=C_{n-1}$ for all $n \geq 1$.
\end{corollary}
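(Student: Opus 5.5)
The corollary follows from Theorem \ref{2314thm2} by specializing $u=v=1$. I will then check the resulting series against small cases. Since $B(x,u,v)=B_+(x,u,v)+B_-(x,u,v)$, and setting $u=v=1$ just sums $b_{n,i,j}$ over all admissible pairs $i\neq j$, we have $B(x,1,1)=\sum_{n\ge2}b_nx^n$. So it suffices to show $B(x,1,1)=x(C-1)=\frac{1-2x-\sqrt{1-4x}}{2}$. Extracting coefficients then gives $b_n=C_{n-1}$ for $n\ge2$, and the case $n=1$ holds trivially.

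The first step is to evaluate \eqref{2314thme1} at $u=v=1$. In $B_-$ the prefactor $\frac{u^2x}{v(1-v+ux)}$ becomes $1$, and the inner argument $\frac{u}{v-ux}$ becomes $\frac{1}{1-x}$. This gives
$$B(x,1,1)=B_+(x,1,1)+\frac{1}{1-x}B_+\left(x,1,\frac{1}{1-x}\right)-B_+(x,1,1)=\frac{1}{1-x}B_+\left(x,1,\frac{1}{1-x}\right),$$
which is the identity recorded before the corollary. The task therefore reduces to evaluating $B_+(x,1,w)$ at $w=\frac1{1-x}$.

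For this I would use the kernel-method formula for $B_+(x,u,\frac{1}{1-x})$ obtained when solving \eqref{eq2314a}, or equivalently the $v_0$-form in Theorem \ref{2314thm2}, and set $u=1$. At $u=1$ the cubic defining the $v_i$ becomes
$$x^2v^3-(1-x)^2v^2+2(1-x)v-1=0.$$
Substituting $v=1/t$ and multiplying by $t^3$ gives $-t^3+2(1-x)t^2-(1-x)^2t+x^2=0$, that is, $t(t-(1-x))^2=x^2$. This factors nicely: the small root $t=1/v_0$ satisfies $\sqrt t\,(1-x-t)=x$, and this is solved by $\sqrt t=xC$, because $xC^2=C-1$ gives $1-x-x^2C^2=1-x-x(C-1)=1-xC$, and then $xC(1-xC)=x(C-xC^2)=x$. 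Hence $v_0=1/(x^2C^2)$, which agrees with the stated $v_0|_{u=1}=\frac1{x^2}-\frac2x-C^2$ after simplification.

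Plugging $v_0$ and $v=\frac1{1-x}$ into Theorem \ref{2314thm2} at $u=1$ and simplifying with $xC^2=C-1$ should then give $B(x,1,1)=x(C-1)$. The main obstacle is the specialization itself. The expressions for $v_1,v_2$ contain $\frac{1}{u-1}$, so setting $u=1$ directly in formulas involving $v_1,v_2$ (or the prefactor $\frac{1}{1-u}$ in $B_+(x,u,u)$) is delicate. This is exactly why one should work with the $v_0$-only expression, using $v_0v_1v_2=1/x^2$ and $v_0+v_1+v_2=\frac{(1-x)(1-ux)}{ux^2}$ to eliminate $v_1,v_2$; that expression is analytic at $u=1$. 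I would also check that the denominator of Theorem \ref{2314thm2} does not vanish at $v=\frac1{1-x}$, $u=1$. As a sanity check, the series expansion should give $x^2+2x^3+5x^4+14x^5+\cdots$, matching direct counts such as $|\mathcal P_4(2314)|=5$.
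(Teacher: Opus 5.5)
Your proposal is correct and is essentially the paper's argument. You specialize Theorem \ref{2314thm2} at $u=v=1$, reduce to $B(x,1,1)=\frac{1}{1-x}B_+\left(x,1,\frac{1}{1-x}\right)$, and substitute $v_0|_{u=1}=\frac{1}{x^2C^2}$, which equals the paper's $\frac{1}{x^2}-\frac{2}{x}-C^2$. The final simplification you left as ``should'' does go through: with $q=1/v_0=x(C-1)$, which satisfies $q^2-(1-2x)q+x^2=0$, the expression reduces to $\frac{(3x-1-x^2)q+x^2(1-x)}{x^2-(1-x)q}$, and this equals $q$ by the same quadratic relation.
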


\noindent\emph{Remark:} One also obtains from Theorem \ref{2314thm2} the formula
$$B_+(x,1,1)=\frac{1-x-(1+x)\sqrt{1-4x}}{2(2+x)},$$
from which $B_-(x,1,1)$ may be found by subtraction from $B(x,1,1)$.

\section{3214, 4213 and 4312}

In this section, we prove Theorem \ref{trCatalan} and first treat the pattern 3214. Let $d_n=p_n(3214)$.  To determine $d_n$, we proceed in a manner analogous to that used in the derivation of the result for 2314.  Given $n \geq 2$ with $i,j \in [n]$ and $i \neq j$, let $d_{n,i,j}$ denote the number of members of $\mathcal{D}_n=\mathcal{P}_n(3214)$ whose last two letters are $i,j$. Note that $d_n=\sum_{i=1}^n\sum_{j=1,j\neq i}^nd_{n,i,j}$ for all $n \geq 2$.   Furthermore, $d_{n,i,1}=d_{n,1,j}=0$ if $n \geq 3$ for all $i$ and $j$, with $d_{2,1,2}=1$, $d_{2,2,1}=0$.

The $d_{n,i,j}$ are determined recursively as follows.

\begin{lemma}\label{3214lem1}
If $n \geq 3$, then
\begin{equation}\label{3214l1e1}
d_{n,i,j}=\sum_{k=0}^{i-j-1}\sum_{s=1}^{j-1}\binom{i-j-1}{k}d_{n-k-1,s,i-k-1}, \quad 2 \leq j< i \leq n,
\end{equation}
\begin{equation}\label{3214l1e2}
d_{n,i,i+1}=\sum_{s=1,s\neq i}^{n-1}d_{n-1,s,i}, \qquad 2 \leq i \leq n-1,
\end{equation}
and
\begin{equation}\label{3214l1e3}
d_{n,i,j}=\sum_{m=0}^{j-i-1}\sum_{s=1}^{i-1}d_{n-m-1,s,j-m-1}+\sum_{m=0}^{j-i-1}\sum_{r=j+1}^n\sum_{k=0}^{r-j-1}\sum_{s=1}^{i-1}\binom{r-j-1}{k}d_{n-k-m-2,s,r-k-m-2},
\end{equation}
for $2 \leq i<j-1 \leq n-1$.
\end{lemma}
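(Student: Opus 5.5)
The plan is to follow the proof of Lemma \ref{2314lem1} step for step, recording where the pattern 3214 behaves differently from 2314. Let $\mathcal{D}_{n,i,j}$ denote the subset of $\mathcal{D}_n$ enumerated by $d_{n,i,j}$.

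\textbf{Case $j<i$.} Write $\pi=\pi's\alpha ij$, where $s\in[j-1]$ is the last letter before the run of letters exceeding $j$ that ends at $i$. Since descent bottoms increase, $\alpha$ is an increasing sequence of letters from $[j+1,i-1]$.
\begin{itemize}
\item The terminal $j$ can be deleted. Any 3214 using $j$ as the `4' can instead use $i$, since $i>j$ directly precedes $j$ and the letters playing `3', `2', `1' lie to the left of $i$.
\item The letters of $\alpha$ can be deleted. They are increasing and lie in value between $s$ and $i$. An occurrence $cba\,x$ with some letter of $\alpha$ as the `3', `2' or `1' forces a descent inside $\alpha i$, which is impossible. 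Using such a letter as the `4' can be replaced by using $i$.
\end{itemize}
After standardization this leaves an arbitrary member of $\mathcal{D}_{n-k-1,s,i-k-1}$ with $k=|\alpha|$. Summing over $k$ and $s$ gives \eqref{3214l1e1}.

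\textbf{Case $j=i+1$.} The final letter $i+1$ ascends from $i$. It cannot serve as the `1', `2' or `3' of a 3214, and using it as the `4' can be replaced by using $i$ only when the pattern's `1' is not $i$ itself. I will need to check this carefully; the paper's claim is that deletion is free, as for 2314. Deleting it leaves an arbitrary member of $\mathcal{D}_{n-1,i}$, which gives \eqref{3214l1e2}.

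\textbf{Case $i<j-1$.} Here the argument departs from the 2314 case. Write $\pi=\pi'\, s\,\delta\,\beta\, r\, i\, j$ or $\pi=\pi' s \delta i j$, where $s<i$.
\begin{itemize}
\item If the antepenultimate letter $r$ is less than $j$, the letters strictly between $s$ and $j$ in the last increasing run before $i$ can no longer be restricted to at most one, $i+1$, as they were in Lemma \ref{2314lem1}. Instead $\delta$ can be any increasing block of letters from $[i+1,j-1]$. Descent bottoms increase, so these letters must all appear in this final region, and they form an interval $[i+1,i+m]$ with $0\le m\le j-i-1$.
\item These letters are redundant. A 3214 using one of them as the `4' can use $j$ instead. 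They cannot serve as the `3', `2' or `1', because nothing smaller than them and larger than $j$ follows in decreasing position.
\item Deleting $\delta$ and then $i$ (preceded by $s<i$ in the same run) leaves a member of $\mathcal{D}_{n-m-1,s,j-m-1}$. This gives the first sum in \eqref{3214l1e3}.
\item If $r>j$, then $\beta$ is an increasing set of letters from $[j+1,r-1]$. The letters $i$ and $j$ are redundant because $s<i$ and $r>j$ lie in the run preceding them. After deleting $i$, $j$, $\delta$ and $\beta$ we land in $\mathcal{D}_{n-k-m-2,s,r-k-m-2}$, counted with weight $\binom{r-j-1}{k}$. This gives the second sum.
\end{itemize}

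\textbf{Main obstacle.} The delicate part is the redundancy claims in the last case. I must show that $i$ and $j$ cannot serve as the `1' of a 3214, and that the $\delta$-letters cannot serve as its `2' or `3'. For this I would exploit that every letter preceding them which is larger than them belongs to $[j+1,n]$, and that these letters form increasing runs interleaved only with smaller descent bottoms. Conversely, I must check that every choice of $(m,k,r,s)$ yields a valid 3214-avoider. This reduces to showing that reinserting the letters can only create occurrences that are already present after the deletion.
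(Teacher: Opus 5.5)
Your handling of the cases $j<i$ and $j=i+1$ follows the paper, apart from two small points noted at the end. There is, however, a genuine gap in the case $i<j-1$: you misidentify the block $\delta$. You make three claims, and none of them is right:
\begin{itemize}
\item that all letters of $[i+1,j-1]$ appear between $s$ and $i$;
\item that $\delta$ can be ``any increasing block'' from $[i+1,j-1]$;
\item that $\delta$ is the interval $[i+1,i+m]$.
\end{itemize}
For instance, $\pi=142536\in\mathcal{D}_{6,3,6}$ has $s=2$, $r=5$ and $\delta=5$, while $4\in[i+1,j-1]$ lies before $s$.

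The idea you are missing is the one the paper uses. Take $i$ as the `1' and $j$ as the `4'. Any two letters of $[i+1,j-1]$ appearing in decreasing order would then complete a 3214, so all of $[i+1,j-1]$ occurs in increasing order. Two consequences follow:
\begin{itemize}
\item If $r\in[i+1,j-1]$, then $r=j-1$, since $j-1$ must come after $r$ and only $i,j$ do.
\item Since $\delta$ consists of the last letters of $[i+1,j-1]$ to appear, $\delta=(j-m)(j-m+1)\cdots(j-1)$ for some $0\le m\le j-i-1$. The same description of $\delta$ applies when $r>j$.
\end{itemize}
This is what makes $\delta$ determined by $m$. Each member of $\mathcal{D}_{n-m-1,s,j-m-1}$ then has exactly one preimage, which is why it appears with coefficient $1$. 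If $\delta$ could be any increasing block, you would instead get a factor $\binom{j-i-1}{m}$.

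The same fact disposes of your ``main obstacle'': $i$ cannot serve as the `1', and a $\delta$-letter cannot serve as the `2', precisely because $[i+1,j-1]$ is increasing. Under your description the converse direction actually fails. Reinserting $(i+1)\cdots(i+m)\,i$ with $1\le m<j-i-1$ produces the occurrence $(i+m+1)(i+1)\,i\,j$. For example, $152436$ contains $5436$, even though both $152436$ and $142536$ reduce to $1324$ under your deletion.

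The two minor points:
\begin{itemize}
\item In the case $j<i$, an $\alpha$-letter \emph{can} serve as the `1', namely when two larger letters in decreasing order precede $s$. Such an occurrence can be rerouted by using $s$ as the `1' and $i$ as the `4', so the deletion is still valid, but your stated reason (``forces a descent inside $\alpha i$'') does not cover this.
\item In the case $j=i+1$, your hesitation is unnecessary. If $i$ were the `1', the `3' and `2' would have to lie strictly between $i$ and $i+1$, which is impossible.
\end{itemize}
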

\begin{proof}
Formulas \eqref{3214l1e1} and \eqref{3214l1e2} may be justified as \eqref{2314l1e1} and \eqref{2314l1e2} were above, so we focus on \eqref{3214l1e3}.  Let $\mathcal{D}_{n,i,j}$ denote the subset of $\mathcal{D}_n$ enumerated by $d_{n,i,j}$.  Suppose $\pi \in \mathcal{D}_{n,i,j}$, where $2 \leq i<j-1$. Let $s$ denote the rightmost letter of $\pi$ belonging to $[i-1]$ and let $r$ be the antepenultimate letter of $\pi$. If $r \in [i+1,j-1]$, then we must have $r=j-1$ in order to avoid 3214, and hence $\pi$ is expressible as $\pi=\pi's(j-m)(j-m+1)\cdots(j-1)ij$ for some $1 \leq m \leq j-i-1$.  Allowing $m$ to equal zero accounts for the case in which $r<i$.  Note that each of the letters $j-m,\ldots,j-1$ and $i$ may be deleted, leaving a member of $\mathcal{D}_{n-m-1,s,j-m-1}$.  Considering all possible $m$ and $s$ yields the first summation on the right side of \eqref{3214l1e3}.  If $r>j$, then the same argument applies as the one used in justifying the second summation in \eqref{2314l1e3} except that now the number of letters in $[i+1,j-1]$ occurring between $s$ and $i$ may range from 0 to $j-i-1$, with these letters being $j-m,j-m+1,\ldots,j-1$ for each $m$.  Thus, one need only replace $\sum_{m=0}^1$ with $\sum_{m=0}^{j-i-1}$ within the second summation in \eqref{2314l1e3} to account for those members of $\mathcal{D}_{n,i,j}$ wherein $i<j-1$ for which $r>j$.  Combining the prior cases then yields \eqref{3214l1e3} and completes the proof.
\end{proof}

Define the generating functions
$$D_+(x,u,v)=\sum_{n\geq2}\sum_{i=1}^{n-1}\sum_{j=i+1}^nd_{n,i,j}u^{i-1}v^{j-i-1}x^n$$
and
$$D_-(x,u,v)=\sum_{n\geq2}\sum_{i=2}^{n}\sum_{j=1}^{i-1}d_{n,i,j}u^{i-1}v^{j-i-1}x^n.$$

Translating the recurrences \eqref{3214l1e1}--\eqref{3214l1e3} to generating functions (we omit the details) yields the following functional equations.

\begin{lemma}\label{3214l2}
We have
\begin{align}
D_-(x,u,v)&=\frac{u^2x}{v(1-v+ux)}\left(\frac{1}{v-ux}D_+\left(x,u,\frac{u}{v-ux}\right)-D_+(x,u,u)\right), \label{3214l2e1}\\
D_+(x,u,v)&= x^2+ux(D_-(x,u,u)+D_+(x,u,u))\notag\\
&\quad+\frac{ux}{(1-vx)(u-v)}(uD_+(x,u,u)-vD_+(x,u,v))-uxD_+(x,u,u)\notag\\
&\quad+\frac{x^2}{(1-vx)(u-v)}\biggl(\frac{u^2}{(1-x)(1-u+ux)}
\left(D_+\left(x,u,\frac{1}{1-x}\right)-u(1-x)D_+(x,u,u)\right)\notag\\
&\quad+\frac{uv}{(1-x)(1-v+vx)}\left(v(1-x)D_+(x,u,v)-D_+\left(x,u,\frac{1}{1-x}\right)\right)\biggr)\notag\\
&\quad-\frac{ux^2}{(1-x)(1-u+ux)}\left(D_+\left(x,u,\frac{1}{1-x}\right)-u(1-x)D_+(x,u,u)\right). \label{3214l2e2}
\end{align}
\end{lemma}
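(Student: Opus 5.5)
The plan is to translate the three recurrences of Lemma \ref{3214lem1} term by term into generating functions, following the omitted computation for Lemma \ref{2314lem2}. Write $D_+$ and $D_-$ for the generating functions defined above. The $D_-$ equation \eqref{3214l2e1} comes from \eqref{3214l1e1} alone, and that recurrence is literally the same as \eqref{2314l1e1}. So \eqref{3214l2e1} follows by the same manipulation that gave \eqref{2314lem2e1}, which I would redo briefly for completeness:
\begin{enumerate}
\item multiply \eqref{3214l1e1} by $u^{i-1}v^{j-i-1}x^n$;
\item set $i=j+k+1+t$ and sum over $n$, $i$, $j$;
\item use $\sum_k\binom{i-j-1}{k}z^k=(1+z)^{i-j-1}$ to collapse the binomial sum;
\item resum the resulting geometric series in $j$.
\end{enumerate}
The outcome expresses everything through $D_+(x,u,\cdot)$ evaluated at $v=u$ and at $v=u/(v-ux)$, which is \eqref{3214l2e1}.

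For \eqref{3214l2e2}, I would split the sum defining $D_+$ over $j>i$ according to the three sources:
\begin{itemize}
\item the initial term $x^2$ from $d_{2,1,2}$;
\item the case $j=i+1$ from \eqref{3214l1e2};
\item the case $j>i+1$ from \eqref{3214l1e3}.
\end{itemize}
The $j=i+1$ terms carry weight $v^0$. By \eqref{3214l1e2} they count all members of $\mathcal{D}_{n-1}$ ending in $i$, which gives $ux(D_-(x,u,u)+D_+(x,u,u))$. Here setting $v=u$ converts $u^{s-1}v^{i-s-1}$ into $u^{i-2}$ for both signs of $i-s$.

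For the first sum in \eqref{3214l1e3}, I would substitute $j'=j-m-1$ and sum over $s<i$ and $0\le m\le j-i-1$. The constraint $i<j-1$ gives a finite geometric sum in $m$ with ratio $vx$, and it produces the divided-difference term
\[
\frac{ux}{(1-vx)(u-v)}\bigl(uD_+(x,u,u)-vD_+(x,u,v)\bigr).
\]
The term $-uxD_+(x,u,u)$ corrects for the boundary $j=i+1$, which \eqref{3214l1e3} excludes.

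The second sum in \eqref{3214l1e3} is the main obstacle. There I would sum over $r>j$ and $0\le k\le r-j-1$ with the binomial $\binom{r-j-1}{k}$. Summing over $k$ first turns $x^{k}$ weights into powers of $1/(1-x)$, and this is the source of the evaluations $D_+(x,u,1/(1-x))$. I would then sum over $j$ in the window $i+1<j<r$ together with $m\le j-i-1$. These two geometric sums give the denominators $(1-vx)(u-v)$, $(1-u+ux)$ and $(1-v+vx)$, and the last subtracted term fixes the boundary contributions again.

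The delicate point is the order of summation. One has to sum over $m$, then $j$, then $k$, each time keeping track of the new exponents of $u$ and $v$ on $d_{n',s,r'}$, where $r'=r-k-m-2$ and $s<i$. Done in this order, every inner sum is either a finite geometric series or a binomial series. Finally, I would check the resulting identity \eqref{3214l2e2} numerically against the coefficients for small $n$, say $n\le 6$, to catch sign and boundary errors.
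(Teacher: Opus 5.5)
Your proposal takes the same approach as the paper, which obtains \eqref{3214l2e1} and \eqref{3214l2e2} by directly translating \eqref{3214l1e1}--\eqref{3214l1e3} into generating functions (details omitted there). The pieces you identify are exactly the terms of \eqref{3214l2e2}: $x^2$, $ux(D_-(x,u,u)+D_+(x,u,u))$, the divided-difference term with the $-uxD_+(x,u,u)$ correction for $j=i+1$, and the $D_+(x,u,\frac{1}{1-x})$ terms from the second sum. Two small slips to fix when you write it out. First, once you reindex by the arguments of $d$, the $k$-sum for $D_-$ is the negative binomial series $\sum_{k\ge0}\binom{k+t}{k}z^k=(1-z)^{-t-1}$ with $z=ux/v$, not the finite binomial theorem. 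Second, in the first sum of \eqref{3214l1e3}, the $m$-sum is the infinite series $\sum_{m\ge0}(vx)^m$, and the finite geometric sum (ratio $u/v$, producing the $1/(u-v)$) is the one over $i$.
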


Upon substituting \eqref{3214l2e1} into \eqref{3214l2e2}, and simplifying, we obtain the following functional equation for $D_+(x,u,v)$:
\begin{align}
&\frac{uv^2x^2+v^3x(1-x)+uv(1-x)-v^2-u+v}{(u-v)(1-vx)(1-v+vx)}D_+(x,u,v)\nonumber\\
&=\frac{u^2x(u-1)}{(u-v)(1-vx)(1-u+ux)}D_+(x,u,u)\nonumber\\
&\quad-\frac{ux^2}{(1-x)(1-vx)(1-u+ux)(1-v+vx)}D_+\left(x,u,\frac{1}{1-x}\right)-x^2.\label{eq3214a}
\end{align}

In order to solve \eqref{eq3214a}, let $v_i=v_i(x,u)$ for $0 \leq i \leq 2$ denote the roots of  $x(1-x)v^3-(1-ux^2)v^2+(1+u-ux)v-u=0$, which have series expansions
\begin{align*}
v_0&=x^{-1}-u-u(u+1)x-u(2u^2+u+2)x^2+\cdots,\\
v_1&=u+u^2x+\frac{u^3(2u-1)}{u-1}x^2+\frac{u^4(5u-2)}{u-1}x^3+\cdots,\\
v_2&=1+x-\frac{1}{u-1}x^2-\frac{2u+1}{u-1}x^3+\cdots,
\end{align*}
assuming $0<|u|,|x|<1$.  By letting $v=v_1$ or $v=v_2$ in \eqref{eq3214a}, and solving the resulting system for  $D_+(x,u,u)$ and $D_+\left(x,u,\frac{1}{1-x}\right)$, we obtain
\begin{align*}
D_+(x,u,u)&=\frac{(u-v_1)(u-v_2)x(1-x(1-x)(v_1+v_2))}{u^2(1-u)},\\
D_+\left(x,u,\frac{1}{1-x}\right)&=
-\frac{(1-x)(1-v_1+v_1x)(1-v_2+v_2x)(1+x(u-v_1-v_2))}{u}.
\end{align*}
Substituting these expressions into \eqref{eq3214a} yields
\begin{align}
D_+(x,u,v)
&=\frac{(v-v_1)(v-v_2)((u-v-v_1-v_2)x(x-1)-1)x^2}
{v^2(u-v)x^2+v(v^2-u)x-v^2+uv-u+v}.\label{3214b}
\end{align}

Let $D(x,u,v)=\sum_{n\geq 2}\sum_{i=1}^n\sum_{j=1,j\neq i}^nd_{n,i,j}u^{i-1}v^{j-i-1}x^n$.
\begin{theorem}\label{3214thm}
The generating function $D(x,u,v)$ is given by $D(x,u,v)=D_+(x,u,v)+D_-(x,u,v)$, where $D_+(x,u,v)$ and $D_-(x,u,v)$ are as in \eqref{3214b} and \eqref{3214l2e1}, respectively.
\end{theorem}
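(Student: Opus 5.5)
Since $D(x,u,v)$ is by definition the sum of the contributions with $j>i$ and with $j<i$, the identity $D=D_++D_-$ is immediate. The content of Theorem \ref{3214thm} is therefore the explicit formula \eqref{3214b} for $D_+$, together with \eqref{3214l2e1} for $D_-$. The plan is to take Lemma \ref{3214l2} as established and carry out the kernel method on \eqref{eq3214a}, exactly parallel to the 2314 case.

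First, I substitute \eqref{3214l2e1} at $v=u$ into \eqref{3214l2e2}, so that $D_-(x,u,u)$ is expressed through $D_+$ alone. In \eqref{3214l2e1} at $v=u$ the bracket becomes $\frac{1}{u-ux}D_+\!\left(x,u,\frac{1}{1-x}\right)-D_+(x,u,u)$, so only the two unknown specializations $D_+(x,u,u)$ and $D_+\!\left(x,u,\frac{1}{1-x}\right)$ appear. Clearing denominators then gives \eqref{eq3214a}. Its kernel, after multiplying by $(u-v)(1-vx)(1-v+vx)$, is the cubic $K(v)=x(1-x)v^3-(1-ux^2)v^2+(1+u-ux)v-u$.

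Next, I verify the stated series expansions of the roots by iterating the cubic. The relevant structure is: $v_0\sim 1/x$, while $v_1=u+O(x)$ and $v_2=1+O(x)$ are formal power series in $x$ with coefficients rational in $u$. Hence $v_1$ and $v_2$ may be substituted legitimately into $D_+(x,u,v)$, because $D_+$ is a power series in $x$ whose coefficients are polynomials in $v$.

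Setting $v=v_1$ and $v=v_2$ kills the left side of \eqref{eq3214a}. This leaves two linear equations in $D_+(x,u,u)$ and $D_+\!\left(x,u,\frac{1}{1-x}\right)$, which I solve by Cramer's rule. I then simplify the result using the Vieta relations for $K$:
\begin{itemize}
\item $v_0v_1v_2=\frac{u}{x(1-x)}$,
\item $v_0+v_1+v_2=\frac{1-ux^2}{x(1-x)}$,
\item $\sum_{i<j} v_iv_j=\frac{1+u-ux}{x(1-x)}$.
\end{itemize}
These let me eliminate $v_0$, and symmetric functions of $v_1,v_2$ should reduce the expressions to the displayed closed forms for the two specializations. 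Substituting these back into \eqref{eq3214a} and solving for $D_+(x,u,v)$ gives a rational expression in $v$. Its numerator must vanish at $v=v_1,v_2$, so it factors as $(v-v_1)(v-v_2)$ times a linear factor in $v$. That linear factor should cancel against part of the kernel, leaving the denominator $v^2(u-v)x^2+v(v^2-u)x-v^2+uv-u+v$ of \eqref{3214b}. Finally, \eqref{3214l2e1} supplies $D_-$, which completes the proof.

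The main obstacle is this algebraic simplification: showing that the Cramer solutions collapse to the compact forms, and that the quotient in the last step factors cleanly. I would first do this symbolically with the Vieta identities, reducing everything to polynomials in $v_1+v_2$ and $v_1v_2$. As an independent check, I would compare the initial coefficients against the recurrences of Lemma \ref{3214lem1}. This check also confirms uniqueness of the solution, since \eqref{eq3214a} determines $D_+$ coefficientwise in $x$.
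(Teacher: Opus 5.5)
Your plan is the paper's own route. $D=D_++D_-$ holds by definition, and the substance lies in four steps. You derive \eqref{eq3214a} from Lemma \ref{3214l2} and substitute the two power-series roots $v_1,v_2$ of the kernel cubic $K(v)$. You then solve the resulting $2\times 2$ system for $D_+(x,u,u)$ and $D_+\bigl(x,u,\frac{1}{1-x}\bigr)$ and back-substitute into \eqref{eq3214a}, exactly as in the text preceding the theorem.

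One correction to your last step: no cancellation takes place there. Expanding the denominator $v^2(u-v)x^2+v(v^2-u)x-v^2+uv-u+v$ of \eqref{3214b} gives exactly your kernel $K(v)=x(1-x)v^3-(1-ux^2)v^2+(1+u-ux)v-u$. So \eqref{3214b} is simply $N(v)/K(v)$, with the cubic numerator $N(v)$ (leading coefficient $x^3(1-x)$) written in factored form as $(v-v_1)(v-v_2)\cdot x^2\bigl((u-v-v_1-v_2)x(x-1)-1\bigr)$.

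The linear factor could not cancel against $K$ in any case, since by Vieta its root is $v_0+\frac{u}{1-x}$, not $v_0$. What can cancel is $(v-v_1)(v-v_2)$ against $K(v)=x(1-x)(v-v_0)(v-v_1)(v-v_2)$. This gives $D_+(x,u,v)=x^2+\frac{ux^2}{(1-x)(v_0-v)}$, which is what Theorem \ref{3214thm2} reduces to once the quadratic factor common to its numerator and denominator is removed.

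Relatedly, your Cramer solutions involve only $v_1+v_2$ and $v_1v_2$ and never contain $v_0$. Vieta is needed only to pass to the $v_0$-form, not to eliminate $v_0$. None of this affects the validity of your argument, only what you should expect the final expression to look like.
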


Using the facts $v_0v_1v_2=\frac{u}{x(1-x)}$ and $v_0+v_1+v_2=\frac{1-ux^2}{x(1-x)}$ in \eqref{3214b}, one can obtain a formula for $D(x,u,v)$ involving only $v_0$.

\begin{theorem}\label{3214thm2}
We have $$D(x,u,v)=D_+(x,u,v)+\frac{u^2x}{v(1-v+ux)}\left(\frac{1}{v-ux}D_+\left(x,u,\frac{u}{v-ux}\right)-D_+(x,u,u)\right),$$ where
\begin{align*}
D_+(x,u,v)=\frac{x^2 (v x - v_0 x + u - v + v_0)
          (u v v_0 x^2 - v^2 v_0 x^2 - v v_0^2 x^2 + v^2 v_0 x + v v_0^2 x - v v_0 + u)}{v_0(x-1)(u v^2 x^2 - v^3 x^2 + v^3 x - u v x + u v - v^2 - u + v)}.
\end{align*}
\end{theorem}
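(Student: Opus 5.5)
The plan is to eliminate $v_1$ and $v_2$ from \eqref{3214b} using the symmetric functions of the three roots of the cubic $K(v):=x(1-x)v^3-(1-ux^2)v^2+(1+u-ux)v-u$. Theorem \ref{3214thm} already gives $D=D_++D_-$, with $D_-$ expressed through $D_+$ by \eqref{3214l2e1}. So the displayed formula for $D(x,u,v)$ follows at once from \eqref{3214l2e1}, and the real content is the closed form for $D_+(x,u,v)$ in terms of $v_0$ alone. By Vieta applied to $K(v)$, we have
$$v_0+v_1+v_2=\frac{1-ux^2}{x(1-x)},\qquad v_0v_1v_2=\frac{u}{x(1-x)},\qquad v_0v_1+v_0v_2+v_1v_2=\frac{1+u-ux}{x(1-x)}.$$

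First I express the two symmetric functions of $v_1,v_2$ through $v_0$:
$$\sigma:=v_1+v_2=\frac{1-ux^2}{x(1-x)}-v_0,\qquad \rho:=v_1v_2=\frac{u}{x(1-x)v_0}.$$
I then substitute these into the numerator of \eqref{3214b}. The factor $(v-v_1)(v-v_2)$ becomes $v^2-\sigma v+\rho$, and the factor $(u-v-v_1-v_2)x(x-1)-1$ becomes
$$(u-v-\sigma)x(x-1)-1=(v_0-v+u)x(x-1)+(1-ux^2)-1 .$$
Expanding the latter should produce, up to sign, the linear factor $vx-v_0x+u-v+v_0$ of the target.

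Next I clear the $1/(x(1-x)v_0)$ from $v^2-\sigma v+\rho$ by multiplying through by $x(1-x)v_0$. This gives
$$x(1-x)v_0v^2-(1-ux^2)v_0v+x(1-x)v_0^2v+u.$$
I expect this to agree with the target's second numerator factor only after reducing with the cubic relation $K(v_0)=0$. The target contains terms like $uvv_0x^2$ and $-vv_0$, which arise from splitting $(1-ux^2)v_0v$, so I will check this carefully. The extra factor $x(1-x)v_0$ then accounts for the $v_0(x-1)$ in the target denominator, together with a sign.

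Finally I compare denominators. The denominator of \eqref{3214b} is
$$v^2(u-v)x^2+v(v^2-u)x-v^2+uv-u+v .$$
It should match $uv^2x^2-v^3x^2+v^3x-uvx+uv-v^2-u+v$ after term-by-term expansion. Collecting signs then yields the stated formula.

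I expect the main obstacle to be the bookkeeping in the numerator. The rewriting is only valid modulo the relation $K(v_0)=0$, so if a direct match fails I will reduce both expressions as polynomials in $v_0$ modulo $K(v_0)$ and compare remainders. As a sanity check I will compare the first few terms of the series expansion using $v_0=x^{-1}-u-u(u+1)x-\cdots$.
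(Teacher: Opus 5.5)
Your proposal is correct and follows the paper's route exactly: you eliminate $v_1,v_2$ from \eqref{3214b} via $v_1+v_2=\frac{1-ux^2}{x(1-x)}-v_0$ and $v_1v_2=\frac{u}{x(1-x)v_0}$, with $D_-$ supplied by \eqref{3214l2e1}. The bookkeeping is cleaner than you feared: $x(1-x)v_0(v-v_1)(v-v_2)$ equals the second numerator factor identically, so no reduction by $K(v_0)=0$ is needed, and $(u-v-v_1-v_2)x(x-1)-1=-x(vx-v_0x+u-v+v_0)$, whose extra factor $x$ cancels the $x$ in $x(1-x)v_0$.
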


Taking $u=v=1$ in Theorem \ref{3214thm2}, and observing $v_0\mid_{u=1}=-\frac{3}{2x}+\frac{1}{2x}\sqrt{\frac{1-5x}{1-x}}$, we have
$$D_+(x,1,1)=\frac{x(1+x-4x^2+2x^3-\sqrt{1-6x+5x^2})}{2(1-x)(2-x)}$$
and
$$D_-(x,1,1)=\frac{x(1-3x+2x^2-\sqrt{1-6x+5x^2})}{2(2-x)},$$
which yields the following result.

\begin{corollary}\label{3214cor}
The generating function $\sum_{n\geq 2}p_n(3214)x^n$ is given by $$D(x,1,1)=\frac{x(1-x-\sqrt{1-6x+5x^2})}{2(1-x)},$$
and hence $p_n(3214)=\sum_{k=0}^{n-2}\binom{n-2}{k}C_k$ for all $n \geq 2$.
\end{corollary}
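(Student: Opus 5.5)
The plan is to specialize Theorem \ref{3214thm2} at $u=v=1$ and simplify. Put $u=1$ in the cubic $x(1-x)v^3-(1-ux^2)v^2+(1+u-ux)v-u=0$. One checks that $v=1$ is a root, since $x(1-x)-(1-x^2)+(2-x)-1=0$. Dividing out the factor $v-1$ leaves a quadratic, whose roots are $\frac{-3\pm\sqrt{(1-5x)/(1-x)}}{2x}$. Comparing with the expansion $v_0=x^{-1}-u-\cdots$ picks out the branch
$$v_0\mid_{u=1}=-\frac{3}{2x}+\frac{1}{2x}\sqrt{\frac{1-5x}{1-x}}.$$
More precisely, $v_0$ is the root with a pole at $x=0$ and leading coefficient $+x^{-1}$; the sign has to be checked against $\sqrt{(1-5x)/(1-x)}$ near $x=0$, and if needed the branch is adjusted accordingly. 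The roots $v_1,v_2$ both tend to $1$ at $u=1$, so one must work with $v_0$ only, which is exactly why Theorem \ref{3214thm2} is phrased in terms of $v_0$.

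\textbf{Step 1.} Compute $D_+(x,1,1)$ directly from the formula in Theorem \ref{3214thm2}. At $u=v=1$ the cubic in the denominator, $uv^2x^2-v^3x^2+v^3x-uvx+uv-v^2-u+v$, vanishes identically. So the value has to be taken as a limit $v\to1$ with $u=1$ (or, better, along $u=v\to 1$). This is done by factoring $v-1$ out of both numerator and denominator, using the quadratic relation satisfied by $v_0$. After rationalizing $\sqrt{1-6x+5x^2}=(1-x)\sqrt{(1-5x)/(1-x)}$, this should give
$$D_+(x,1,1)=\frac{x(1+x-4x^2+2x^3-\sqrt{1-6x+5x^2})}{2(1-x)(2-x)}.$$

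\textbf{Step 2.} Compute $D_-(x,1,1)$ from \eqref{3214l2e1}. At $u=v=1$ the prefactor $\frac{1}{v(1-v+ux)}$ becomes $1/x$. The inner argument $\frac{u}{v-ux}$ becomes $\frac{1}{1-x}$, so
$$D_-(x,1,1)=x\left(\frac{1}{1-x}D_+\left(x,1,\frac{1}{1-x}\right)-D_+(x,1,1)\right).$$
The quantity $D_+(x,1,\frac{1}{1-x})$ is nonsingular. It is obtained by plugging $v=1/(1-x)$ into the formula, or equivalently from the displayed expression for $D_+(x,u,\frac{1}{1-x})$ in terms of $v_1,v_2$, combined with the symmetric-function identities for the roots.

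\textbf{Step 3.} Add the two pieces and simplify over the common denominator $2(1-x)(2-x)$. The factor $2-x$ should cancel, giving
$$D(x,1,1)=\frac{x(1-x-\sqrt{1-6x+5x^2})}{2(1-x)}.$$
Then compare with the displayed generating function $\sum_{n\ge0}\sum_k\binom nkC_kx^n=\frac{1-x-\sqrt{1-6x+5x^2}}{2x(1-x)}$. This shows $D(x,1,1)=x^2\sum_{n\ge0}\bigl(\sum_k\binom nk C_k\bigr)x^n$, hence $p_n(3214)=\sum_{k=0}^{n-2}\binom{n-2}{k}C_k$ for $n\ge2$.

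The main obstacle is the removable singularity at $u=v=1$ in Step 1 (and the analogous $1/(u-v)$ and $1/(1-u)$ factors lurking in \eqref{eq3214a}). The first attempt will be to set $u=1$ and then factor $v-1$ symbolically. If cancellation fails, I will fall back on a series check: compare the first several coefficients with direct enumeration ($1,2,5,15,\dots$) to confirm the branch choice for $v_0$.
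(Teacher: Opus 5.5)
Your route is the paper's: specialize Theorem \ref{3214thm2} at $u=v=1$, compute $D_+(x,1,1)$ and $D_-(x,1,1)$, add them, and compare with $\frac{1-x-\sqrt{1-6x+5x^2}}{2x(1-x)}$. That final comparison is fine. However, two concrete computations you commit to in Steps 1 and 2 are wrong, and each would derail the argument as written.

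First, the value of $v_0$ at $u=1$ is wrong. Dividing $x(1-x)v^3-(1-x^2)v^2+(2-x)v-1$ by $v-1$ leaves $x(1-x)v^2-(1-x)v+1$. Its roots are $\frac{1\pm\sqrt{(1-5x)/(1-x)}}{2x}$, not $\frac{-3\pm\sqrt{(1-5x)/(1-x)}}{2x}$. The root matching $v_0=x^{-1}-1-2x-\cdots$ is
\[
v_0\big|_{u=1}=\frac{1}{2x}+\frac{1}{2x}\sqrt{\frac{1-5x}{1-x}}.
\]
The expression you wrote is not a root of the cubic at all. It expands as $-x^{-1}-1-2x-\cdots$, so it fails the leading-term test you yourself impose. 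The same expression is printed in the paper's derivation just before the corollary, evidently as a misprint: the paper's formulas for $D_\pm(x,1,1)$ correspond to the value displayed above. Neither of your two candidates begins with $+x^{-1}$, so no choice of branch fixes this; the division has to be redone. Substituting your value in Step 1 would not yield the $D_+(x,1,1)$ you state.

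Second, the prefactor in Step 2 is wrong. At $u=v=1$ the full prefactor $\frac{u^2x}{v(1-v+ux)}$ of \eqref{3214l2e1} equals $x/x=1$, not $x$, so
\[
D_-(x,1,1)=\frac{1}{1-x}D_+\Bigl(x,1,\frac{1}{1-x}\Bigr)-D_+(x,1,1).
\]
With your extra factor $x$, $D_-(x,1,1)$ would begin at $x^4$, yet $132\in\mathcal{P}_3(3214)$ already ends in a descent. The total would then begin $x^2+x^3+4x^4+\cdots$ rather than $x^2+2x^3+5x^4+\cdots$.

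Finally, the $0/0$ you worry about can be removed once and for all. The denominator in \eqref{3214b} is exactly the kernel cubic, i.e., $x(1-x)(v-v_0)(v-v_1)(v-v_2)$. Cancel $(v-v_1)(v-v_2)$ and use $v_1+v_2=\frac{1-ux^2}{x(1-x)}-v_0$ to get the identity
\[
D_+(x,u,v)=x^2+\frac{ux^2}{(1-x)(v_0-v)},
\]
which is regular at $u=v=1$. With the correct $v_0$, this gives $D_+(x,1,1)=x^2+\frac{2x^3}{(1-x)\bigl(1-2x+\sqrt{(1-5x)/(1-x)}\bigr)}$. Rationalizing via $(1-2x)^2-\frac{1-5x}{1-x}=\frac{4x^2(2-x)}{1-x}$ turns this into the formula you quote. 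Likewise $D_+(x,1,\frac{1}{1-x})=x^2+\frac{x^2}{(1-x)v_0-1}$ is immediate. Your Step 3 then goes through: over $2(1-x)(2-x)$, the polynomial parts of $D_+$ and $D_-$ sum to $x(1-x)(2-x)$ and the radical parts to $-x(2-x)\sqrt{1-6x+5x^2}$.
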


We now consider the cases of avoiding 4213 or 4312, which may be treated together. Let $e_n=p_n(4213)$ for $n \geq 1$.  To determine $e_n$, we consider a refinement of $e_n$ as follows.  Given $n \geq 3$ and $3 \leq i \leq n$, let $\mathcal{E}_{n,i}$ denote the subset of $\mathcal{E}_n=\mathcal{P}_n(4213)$ whose members have first descent top $i$ and let $\mathcal{E}_{n,i}^*\subseteq\mathcal{E}_{n,i}$ consist of those members that start $1i2$. Let $e_{n,i}=|\mathcal{E}_{n,i}|$ and $e_{n,i}^*=|\mathcal{E}_{n,i}^*|$.  For example, when $n=5$ and $i=4$, then we have $e_{5,4}=4$, the enumerated members of $\mathcal{E}_5$ comprising $\{12435,13425,14235,14253\}$, and $e_{5,4}^*=2$, which accounts for $\{14235,14253\}$.  Note $e_{n,i}$ and $e_{n,i}^*$  may assume nonzero values only for $n \geq 3$ and $i \in [3,n]$.

The $e_{n,i}$ and $e_{n,i}^*$ satisfy the following system of recurrences.

\begin{lemma}\label{4213lem1}
We have
\begin{equation}\label{4213lem1e1}
e_n=1+\sum_{i=3}^ne_{n,i}, \qquad n \geq 3,
\end{equation}
\begin{equation}\label{4213lem1e2}
e_{n,i}=\sum_{j=0}^{i-3}\binom{i-2}{j}e_{n-j,i-j}^*, \qquad 3 \leq i \leq n,
\end{equation}
and
\begin{equation}\label{4213lem1e3}
e_{n,i}^*=\sum_{m=1}^{i-3}e_{n-m,i-m}^*+\sum_{k=i+1}^n\sum_{j=0}^{k-i-1}\sum_{m=2}^{i-2}\binom{k-i-1}{j}e_{n-m-j,k-m-j}^*, \qquad 5 \leq i \leq n-1,
\end{equation}
with $e_1=e_2=1$,  $e_{n,3}^*=e_{n-2}$ for $n \geq 3$, $e_{n,4}^*=e_{n-2}$ and $e_{n,n}^*=2^{n-4}$ for $n \geq 4$.  Moreover, the same recurrences and initial conditions are seen to hold if the arrays $e_{n,i}$ and $e_{n,i}^*$ are defined in conjunction with the pattern 4312 instead of 4213, and hence, in particular, $p_n(4312)=p_n(4213)$ for all $n \geq 1$
\end{lemma}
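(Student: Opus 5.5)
We would prove Lemma \ref{4213lem1} by the same kind of structural decomposition used for Lemmas \ref{2143lem1} and \ref{2314lem1}, arguing first for 4213 and then checking that each step uses only features shared by 4312.

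\textbf{The easy formulas.} For \eqref{4213lem1e1}, the only member of $\mathcal{P}_n$ with no descent is $12\cdots n$. Every other member has a first descent top $i\geq 3$, since $1$ and $2$ cannot be descent tops.

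For \eqref{4213lem1e2}, let $\pi\in\mathcal{E}_{n,i}$. Then $\pi$ begins with an increasing run $1\alpha i$, where $\alpha\subseteq[2,i-1]$, and the first descent bottom is the smallest letter missing from $1\alpha$. Since descent bottoms increase, that missing letter is $2$. Hence $\alpha\subseteq[3,i-1]$, and the descent bottom following $i$ is $2$. The letters of $\alpha$ sit below $i$, in increasing order, before the $2$, and none can serve as the `4' together with the $2$.

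We claim these letters are irrelevant to containment of 4213 (resp.\ 4312). Any occurrence using some $a\in\alpha$ in the role of `4' can be rerouted through $i$, because $i>a$ lies between $a$ and the later letters. Any occurrence using $a$ as a smaller letter would require a larger letter to its left, but only $1$ and smaller elements of $\alpha$ precede it.

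Deleting $\alpha$, with $|\alpha|=j$ for $0\le j\le i-3$, then gives a member of $\mathcal{E}^*_{n-j,i-j}$. The number of choices of $\alpha$ is $\binom{i-3}{j}$. The stated coefficient $\binom{i-2}{j}$ suggests the paper's $\mathcal{E}^*$ convention, or the position of the letters of $\alpha$ (some may follow $2$ before the next descent), is slightly different. We would reconcile this by also allowing the deleted letters from $[3,i-1]$ to occur right after the $2$ in the second run, and then check that the count matches $\binom{i-2}{j}$.

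\textbf{Initial conditions.}
\begin{itemize}
\item $e^*_{n,3}=e_{n-2}$: delete $1,3$ and relabel, since $1\,3\,2$ cannot start a 4213 or 4312.
\item $e^*_{n,4}=e_{n-2}$: here $3$ is forced to lie after $2$, and a similar deletion works.
\item $e^*_{n,n}=2^{n-4}$: after $1n2$, the remaining word is a flattened partition avoiding 213 (resp.\ 312) with $n$ playing the `4'. Table \ref{tab:length3-patterns} gives $2^{m-2}$ such words for the appropriately shifted $m$.
\end{itemize}

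\textbf{Recurrence \eqref{4213lem1e3}.} For $\pi\in\mathcal{E}^*_{n,i}$ with $5\le i\le n-1$, write $\pi=1i2\cdots$ and split on the letter immediately following the run starting at $2$.
\begin{itemize}
\item \emph{No letter above $i$ enters before $3,\dots$.} The run after $2$ consists of consecutive small letters $3,\dots,m+1$. These letters, all below $i$ and increasing, together with $2$, behave like an extended block. Deleting $m$ of them gives $e^*_{n-m,i-m}$, which accounts for the first sum.
\item \emph{A letter $k>i$ appears.} Then $\pi=1i2\,\beta\,\gamma k\cdots$, where $\gamma\subseteq[i+1,k-1]$ is increasing with $|\gamma|=j$, contributing the factor $\binom{k-i-1}{j}$. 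The small letters number $m\in[2,i-2]$. One shows that the intermediate letters of $\gamma$ and the $m$ small letters are redundant: any potential `4' among them can be replaced by $k$ or $i$. Deleting them yields $e^*_{n-m-j,k-m-j}$, with $k$ becoming the new second letter after relabeling.
\end{itemize}

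\textbf{The 4312 case.} Every deletion argument only uses that the deleted letters are dominated, as potential `4's, by a retained letter to their left, and that they cannot act as the `1'. Both facts hold symmetrically for 4312, since 4213 and 4312 differ only in the order of the two middle letters, which are always retained letters. The same bijections and counts therefore apply, so $p_n(4312)=p_n(4213)$.

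\textbf{Main obstacle.} The hardest step is \eqref{4213lem1e3}. It requires pinning down the exact shape forced after $1i2$, in particular why $m$ ranges over $[2,i-2]$ and why $i$ itself can be dropped in favour of $k$. It also requires verifying sufficiency: that every reassembled word avoids the pattern. I would first test the recurrence on $n=6,7$ by listing members before committing to the precise case boundaries.
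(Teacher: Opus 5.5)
\textbf{Gap in the derivation of \eqref{4213lem1e2}.} Your argument rests on a false step. That descent bottoms increase only tells you the first descent bottom is the least letter missing from the first run $1\alpha i$. It need not be $2$: for example $12435\in\mathcal{E}_{5,4}$, where $\alpha=2$ and the first descent bottom is $3$. So $\alpha$ ranges over all \emph{proper} subsets of $[2,i-1]$, which gives the coefficient $\binom{i-2}{j}$ and the range $0\le j\le i-3$ at once. After deleting $\alpha$ and standardizing, the first descent bottom becomes $2$, so you land in $\mathcal{E}^*_{n-j,i-j}$. Restricting to $\alpha\subseteq[3,i-1]$ misses every $\pi$ with $2\in\alpha$. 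Your proposed repair, also deleting letters that sit after $2$ in the second run, is invalid, because such letters are not extraneous: in $162435$ the $4$ is needed for the occurrence $6435$ of 4213.

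\textbf{Gap in \eqref{4213lem1e3}.} The missing idea is the forcing step that pins down the second run. Write $\pi=1i2x\pi'$. If $x\neq3$, then $3$ starts the third run. If moreover $x\in[4,i-1]$, then $x$ must be followed directly by $x+1,\dots,i-1$, since otherwise $i\,x\,3\,(x+1)$ is a 4213. So the small letters of the second run form a \emph{top} interval ending at $i-1$, not $3,\dots,m+1$ as in your first case.

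The first sum then comes from two situations. The term $m=1$ is $x=3$, where you delete $3$. The terms $m=i+1-x\in[2,i-3]$ come from a second run $2x\cdots(i-1)$ followed directly by $3$. There one can delete $1$, $i$ and $x,\dots,i-2$, letting $i-1$ take over the role of $i$.

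For the second sum, $k>i$ is the predecessor of $3$ and $\pi=1i2\alpha\beta k3\pi'$. Here $\alpha$ is the forced top interval with $0\le|\alpha|\le i-4$, and $\beta\subseteq[i+1,k-1]$ is arbitrary. One deletes $\alpha$, $\beta$, $1$ and $i$. The index $m\in[2,i-2]$ equals $|\alpha|+2$ precisely because $1$ and $i$ are among the deleted letters.

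Your justification ``replace by $k$ or $i$'' cannot work, since $i$ itself is deleted. What makes $i$ redundant is that $k$ (resp.\ $i-1$) comes after $i$ but before any letter that could play the `2' in a 4213 with $i$ as the `4'.

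\textbf{The 4312 case.} Your symmetry remark does not transfer the argument. For 4312 the obstruction shaping the second run becomes $i\,x\,3\,4$, which forces $x=4$. So the small letters form the \emph{bottom} interval $4,5,\dots$. In the first-sum case $i$ is then no longer deletable: $182437569$ contains $8756$, but deleting $8$ leaves no 4312. One removes $3,4,\dots$ instead. The counts agree, but the bijections must be rebuilt rather than reused.
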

\begin{proof}
Formula \eqref{4213lem1e1} follows from the definitions, where the extra added one prior to the sum accounts for the no descents permutation $12\cdots n$.  A member $\pi \in \mathcal{E}_{n,i}$ for $3 \leq i \leq n$ is expressible as $\pi=1\alpha i2\beta$, where $\alpha$ consists of elements of a subset of $[2,i-1]$ in increasing order.  Note that this subset must be proper or else $i$ would not be a descent top.  The letters in $\alpha$ are extraneous concerning the avoidance of 4213 as they are followed by $i$ and thus may be deleted, leaving an arbitrary member of $\mathcal{E}_{n-j,i-j}^*$ after standardization, where $j=|\alpha|$.  Considering all possible $0 \leq j \leq i-3$ then implies \eqref{4213lem1e2}.

Deletion of the first two letters of a member of $\mathcal{E}_{n,3}^*$ or $\mathcal{E}_{n,4}^*$ results in a member of $\mathcal{E}_{n-2}$ with no restrictions, whence $e_{n,3}^*=e_{n,4}^*=e_{n-2}$.  Further, $\pi \in \mathcal{E}_{n,n}^*$ implies $\pi=1n\pi'$, where $\pi'$ is $213$-avoiding.  In \cite[Theorem 4]{NR}, it was shown $p_n(213)=2^{n-2}$ for all $n \geq 2$, which implies $e_{n,n}^*=2^{n-4}$ for $n \geq 4$.  Finally, suppose $\pi \in \mathcal{E}_{n,i}^*$, where $n \geq 6$ and $5 \leq i \leq n-1$, and suppose $\pi=1i2x\pi'$, where $x \geq 3$.  We consider several cases on $x$.  If $x=3$, then 3 may clearly be deleted, giving $e_{n-1,i-1}^*$ possibilities.  Now assume $x\in [4,i-1]$. If $x<i-1$, then $x$ must be directly followed by $x+1,x+2,\ldots,i-1$, for otherwise the subsequence $ix3(x+1)$ is an occurrence of 4213.  In particular, if $x \in [4,i-1]$, then $i-1$ must occur somewhere between 2 and 3 within $\pi$.  If 3 directly follows $i-1$, then the letters $x,x+1,\ldots,i-2$ as well as 1 and $i$ may all be deleted, and thus we get $\sum_{m=2}^{i-3}e_{n-m,i-m}^*$ possibilities for $\pi$, upon allowing $x$ to vary.  Combining this with the prior case wherein $x=3$ accounts for the first summation on the right-hand side of \eqref{4213lem1e3}.

So assume either $x\in [i+1,n]$ or $x \in [4,i-1]$, with at least one letter in $[i+1,n]$ occurring prior to 3 in the latter case.  Let $k \in [i+1,n]$ denote the predecessor of 3 in either case.  Then we may write $\pi=1i2\alpha\beta k3\pi'$, where $\alpha$, $\beta$ and $\pi'$ are possibly empty and $\alpha$ and $\beta$ are increasing and consist of letters in $[4,i-1]$ and $[i+1,k-1]$, respectively.  Let $m=|\alpha|$ and $j=|\beta|$, where $0 \leq m \leq i-4$ and $0 \leq j \leq k-i-1$.  Then we must have $\alpha=(i-m)(i-m+1)\cdots(i-1)$ in order to avoid 4213, with $\binom{k-i-1}{j}$ possibilities for $\beta$.  Note that all of the letters in $\alpha$ and $\beta$ as well as 1 and $i$ may be deleted, as they place no further restriction on $\pi'$ with regard to 4213 due to $k$ directly following them.  Thus, the subsequence $2k3 \pi'$ is seen to correspond to an arbitrary member of $\mathcal{E}_{n-m-j-2,k-m-j-2}^*$.  Conversely, any $\pi$ of the stated form satisfying all of the requirements given above concerning $\alpha$, $\beta$ and $\pi'$ is seen to avoid 4213.  Considering all possible $k$, $j$ and $m$, and replacing $m$ by $m-2$, accounts for the second summation in \eqref{4213lem1e3} and thus completes the proof of \eqref{4213lem1e3}.

Concerning the final statement, we leave it as an exercise for the reader to verify that the same recurrences and initial conditions apply to the arrays $e_{n,i}$ and $e_{n,i}^*$ if they are defined in conjunction with the pattern 4312 instead of 4213.  The argument follows a similar course.  If desired, it is also possible to define a recursive bijection between $\mathcal{E}_n$ and $\mathcal{P}_n(4312)$ drawing upon the various cases used above in proving \eqref{4213lem1e3}.
\end{proof}

Let $E(x,v)=\sum_{n\geq3}\sum_{i=3}^ne_{n,i}v^{i-3}x^n$, $F(x,v)=\sum_{n\geq3}\sum_{i=3}^ne_{n,i}^*v^{i-3}x^n$ and $G(x)=\sum_{n\geq2}e_nx^n$. We obtain the following system of functional equations.

\begin{lemma}\label{4213lem2}
We have
\begin{align}
G(x)&=\frac{x^2}{1-x}+E(x,1), \label{4213lem2e1}\\
E(x,v)&=\frac{1}{(1-vx)^2}F\left(x,\frac{v}{1-vx}\right), \label{4213lem2e2}\\
\left(1-\frac{vx}{1-vx}+\frac{v^2x^2}{(1-vx)(1-v+vx)}\right)F(x,v)&=x^3(1-vx)+x^2(1+v-vx)G(x)\notag\\
&\quad+\frac{vx^2}{1-x}\left(\frac{1}{(1-vx)(1-v+vx)}-1\right)F\left(x,\frac{1}{1-x}\right).\label{4213lem2e3}
\end{align}
\end{lemma}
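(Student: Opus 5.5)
The plan is to translate each recurrence of Lemma \ref{4213lem1} into generating functions, in the same way as in Lemma \ref{2143lem2}.

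\emph{Equation \eqref{4213lem2e1}.} Multiply \eqref{4213lem1e1} by $x^n$ and sum over $n\geq3$. Add the term $e_2x^2=x^2$. The constant $1$ contributes $\sum_{n\geq2}x^n=\frac{x^2}{1-x}$, and the double sum contributes $E(x,1)$.

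\emph{Equation \eqref{4213lem2e2}.} Multiply \eqref{4213lem1e2} by $v^{i-3}x^n$, write $i'=i-j$ and $n'=n-j$, and interchange the sums. This gives
$$E(x,v)=\sum_{n',i'}e^*_{n',i'}x^{n'}v^{i'-3}\sum_{j\geq0}\binom{i'+j-2}{j}(vx)^j.$$
The range condition $j\le i-3$ is exactly $i'\ge3$, so it is automatic. The inner sum equals $(1-vx)^{-(i'-1)}$. Hence the total is
$$(1-vx)^{-2}\sum e^*_{n',i'}x^{n'}\Bigl(\tfrac{v}{1-vx}\Bigr)^{i'-3},$$
which is $(1-vx)^{-2}F\bigl(x,\tfrac{v}{1-vx}\bigr)$.

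\emph{Equation \eqref{4213lem2e3}, boundary terms.} Split $F$ according to the boundary values $i=3$, $i=4$ and $i=n$, plus the interior range $5\le i\le n-1$.
\begin{itemize}
\item $i=3$ and $i=4$: since $e^*_{n,3}=e^*_{n,4}=e_{n-2}$, these give $(1+v)x^2\bigl(G(x)+x\bigr)$. The $+x$ comes from $e_1=1$, which is not in $G$; note also $e^*_{4,4}=e_2$ and $e^*_{3,3}=e_1$, and these need care.
\item $i=n$: this gives $\sum_{n\ge4}2^{n-4}v^{n-3}x^n=\frac{vx^4}{1-2vx}$.
\end{itemize}

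\emph{Equation \eqref{4213lem2e3}, interior terms.}
\begin{itemize}
\item First sum of \eqref{4213lem1e3}: substitute $n'=n-m$, $i'=i-m$. This yields a geometric factor in $vx$. The constraint $i\le n-1$, i.e.\ $i'\le n'-1$, must be handled by subtracting the diagonal terms $i'=n'$. Those diagonal terms are the explicit $2^{n-4}$ values, so they remain explicit.
\item Second (triple) sum: set $k'=k-m-j$ and $n'=n-m-j$, and sum first over $j$ with the binomial coefficient. This produces factors $(1-vx)^{-1}$ and a geometric sum in $v$ over $k-i$.
\end{itemize}
As in the proof of Lemma \ref{2143lem2}, the geometric sum in $v$ from $k-i$ yields a quotient of the form $\frac{1-(\cdot)^{\,k'-\ldots}}{1-v+vx}$. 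Splitting this quotient gives two pieces: a term in $F(x,v)$, which goes into the kernel on the left, and a term in $F$ evaluated at the special point $v=\frac1{1-x}$. That point arises because the exponent of $v$ becomes tied to $k'$, forcing $x^{n'}(1-x)^{-(k'-3)}$. Finally, collect all $F(x,v)$ terms on the left and all explicit terms on the right.

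\emph{Main obstacle.} I expect the hardest part to be the index bookkeeping in the triple sum. Each of the ranges $m\in[2,i-2]$, $k\in[i+1,n]$ and $5\le i\le n-1$ must translate correctly into the new variables. In particular, the lower cutoff $i\ge5$ and the upper cutoff $i\le n-1$ generate correction terms. These include the boundary pieces with $i'=3,4$ and $i'=n'$, and they must all cancel into the compact explicit form $x^3(1-vx)+x^2(1+v-vx)G(x)$. I will verify the final identity by comparing coefficients for small $n$ (say $n\le7$), using values computed directly from Lemma \ref{4213lem1}.
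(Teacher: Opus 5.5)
Your plan follows the paper's proof. \eqref{4213lem2e1} and \eqref{4213lem2e2} go exactly as you say. For \eqref{4213lem2e3}, the paper also removes the rows $i\in\{3,4,n\}$ from $F(x,v)$, re-indexes the two sums of \eqref{4213lem1e3}, and collects a geometric series in $v(1-x)$ whose two ends give $F(x,v)$ (moved into the kernel) and $F\bigl(x,\frac{1}{1-x}\bigr)$. Two steps of your sketch are wrong as written, though, and would change the explicit terms if carried through.

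\emph{Boundary rows.} The care you flag is needed.
The row $i=4$ exists only for $n\ge4$, so it contributes $vx^2G(x)$, not $vx^2(G(x)+x)$. Since $e^*_{4,4}=e_2$ already lies in that row, the row $i=n$ must be summed from $n\ge5$, giving $\frac{2v^2x^5}{1-2vx}$. The interior sum is therefore $F(x,v)-x^3-(1+v)x^2G(x)-\frac{2v^2x^5}{1-2vx}$; your version subtracts an extra $vx^3+vx^4$.
In the first sum of \eqref{4213lem1e3}, the diagonal terms $i'=n'$ include $e^*_{3,3}=1$, which is not of the form $2^{n-4}$. So what is subtracted is $x^3+\frac{vx^4}{1-2vx}$. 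Also, the cutoff $i\ge5$ forces $m\ge2$ when $i'=3$; this is the source of the term $-vx^3G(x)$.

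\emph{Triple sum.} Once $k'=k-m-j$ is fixed, $\binom{k-i-1}{j}=\binom{k'+m-i-1+j}{j}$ is summed against $x^j$. So the $j$-sum produces $(1-x)^{-(k'+m-i)}$, not a power of $(1-vx)^{-1}$. That is what leads to the evaluation point $\frac{1}{1-x}$; in Lemma \ref{2143lem2} the binomial was tied to the $v$-exponent, which is why the shapes differ. The factor $(1-vx)^{-1}$ comes instead from the $m$-sum, and the term $m=2$ must be split off because of $i\ge5$. With $t=i-m$ the triple sum becomes
\[
\sum_{n'\ge k'\ge3}e^*_{n',k'}x^{n'}\Biggl(\frac{(vx)^3}{1-vx}\sum_{t=2}^{k'-1}v^{t-3}(1-x)^{t-k'}+(vx)^2\sum_{t=3}^{k'-1}v^{t-3}(1-x)^{t-k'}\Biggr).
\]
Summing the geometric series in $q=v(1-x)$ gives $\frac{q^a-q^{k'}}{1-v+vx}$ with $a=2,3$. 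The $q^a$ pieces yield $F\bigl(x,\frac{1}{1-x}\bigr)$ and the $q^{k'}$ pieces yield $F(x,v)$. With these corrections, the $F(x,v)$ terms assemble into the stated kernel, and the explicit terms combine to $x^3(1-vx)+x^2(1+v-vx)G(x)$.
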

\begin{proof}
Equation \eqref{4213lem2e1} follows from \eqref{4213lem1e1} and the definitions of the respective generating functions.  From \eqref{4213lem1e2}, we get
\begin{align*}
E(x,v)&=\sum_{n\geq3}\sum_{i=3}^n\sum_{j=0}^{i-3}\binom{i-2}{j}e_{n-j,i-j}^*v^{i-3}x^n=\sum_{j\geq0}\sum_{n \geq j+3}\sum_{i=j+3}^n\binom{i-2}{j}e_{n-j,i-j}^*v^{i-3}x^n\\
&=\sum_{j\geq0}\sum_{n\geq3}\sum_{i=3}^n\binom{i+j-2}{j}e_{n,i}^*v^{i+j-3}x^{n+j}\\
&=\sum_{n\geq3}\sum_{i=3}^ne_{n,i}^*v^{i-3}x^n\cdot\frac{1}{(1-vx)^{i-1}}=\frac{1}{(1-vx)^2}F\left(x,\frac{v}{1-vx}\right),
\end{align*}
which yields \eqref{4213lem2e2}.  For \eqref{4213lem2e3}, first note that by the formulas for $e_{n,i}^*$ when $i=2,\,3\,$ or $n$, we have
\begin{align}
\sum_{n\geq6}\sum_{i=5}^{n-1}e_{n,i}^*v^{i-3}x^n&=F(x,v)-\sum_{n\geq3}e_{n-2}x^n-v\sum_{n\geq4}e_{n-2}x^n-\sum_{n\geq5}2^{n-4}v^{n-3}x^n\notag\\
&=F(x,v)-x^3-x^2(1+v)G(x)-\frac{2v^2x^5}{1-2vx}. \label{4213lem2e4}
\end{align}

On the other hand, multiplying both sides of \eqref{4213lem1e3} by $v^{i-3}x^n$, and summing over all $5 \leq i \leq n-1$, we get
\begin{align}
\sum_{n\geq6}\sum_{i=5}^{n-1}e_{n,i}^*v^{i-3}x^n&=\sum_{n\geq6}\sum_{i=5}^{n-1}\sum_{m=1}^{i-3}e_{n-m,i-m}^*v^{i-3}x^n\notag\\
&\quad+\sum_{n\geq6}\sum_{i=5}^{n-1}\sum_{k=i+1}^n\sum_{j=0}^{k-i-1}\sum_{m=2}^{i-2}\binom{k-i-1}{j}e_{n-m-j,k-m-j}^*v^{i-3}x^n. \label{4213lem2e5}
\end{align}
Working separately on the last two sums, we have that the first sum is given by
\begin{align*}
\sum_{n\geq6}\sum_{i=5}^{n-1}\sum_{m=1}^{i-3}e_{n-m,i-m}^*v^{i-3}x^n&=\sum_{n \geq 6}\sum_{m=1}^{n-4}\sum_{i=m+3}^{n-1}e_{n-m,i-m}^*v^{i-3}x^n-v\sum_{n\geq6}e_{n-1,3}^*x^n\\
&=\sum_{m\geq1}\sum_{n \geq m+4}\sum_{i=m+3}^{n-1}e_{n-m,i-m}^*v^{i-3}x^n-v\sum_{n\geq5}e_{n-1,3}^*x^n\\
&=\sum_{m\geq1}\sum_{n\geq4}\sum_{i=3}^{n-1}e_{n,i}^*v^{i+m-3}x^{n+m}-v\sum_{n\geq5}e_{n-3}x^n
\end{align*}
\begin{align*}
\qquad\qquad\qquad\qquad&=\sum_{n\geq4}\sum_{i=3}^{n-1}e_{n,i}^*v^{i-3}x^n\cdot \frac{vx}{1-vx}-vx^3G(x)\\
&=\frac{vx}{1-vx}\left(F(x,v)-x^3-\frac{vx^4}{1-2vx}\right)-vx^3G(x).
\end{align*}

For the second sum on the right side of \eqref{4213lem2e5}, we get
\begin{align*}
&\sum_{n\geq6}\sum_{i=5}^{n-1}\sum_{k=i+1}^n\sum_{j=0}^{k-i-1}\sum_{m=2}^{i-2}\binom{k-i-1}{j}e_{n-m-j,k-m-j}^*v^{i-3}x^n\\
&=\sum_{n \geq 6}\sum_{j=0}^{n-6}\sum_{i=5}^{n-j-1}\sum_{m=2}^{i-2}\sum_{k=i+j+1}^n\binom{k-i-1}{j}e_{n-m-j,k-m-j}^*v^{i-3}x^n\\
&=\sum_{j\geq0}\sum_{n\geq j+6}\sum_{m=2}^{n-j-3}\sum_{i=m+2}^{n-j-1}\sum_{k=i+j+1}^n\binom{k-i-1}{j}e_{n-m-j,k-m-j}^*v^{i-3}x^n-S,
\end{align*}
where $S=v\sum_{j\geq0}\sum_{n\geq j+6}\sum_{k=j+5}^n\binom{k-5}{j}e_{n-j-2,k-j-2}x^n$, upon adding and subtracting to the summation the case when $i=4$.  Considering separately the cases when $m\geq 3$ or $m=2$, one obtains
\begin{align*}
&\sum_{j\geq0}\sum_{m\geq3}\sum_{n\geq j+m+3}\sum_{i=m+2}^{n-j-1}\sum_{k=i+j+1}^n\binom{k-i-1}{j}e_{n-m-j,k-m-j}^*v^{i-3}x^n\\
&\quad+\sum_{j\geq0}\sum_{n\geq j+6}\sum_{i=4}^{n-j-1}\sum_{k=i+j+1}^n\binom{k-i-1}{j}e_{n-j-2,k-j-2}^*v^{i-3}x^n-S\\
&=\sum_{j\geq0}\sum_{m\geq3}\sum_{i \geq m+2}\sum_{n\geq i+j+1}\sum_{k=i+j+1}^n\binom{k-i-1}{j}e_{n-m-j,k-m-j}^*v^{i-3}x^n\\
&\quad+\sum_{j\geq0}\sum_{i \geq 5}\sum_{n \geq i+j+1}\sum_{k=i+j+1}^n\binom{k-i-1}{j}e_{n-j-2,k-j-2}^*v^{i-3}x^n\\
&=\sum_{j\geq0}\sum_{m\geq3}\sum_{i \geq2}\sum_{n \geq i+1}\sum_{k=i+1}^n\binom{k+j-i-1}{j}e_{n,k}^*v^{i+m-3}x^{n+m+j}\\
&\quad+\sum_{j\geq0}\sum_{i \geq 5}\sum_{n \geq i-1}\sum_{k=i-1}^n\binom{k+j-i+1}{j}e_{n,k}^*v^{i-3}x^{n+j+2}\\
&=\frac{x^3}{1-vx}\sum_{i\geq2}\sum_{n\geq i+1}\sum_{k=i+1}^ne_{n,k}^*v^ix^n\cdot\frac{1}{(1-x)^{k-i}}+\sum_{i\geq 5}\sum_{n \geq i-1}\sum_{k=i-1}^ne_{n,k}^*v^{i-3}x^{n+2}\cdot\frac{1}{(1-x)^{k-i+2}}.
\end{align*}

Interchanging the sum in $i$ with the others now gives
\begin{align*}
&\frac{x^3}{1-vx}\sum_{n\geq 3}\sum_{k=3}^ne_{n,k}^*\frac{x^n}{(1-x)^k}\sum_{i=2}^{k-1}(v(1-x))^i+\sum_{n\geq4}\sum_{k=4}^ne_{n,k}^*\frac{x^{n+2}}{v(1-x)^k}\sum_{i=5}^{k+1}(v(1-x))^{i-2}\\
&=\frac{x^3}{1-vx}\sum_{n\geq 3}\sum_{k=3}^ne_{n,k}^*\frac{x^n}{(1-x)^k}\cdot\frac{(v(1-x))^2-(v(1-x))^k}{1-v+vx}\\
&\quad+\sum_{n\geq3}\sum_{k=3}^ne_{n,k}^*\frac{x^{n+2}}{v(1-x)^k}\cdot\frac{(v(1-x))^3-(v(1-x))^k}{1-v+vx}\\
&=\frac{v^2x^3}{(1-vx)(1-v+vx)}\left(\frac{1}{1-x}F\left(x,\frac{1}{1-x}\right)-vF(x,v)\right)+\frac{v^2x^2}{1-v+vx}\left(F\left(x,\frac{1}{1-x}\right)-F(x,v)\right).\\
\end{align*}
Comparing \eqref{4213lem2e4} and \eqref{4213lem2e5}, and combining the prior calculations, we have
\begin{align*}
&F(x,v)-x^3-x^2(1+v)G(x)-\frac{2v^2x^5}{1-2vx}\\
&=\frac{vx}{1-vx}\left(F(x,v)-x^3-\frac{vx^4}{1-2vx}\right)-vx^3G(x)\\
&\quad+\frac{v^2x^3}{(1-vx)(1-v+vx)}\left(\frac{1}{1-x}F\left(x,\frac{1}{1-x}\right)-vF(x,v)\right)+\frac{v^2x^2}{1-v+vx}\left(F\left(x,\frac{1}{1-x}\right)-F(x,v)\right).
\end{align*}
The last equality may be rewritten as \eqref{4213lem2e3}, which completes the proof.
\end{proof}

\begin{theorem}\label{4213th}
We have
\begin{align}
E(x,v)&=\frac{x^3(1-v-3vx+2v^2x+v^2x^2)+x^2(1-3vx+v^2x+v^2x^2)G(x)}{(1-vx)^2(1-v-3vx+3v^2x+v^2x^2)},\label{4213the0}
\end{align}
where $G(x)=\frac{x(1-x-\sqrt{1-6x+5x^2})}{2(1-x)}$.  In particular, the formula for the generating function $\sum_{n\geq2}p_n(\tau)x^n$, where $\tau \in \{4213,4312\}$, is given by $G(x)=E(x,1)+\frac{x^2}{1-x}$.
\end{theorem}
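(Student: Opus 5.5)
The plan is to solve the system of Lemma \ref{4213lem2} by the kernel method applied to \eqref{4213lem2e3}. The unknowns are $F(x,v)$, the single-variable function $G(x)$, and the one specialization $F\left(x,\frac{1}{1-x}\right)$.

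First I will clear denominators in \eqref{4213lem2e3} by multiplying through by $(1-vx)(1-v+vx)$. The coefficient of $F(x,v)$ then becomes the kernel
$$K(v)=(1-2vx)(1-v+vx)+v^2x^2=1-v-3vx+3v^2x+v^2x^2,$$
which is exactly the factor appearing in the denominator of \eqref{4213the0}. The right-hand side becomes
$$(1-vx)(1-v+vx)\bigl(x^3(1-vx)+x^2(1+v-vx)G(x)\bigr)+\frac{vx^2}{1-x}\bigl(1-(1-vx)(1-v+vx)\bigr)F\left(x,\tfrac{1}{1-x}\right).$$

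Next, $K(v)$ is quadratic in $v$, and one of its roots, $v_0=v_0(x)$, is a formal power series with $v_0=1+O(x)$; the other root blows up like $x^{-1}$. Substituting $v=v_0$ into the cleared equation is legitimate because $F(x,v_0)$ converges as a power series in $x$. This kills the left side and gives one linear relation between $G(x)$ and $F\left(x,\frac{1}{1-x}\right)$.

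To close the system I need a second relation. I will set $v=1$ in \eqref{4213lem2e2} to get $E(x,1)=\frac{1}{(1-x)^2}F\left(x,\frac{1}{1-x}\right)$, and then \eqref{4213lem2e1} gives $G(x)=\frac{x^2}{1-x}+\frac{1}{(1-x)^2}F\left(x,\frac{1}{1-x}\right)$. This expresses $F\left(x,\frac{1}{1-x}\right)$ through $G$. Combining it with the kernel relation at $v=v_0$, I solve the resulting linear equation for $G$. The main obstacle is verifying that the answer simplifies to $G(x)=\frac{x(1-x-\sqrt{1-6x+5x^2})}{2(1-x)}$. I expect $v_0$ to involve $\sqrt{1-6x+5x^2}$, since the discriminant of $K$ is $(1+3x)^2-4x(3+x)=1-6x+5x^2$. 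So the plan is to write $v_0=\frac{1+3x-\sqrt{1-6x+5x^2}}{2x(3+x)}$ explicitly and simplify, using $K(v_0)=0$ to reduce powers of $v_0$. As a consistency check, the resulting $G$ should agree with Corollary \ref{3214cor}, and its coefficients can be compared with small values: $e_3=2$ and $e_4=5$.

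Finally, I will substitute the known $G$ and $F\left(x,\frac{1}{1-x}\right)=(1-x)^2\left(G-\frac{x^2}{1-x}\right)$ back into \eqref{4213lem2e3} to obtain $F(x,v)$ explicitly as a rational function of $v$ with denominator $K(v)$. Then I will compute $E(x,v)=\frac{1}{(1-vx)^2}F\left(x,\frac{v}{1-vx}\right)$ via \eqref{4213lem2e2}. Under the substitution $v\mapsto \frac{v}{1-vx}$, the kernel factor transforms, after clearing $(1-vx)^2$, into the denominator shown in \eqref{4213the0}. Routine algebra should then yield the stated numerator. The claim for $4312$ follows because Lemma \ref{4213lem1} gives the same recurrences and initial conditions for that pattern.
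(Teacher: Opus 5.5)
Your strategy is the same as the paper's: eliminate $F\left(x,\frac{1}{1-x}\right)$ via \eqref{4213lem2e1} and \eqref{4213lem2e2} at $v=1$, kill the coefficient of $F(x,v)$ in \eqref{4213lem2e3} with a kernel root, solve for $G$, back-substitute, then apply \eqref{4213lem2e2}. However, the central step fails as written because the kernel is miscomputed. In fact
\[
(1-2vx)(1-v+vx)+v^2x^2=1-v-vx+2v^2x-v^2x^2,
\]
not $1-v-3vx+3v^2x+v^2x^2$. The polynomial you wrote is $(1-vx)^2$ times the true kernel evaluated at $\frac{v}{1-vx}$, so it is the kernel of $E$, not of $F$. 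That is why it matches the denominator of \eqref{4213the0}. It is also why your first and last paragraphs contradict each other: applying $v\mapsto\frac{v}{1-vx}$ to your $K$ gives $1-v-5vx+4v^2x+5v^2x^2$, not that denominator. Consequently your root $\frac{1+3x-\sqrt{1-6x+5x^2}}{2x(3+x)}=1+x^2+\cdots$ does not annihilate the left side of \eqref{4213lem2e3}. It equals $v_0/(1+v_0x)$ for the correct root $v_0$. If you drop the left side anyway, you get $G=2x^3+\cdots$, which contradicts $e_2=1$.

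The repair is straightforward. The correct root, with the same discriminant $1-6x+5x^2$, is $v_0=\frac{1+x-\sqrt{1-6x+5x^2}}{2x(2-x)}=1+x+2x^2+\cdots$. Write $P(v)=(1-vx)(1-v+vx)$ and substitute $F\left(x,\frac{1}{1-x}\right)=(1-x)^2G(x)-x^2(1-x)$. The cleared equation then reads
\[
\bigl(P(v)-vx(1-v)\bigr)F(x,v)=x^3P(v)-vx^4+x^2\bigl(P(v)+v(1-x)\bigr)G(x).
\]
At $v=v_0$ you can use $P(v_0)=v_0x(1-v_0)$ directly. This collapses the computation to $G=\frac{v_0x^2}{1-v_0x}$, which rationalizes to the stated closed form without any reduction of powers of $v_0$. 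After that, your back-substitution and the change of variables $v\mapsto\frac{v}{1-vx}$ in \eqref{4213lem2e2} go through. They produce the factor $1-v-3vx+3v^2x+v^2x^2$ in the denominator of \eqref{4213the0}, with the remaining factor $(1-vx)^2$ coming from $P\left(\frac{v}{1-vx}\right)$ in the numerator.
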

\begin{proof}
The second statement follows from the first, upon taking $v=1$, so we need only prove the first.  By \eqref{4213lem2e2} when $v=1$ and \eqref{4213lem2e1}, we have
\begin{equation}\label{4213the1}
F\left(x,\frac{1}{1-x}\right)=(1-x)^2G(x)-x^2(1-x).
\end{equation}
Substituting \eqref{4213the1} into \eqref{4213lem2e3}, and simplifying, we get
\begin{align}
\left(1-\frac{vx}{1-vx}+\frac{v^2x^2}{(1-vx)(1-v+vx)}\right)F(x,v)&=\frac{x^3(1-vx)(1-v+vx)-vx^4}{(1-vx)(1-v+vx)}\notag\\
&\quad+\frac{x^2((1-vx)(1-v+vx)+v-vx)}{(1-vx)(1-v+vx)}G(x). \label{4213the2}
\end{align}
Taking $v=v_0=\frac{1+x-\sqrt{1-6x+5x^2}}{2x(2-x)}$ in \eqref{4213the2} is seen to cancel out the left-hand side, and solving for $G(x)$, we find
$$G(x)=-\frac{x(1-v_0x)(1-v_0+v_0x)-v_0x^2}{(1-v_0x)(1-v_0+v_0x)+v_0(1-x)}.$$
Using the relation $(1-v_0x)(1-v_0+v_0x)=v_0x(1-v_0)$ gives
$$G(x)=-\frac{v_0x^2(1-v_0)-v_0x^2}{v_0x(1-v_0)+v_0(1-x)}=\frac{v_0x^2}{1-v_0x}=\frac{x(1-x-\sqrt{1-6x+5x^2})}{2(1-x)}.$$
Note that the choice of the minus sign prior to the radical term in $v_0$ is dictated by the requirement that $G(x)/x$ evaluated at $x=0$ be zero. Substituting this expression for $G(x)$ back into \eqref{4213the2}, solving for $F(x,v)$ and using \eqref{4213lem2e2} yields \eqref{4213the0}, which completes the proof.
\end{proof}

\section{Further results}

In this section, we discuss further results concerning the avoidance of a single pattern of length four by members of $\mathcal{P}_n$.  We start with the following general result, wherein $\tau'=1(\tau+1)$ and $\tau+1$ is obtained from $\tau$ by adding one to each entry of $\tau$.

\begin{proposition}\label{genprop}
Let $\tau$ be a pattern.  Then $p_n(\tau)=p_n(\tau')$ for all $n \geq 1$ if and only if $\tau$ does not start with $1$.
\end{proposition}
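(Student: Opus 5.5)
The plan has two directions. For the ``if'' direction I will construct a bijection, and for the ``only if'' direction I will find a single value of $n$ where the counts differ.

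First, the ``if'' direction. Suppose $\tau$ (of length $k$) does not start with $1$. Every $\pi\in\mathcal{P}_n$ starts with $1$. I claim that $\pi$ contains $\tau'=1(\tau+1)$ if and only if the word $\pi_2\cdots\pi_n$ contains $\tau$. One way, an occurrence of $\tau'$ cannot use $\pi_1=1$ in any role but the `1'. If it does use $\pi_1$, the remaining letters form a $\tau$ inside $\pi_2\cdots\pi_n$. If it does not, then the $\tau'$ occurrence already lies in $\pi_2\cdots\pi_n$, and deleting its first letter leaves a $\tau$ there. The other way, any $\tau$ in $\pi_2\cdots\pi_n$ can be prefixed by $\pi_1=1$, since $1$ is smaller than every letter, giving a $\tau'$.

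Second, I describe the avoiders of $\tau$. Since $\tau_1\neq 1$, an occurrence of $\tau$ in $\pi$ cannot use $\pi_1=1$ as its first letter, because $1$ is the global minimum. So $\pi$ avoids $\tau$ if and only if $\pi_2\cdots\pi_n$ avoids $\tau$. Combining this with the first step, $\mathcal{P}_n(\tau)=\mathcal{P}_n(\tau')$ as sets. In particular $p_n(\tau)=p_n(\tau')$, and the bijection is simply the identity.

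Third, the ``only if'' direction. Suppose $\tau=1\sigma$ with length $k$. Then $\tau'$ has length $k+1$ and begins $12$. I would compare at $n=k$. The identity permutation $12\cdots k$ lies in $\mathcal{P}_k$, and so does $\tau$ itself, provided its descent bottoms increase. Since $\tau'$ has length $k+1$, every member of $\mathcal{P}_k$ avoids it, so $p_k(\tau')=|\mathcal{P}_k|=B_{k-1}$. It therefore suffices to show that some member of $\mathcal{P}_k$ contains $\tau$, that is, that $\tau$ itself lies in $\mathcal{P}_k$.

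This is the main obstacle, because $\tau$ need not be a flattened partition. If $\tau\notin\mathcal{P}_k$, I would instead pass to $n=k+1$ and look for a member of $\mathcal{P}_{k+1}$ that contains $\tau$ but avoids $\tau'$. A natural candidate is to place $k+1$ appropriately; more robustly, take $\pi=1\,(\sigma'+1)$, where $\sigma'$ is a rearrangement that is forced into flattened form by inserting large letters. Alternatively, I could use a general counting inequality. One might try to show that the occurrences of $\tau'$ inject into the occurrences of $\tau$, but that comparison goes the wrong way, so I would not pursue it.

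Instead I would argue via $\mathcal{P}_n(\tau')\supseteq\mathcal{P}_n(\tau)$, which holds because any occurrence of $\tau'$ contains an occurrence of $\tau$. Equality of counts would then mean every $\tau$-containing flattened partition also contains $\tau'$. To refute this, take a $\tau$-containing member $\pi\in\mathcal{P}_n$ of minimal length. In any $\tau'$-occurrence inside $\pi$, the letters playing the roles after the first form a $\tau$, and that $\tau$ does not start at $\pi_1$. Deleting $\pi_1$ together with any letters smaller than the start of that $\tau$, and then standardizing, should produce a shorter flattened partition that still contains $\tau$. This contradicts minimality. The delicate point is checking that these deletions preserve the increasing descent-bottom property, and I expect that verification to be the crux of the proof.
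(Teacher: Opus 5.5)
There is a genuine gap in the ``only if'' direction.

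Your ``if'' direction is correct and is the paper's argument. No occurrence of $\tau$ can use the letter $1$, so prefixing $1$ turns any $\tau$ into a $\tau'$, giving $\mathcal{P}_n(\tau)=\mathcal{P}_n(\tau')$.

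For the ``only if'' direction, your final strategy is also the paper's: take $m$ minimal such that some member of $\mathcal{P}_m$ contains $\tau$, and show that nothing in $\mathcal{P}_m$ contains $\tau'$. But the reduction you propose fails, and it is the step that carries the proof. Removing all letters smaller than the letter $x$ playing the `2' of $\tau'$ need not leave a flattened partition. Take $\tau=12$, $\tau'=123$ and $\pi=1423\in\mathcal{P}_4$. The only $123$ is $1,2,3$, so $x=2$. Deleting the letters below $2$ leaves $423$, which standardizes to $312\notin\mathcal{P}_3$. In general this breaks whenever an earlier run contains a letter larger than $x$, and nothing in your argument uses minimality to prevent that.

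Your fallback of checking $n=k+1$ also fails in general. For $\tau=15432$, a flattened partition containing $\tau$ needs four runs, three of them of length at least two. So the shortest one is $1726354$, of length $7$, and $p_6(\tau)=p_6(\tau')=B_5$.

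The missing idea is a case split on whether $x$ is a descent bottom.
\begin{itemize}
\item If $x$ is a descent bottom, then $x$ and all later letters of the occurrence lie outside the first run. Deleting the whole first run leaves a word whose first letter (the old second run starter) is its minimum and whose descent bottoms still increase. After standardizing you get a shorter flattened partition containing $\tau$.
\item If $x$ is not a descent bottom, then, as $x\neq 1$, $x$ does not start a run. Deleting $x$ alone cannot create a new descent bottom: if the new adjacent pair is a descent, its bottom was already one. So the result lies in $\mathcal{P}_{m-1}$. There the letter $1$, together with the letters playing `3',\ldots,`$k+1$', is still an occurrence of $\tau=1\sigma$.
\end{itemize}
Either way minimality of $m$ is contradicted, so $p_m(\tau')=B_{m-1}\neq p_m(\tau)$.
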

\begin{proof}
For the sufficiency, suppose $\pi \in \mathcal{P}_n$ contains an occurrence of $\tau$ witnessed by the subsequence $xyz$.  Then $\tau$ not starting with 1 implies $x \neq 1$ and hence $1xyz$ is an occurrence of $\tau'$.  This implies indeed $\mathcal{P}_n(\tau)=\mathcal{P}_n(\tau')$, and hence $p_n(\tau)=p_n(\tau')$, for all $n \geq 1$.

For the necessity, we show that if $\tau$ starts with 1, then there exists $n \geq 1$ such that $p_n(\tau)\neq p_n(\tau')$.  Clearly, we may assume $\tau$ has length at least two.  Let $m$ be the smallest $n$ such that $p_n(\tau)\neq B_{n-1}$, i.e., there exists a member of $\mathcal{P}_n$ that contains an occurrence of $\tau$.  Note that $m$ exists for any pattern $\tau$.  We wish to show $p_m(\tau')=B_{m-1}$, which would establish the necessity.  Suppose, to the contrary, that $p_m(\tau')\neq B_{m-1}$ and let $\pi \in \mathcal{P}_m$ contain an occurrence $s$ of $\tau'$.  We may assume that the role of `1' in $s$ is played by the actual letter $1$ and let $x$ denote the letter of $\pi$ playing the role of the `2' in $s$.  First suppose $x$ corresponds to a descent bottom of $\pi$.  Then deletion of the first run of $\pi$ followed by standardization yields a member of $\mathcal{P}_k$ that contains an occurrence of $\tau$ for some $k<m$, since the letters of $\pi$ excluding its first run would contain $\tau$ in this case.  But then $p_k(\tau)\neq B_{k-1}$, which contradicts the minimality of $m$.  So assume $x$ does not correspond to a descent bottom of $\pi$.  Then it is seen that one may delete $x$ in this case and remain a flattened partition as $x$ does not start a run.  The resulting member of $\mathcal{P}_{m-1}$ contains $\tau$, as witnessed by the remaining letters of $s$ (the first of which is assumed to be 1).  Again, this contradicts the minimality of $m$, and hence we must have $p_m(\tau')=B_{m-1}$, as desired.
\end{proof}

\begin{table}[h!]
	\centering
	\renewcommand{\arraystretch}{1.3}
	\begin{tabular}{|c|c|c|c|}
		\hline
		\textbf{Class} & \textbf{Pattern $\tau$} & \textbf{$p_n(\tau)$} & \textbf{OEIS sequence} \\
		\hline
		$1$ & $1234$ &$1, 1, 2, 4, 5, 3, 0, 0, 0, 0,  \ldots.$ & Trivial \\
		\hline
		$2$ & $1243$ & $1, 1, 2, 4, 7, 11, 16, 22, 29, 37, \ldots.$ & \texttt{A152947} \\
		\hline
		$3$ & \begin{tabular}{c}$1324$\\$1423$\end{tabular} & $1, 1, 2, 4, 8, 16, 32, 64, 128, 256, \ldots.$ & \texttt{A011782} \\
		\hline
		$4$ & $1342$ &	$1, 1, 2, 4, 9, 21, 51, 127, 323, 835, \ldots.$ & \texttt{A001006} \\
		\hline
		$5$ & $3124$ &	$1, 1, 2, 5, 14, 40, 113, 315, 871, 2400, \ldots $ & \texttt{Unknown} \\
		\hline
        $6$ & $4123$ &	$1, 1, 2, 5, 14, 40, 114, 324, 920, 2612, \ldots$ &		\texttt{A159035}  \\
		\hline
		$7$ & $2134$ &	$1, 1, 2, 5, 14, 40, 116, 341, 1014, 3045, \ldots$ & \texttt{A273900}\\
		\hline
		$8$ & \begin{tabular}{c}$1432$\\$2143$\\$2314$\\$2413$\\$3142$\\$3412$\\$4132$ \end{tabular}  &
		$1, 1, 2, 5, 14, 42, 132, 429, 1430, 4862, \ldots.$ & \texttt{A000108} \\
		\hline
		$9$ & $2341$ & $1, 1, 2, 5, 14, 44, 148, 528, 1972, 7647, \ldots $&	\texttt{A307786} (?) \\
		\hline
       $10$ & \begin{tabular}{c}$3214$\\$4213$\\$4312$\end{tabular} &
		$1, 1, 2, 5, 15, 51, 188, 731, 2950, 12235, \ldots$ &
		\texttt{A007317} \\
		\hline
		$11$ & \begin{tabular}{c}$2431$\\$3241\,(?)$\end{tabular} &	$1, 1, 2, 5, 15, 51, 190, 758, 3187, 13963, \ldots$ & \texttt{Unknown} \\
		\hline
       $12$ & $3421$ &	$1, 1, 2, 5, 15, 51, 190, 759, 3203, 14129, \ldots $ &
		\texttt{Unknown} \\
		\hline
		
		$13$ & $4231$ &	$1, 1, 2, 5, 15, 51, 190, 759, 3203, 14130, \ldots$ &\texttt{Unknown} \\
		\hline
      $14$ & $4321$ &	$1, 1, 2, 5, 15, 52, 202, 858, 3910, 18846, \ldots$ & \texttt{Unknown} \\
		\hline
	\end{tabular}
	\caption{Wilf-equivalence classes and avoidance sequences (for $1 \leq n \leq 10$) for patterns of length four}
	\label{tab:we-classes}
\end{table}

In Table \ref{tab:we-classes} above, we present the Wilf-equivalence classes corresponding to the avoidance of a single pattern of length four by distinct flattened partitions.  They are arranged in increasing order with respect to the growth rate of the corresponding avoidance counting sequences.  Combining results from Table \ref{tab:length3-patterns} above with Proposition \ref{genprop} accounts for the facts witnessed in Table \ref{tab:we-classes} that $p_n(1324)=p_n(1423)=2^{n-2}$ for all $n \geq 2$, $p_n(1342)=M_{n-1}$ and $p_n(1432)=C_{n-1}$ for $n \geq 1$. Further, we have $p_n(4132)=p_n(321)=C_{n-1}$, for if $\pi \in \mathcal{P}_n$ contains an occurrence of 321 witnessed by the subsequence $xyz$, then $xwyz$ is a 4132, where $w$ denote the first letter in the run containing $y$.  Combining these observation with Theorems \ref{Catalan} and \ref{trCatalan} above accounts for all of the Wilf-classes of size greater than one except for the possible equivalence observed between 2431 and 3241.  We state this last equivalence as a conjecture below, which has been confirmed up to $n=14$ numerically.  All of the other Wilf-classes are singletons whose various counting sequences are differentiated on numerical grounds requiring at most $n=10$.  Further, we have that 2341 is a singleton class wherein $p_n(2341)$ appears to correspond to the $(n-1)$-st term of entry A307786 in \cite{Sloane} for all $n\geq 1$, which has been confirmed up to $n=15$.

\begin{conjecture}\label{conj1} If $n \geq 1$, then $p_n(2431)=p_n(3241)$.
\end{conjecture}

\begin{conjecture}If $n \geq 1$, then $p_n(2341)=A307786(n-1)$.
\end{conjecture}

Note that the results contained herein combined with Conjecture \ref{conj1} would complete the Wilf-classification for the avoidance of a single pattern of length four.

One may verify directly the first six terms of the sequence $p_n(1234)$ given in Table \ref{tab:we-classes} and that $p_n(1234)=0$ for all $n \geq 7$.  Indeed, for the pattern $\tau=12\cdots r$, where $r \geq 2$, we have that $\binom{r}{2}$ is the largest $n$ such that $p_n(\tau)\neq0$.  To see this, first note that $\pi=1\alpha_{r-2}2\alpha_{r-3}3\cdots\alpha_1(r-1)$, where
$$\alpha_j=\left(r+\binom{j}{2}\right)\left(r+\binom{j}{2}+1\right)\cdots\left(r+\binom{j+1}{2}-1\right), \qquad 1 \leq j \leq r-2,$$
is seen to avoid $\tau$.  Further, it is apparent from the construction of $\pi$ that any flattened partition of length greater than $|\pi|=\binom{r}{2}$ must contain $\tau$, and hence $p_n(\tau)=0$ for all $n>\binom{r}{2}$.  By an elementary enumeration, one can show $p_n(1243)=\binom{n-1}{2}+1$ for all $n \geq 1$, and hence we see that the pattern pairs $123/1234$ and $132/1243$ demonstrate the necessity of Proposition \ref{genprop}.

Concerning the determination of $p_n(\tau)$ in other cases where $\tau$ comprises its own Wilf-class, the problems are of varying degrees of difficulty.  The patterns ending in 1 seem to be more difficult to enumerate than the others, especially 4321.  We conclude with the following result where we enumerate the cases 3124, 4123 and 2134, which completes the enumeration of all the patterns of length four not ending in 1.  We again make use of systems of functional equations in establishing these cases.

\begin{theorem}\label{othercases}
Let $r_n=p_n(3124)$, $s_n=p_n(4123)$ and $t_n=p_n(2134)$ for $n \geq 1$.  Then we have
\begin{equation}\label{3124te1}
r_n=5r_{n-1}-8r_{n-2}+5r_{n-3}, \qquad n \geq 5,
\end{equation}
\begin{equation}\label{4123te1}
s_n=4s_{n-1}-4s_{n-2}+2s_{n-3}, \qquad n \geq 5,
\end{equation}
and
\begin{equation}\label{2134te1}
\sum_{n\geq 2}t_nx^n=\frac{(1-x)(1-3x+x^2-x^3)-(1-x)^2\sqrt{1-4x+2x^2+x^4}}{2x^2},
\end{equation}
where $r_n$ and $s_n$ satisfy the same initial conditions $r_1=r_2=1$, $r_3=2$, $r_4=5$.
\end{theorem}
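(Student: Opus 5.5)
We will treat the three patterns separately. For each we refine $p_n(\tau)$ by a parameter that makes a decomposition possible, write recurrences, pass to generating functions, and solve, as in Sections 2--4. In each case the claimed formula is then checked against the terms in Table \ref{tab:we-classes}.

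\emph{The pattern 3124.} Write $\pi\in\mathcal{P}_n(3124)$ as $\pi=1\alpha i2\beta$, where $i$ is the first descent top (if any) and $\alpha$ is increasing in $[2,i-1]$. Every letter $c\geq3$ to the left of 2 then forces the letters to the right of 2 that exceed $c$ to avoid forming $c\,2\,\cdot\,\cdot$ with an ascent above $c$.

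Our approach is to refine by the first descent top $i$, together with the number of letters of $[3,i]$ lying before 2. Deleting the first run should give a recurrence of bounded depth. The reason is that a letter before 2 acting as `3' forces all later letters larger than it to form a decreasing sequence. In a flattened partition such a sequence has length at most two, since there are no two consecutive descents and the descent bottoms increase.

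We expect this to give a rational generating function whose denominator is $1-5x+8x^2-5x^3$, up to a numerator correction affecting only $n\le4$. We would verify this by deriving the functional equation and checking $r_5=14$, $r_6=40$ and $r_7=113$.

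\emph{The pattern 4123.} This follows the same scheme. A letter before 2 acting as `4' forces every ascent after 2 below it to have length less than two. Refining by the first descent top $i$ and the set of letters of $[3,i-1]$ lying after the first run, we expect a finite linear system. Its solution should be rational with denominator $1-4x+4x^2-2x^3$, which gives \eqref{4123te1}.

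\emph{The pattern 2134.} Here we expect an algebraic (square-root) generating function, so the kernel method is needed. We would refine by the second letter $i$, as was done for 2143 and 3142. Write $\pi=1i\alpha2\beta$ with $\alpha$ increasing in $[i+1,n]$. Any letter $a\geq i$ to the left of a smaller letter $b\geq2$ forbids an ascent above $a$ after $b$.

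We would introduce an auxiliary array counting members that start $1i2$, analogous to $a^*_{n,i}$, and derive two functional equations in $x$ and a catalytic variable $v$. We then apply the kernel method, choosing the root $v_0(x)$ that is a power series. This should yield a quadratic for $T(x)=\sum_{n\ge2}t_nx^n$, namely
$$x^2T^2-(1-x)(1-3x+x^2-x^3)T+c(x)=0$$
for a polynomial $c(x)$ that we would identify. Its discriminant should be $(1-x)^4(1-4x+2x^2+x^4)$, and solving the quadratic gives \eqref{2134te1}.

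The main obstacle is the 2134 case. There, obtaining a correct recurrence for the $1i2$-restricted array is delicate, because letters larger than $i$ occurring before 2 interact with later ascents. The kernel computation is also messy. If the direct route fails, we would first try instead refining by the last letter, as was done for 2413.
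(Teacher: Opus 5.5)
\textbf{There is a genuine gap: the proposal never derives a recurrence for any of the three patterns.} The denominators $1-5x+8x^2-5x^3$ and $1-4x+4x^2-2x^3$, and the quadratic for $T$, are taken from the statement you are trying to prove. For instance, the $T$-free coefficient $c(x)$ is forced by \eqref{2134te1} to be $x^2(1-x)^2$, so ``identifying'' it adds nothing. Matching $r_5,r_6,r_7$ against Table~\ref{tab:we-classes} cannot establish a linear recurrence unless rationality of bounded degree has been proved independently.

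\textbf{The one structural argument you give for 3124 is wrong.} The configuration you describe, $c\,2$ followed by an ascent above $c$, is an occurrence of 2134, not 3124. For 3124 the forbidden configuration is $c\,2\,b\,d$ with $2<b<c<d$. So a letter $c\geq3$ before 2 does not force the later letters exceeding $c$ to decrease: $1324\cdots n$ avoids 3124 for every $n$, yet $4,5,\ldots,n$ follow 3 in increasing order. Your claim that decreasing sequences in flattened partitions have length at most two is also false. For example, $1427365\in\mathcal{P}_7$ has $7,6,5$ after 2, and $15243$ contains $543$. Having no two consecutive descents restricts adjacent letters only. Hence your ``bounded depth'' reason for a rational generating function collapses. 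For 4123, refining by a \emph{set} of letters of $[3,i-1]$ does not evidently give a finite linear system.

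\textbf{How the paper proceeds.} For 3124 it refines by the last two letters $i,j$, exactly as for 2314 and 3214, with separate recurrences for $j<i$, $j=i+1$ and $j>i+1$. Translating these to $R_{\pm}(x,u,v)$ and eliminating $R_-$ gives an equation expressing $R_+(x,u,v)$ explicitly in terms of two unknowns, $R_+(x,u,1)$ and $R_+(x,u,u)$. Setting $v=1$ and $v=u$ then gives a $2\times2$ linear system. This is the actual source of rationality, and no kernel method is needed.

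For 4123 and 2134 the paper only points to the arrays used for 4213 and 2143, respectively:
\begin{itemize}
\item for 4123, the first descent top $i$ together with the restricted array of members beginning $1i2$;
\item for 2134, the second letter $i$ together with the $1i2$-restricted array.
\end{itemize}
Your 2134 plan is in line with this. Your 4123 plan would be too if you replaced the set-valued parameter with the $1i2$-restricted array. In every case, though, the proof consists of writing down these recurrences, justifying that each decomposition is both necessary and sufficient for avoidance, and solving the resulting equations. None of that is present in your proposal.
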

\begin{proof}
We demonstrate only \eqref{3124te1} and outline its proof as follows.  Given $n \geq 2$ and $i,j \in [n]$ with $i \neq j$, let $r_{n,i,j}$ denote the number of members of $\mathcal{P}_n(3124)$ ending in $i,j$.  Then one can argue combinatorially the recurrences
\begin{align}
r_{n,i,j}&=\sum_{m=0}^{i-j-1}\sum_{s=1}^{j-1}r_{n-m-1,s,i-m-1}, \quad 2 \leq j< i \leq n, \label{2134e1}\\
r_{n,i,i+1}&=\sum_{s=1,s\neq i}^{n-1}r_{n-1,s,i}, \qquad 2 \leq i \leq n-1, \label{2134e2}\\
r_{n,i,j}&=\sum_{k=j}^n\sum_{s=1}^{i-1}r_{n-k+i,s,i}, \qquad 2 \leq i<j-1\leq n-1,\label{2134e3}
\end{align}
with $r_{2,1,2}=1$, $r_{2,2,1}=0$ and $r_{n,i,1}=r_{n,1,j}=0$ for all $i$ and $j$ if $n \geq 3$. Define the generating functions $$R_+(x,u,v)=\sum_{n\geq2}\sum_{i=1}^{n-1}\sum_{j=i+1}^{n}r_{n,i,j}u^{i-1}v^{j-i-1}x^n$$ and $$R_-(x,u,v)=\sum_{n\geq2}\sum_{i=2}^n\sum_{j=1}^{i-1}r_{n,i,j}u^{i-1}v^{j-i-1}x^n.$$
Translating \eqref{2134e1}--\eqref{2134e3}, we have
\begin{align}
R_+(x,u,v)&=x^2+ux(R_+(x,u,u)+R_-(x,u,u))+\frac{uvx^2}{(1-x)(1-vx)}R_+(x,u,u),\label{eq1p1}\\
R_-(x,u,v)&=\frac{u^2x}{v(1-v)(v-ux)}(R_+(x,u,u/v)-vR_+(x,u,u)).\label{eq1p2}
\end{align}

Using \eqref{eq1p2} in \eqref{eq1p1} yields
\begin{align}
R_+(x,u,v)&=x^2+\frac{ux^2}{(1-u)(1-x)}(R_+(x,u,1)-uR_+(x,u,u))+uxR_+(x,u,u)\nonumber\\
&\quad+\frac{uvx^2}{(1-x)(1-vx)}R_+(x,u,u).\label{eqp3}
\end{align}
First letting $v=1$ and then $v=u$ in \eqref{eqp3}, one obtains a system of equations in the quantities $R_+(x,u,1)$ and $R_+(x,u,u)$. Solving this system yields
\begin{align*}
R_+(x,u,1)&=-\frac{x^2(x-1)(u^2 x^2 + u x^3 - 3u x^2 + u x - x^2 + 2x - 1)}{u^2 x^4 - u x^4 + 4u x^3 - 5u x^2 + x^3 + 2u x - 3x^2 + 3x - 1},\\
R_+(x,u,u)&=-\frac{x^2(x-1)^3(u x - 1)}
{u^2 x^4 - u x^4 + 4u x^3 - 5u x^2 + x^3 + 2u x - 3x^2 + 3x - 1}.
\end{align*}
Taking $v=u$ in \eqref{eq1p2}, and using these last two expressions, we find
$$R_-(x,u,u)=\frac{x^3(u x^3 - 3u x^2 + u x - x^2 + 2x - 1)}
{u^2 x^4 - u x^4 + 4u x^3 - 5u x^2 + x^3 + 2u x - 3x^2 + 3x - 1}.$$

By the definitions, the generating function $\sum_{n\geq1}r_nx^n$ is given by
$$x+R_+(x,1,1)+R_-(x,1,1)=\frac{x(2x-1)(x-1)^2}{5x^3-8x^2+5x-1},$$
which implies recurrence \eqref{3124te1} and recovers the initial values of $r_n$.  Comparable proofs may be given for \eqref{4123te1} and \eqref{2134te1}, upon considering the parameters and distributions used above in the respective cases 4213 and 2143.  Note that from \eqref{4123te1} and \eqref{2134te1}, one obtains new combinatorial interpretations of the OEIS sequences A159035 and A273900, respectively.
\end{proof}


\end{document}